\tikzset{
  squiggly/.style={
    decorate,
    decoration={snake, amplitude=0.6mm, segment length=3mm},
  }
}
\newtheorem{theorem}{Theorem}[section]
\newtheorem{proposition}[theorem]{Proposition}
\newtheorem{corollary}[theorem]{Corollary}
\newtheorem{lemma}[theorem]{Lemma}
\theoremstyle{remark}
\newtheorem*{remark*}{Remark}
\theoremstyle{definition}
\newtheorem{definition}[theorem]{Definition}
\newtheorem {remark}[theorem]{Remark}
\newcommand{\bp}{\mathbf{p}}
\newcommand{\bq}{\mathbf{q}}
\newcommand{\bu}{\mathbf{u}}
\newcommand{\bv}{\mathbf{v}}
\newcommand{\bV}{\mathbf{V}}
\newcommand{\bw}{\mathbf{w}}
\newcommand{\bz}{\mathbf{z}}
\newcommand{\be}{\mathbf{e}}
\newcommand{\bdel}{\bm{\delta}}
\newcommand{\bal}{\bm{\alpha}}
\newcommand{\bbe}{\bm{\beta}}
\newcommand{\bga}{\vec{\gamma}}
\newcommand{\cG}{\mathcal{G}}
\newcommand{\cP}{\mathcal{P}}
\newcommand{\cS}{\mathcal{S}}
\newcommand{\iso}[1]{\langle #1 \rangle}
\begin{document}
\title{loops of handleslides for sutured diagrams}
\author{Qianhe Qin}
\address {Department of Mathematics, Stanford University\\
Stanford, California, 94305, United States of America}
\email {\href{mailto:qqhe@stanford.edu}{qqhe@stanford.edu}}
\maketitle 
\begin{abstract}
We prove that the cut-system complex of a sutured compression body, with vertices representing cut-systems and edges corresponding to handleslides, becomes simply connected when six kinds of 2-cells are attached. Moreover, we define tight Heegaard invariants and show that each admits a unique extension to a strong Heegaard invariant. This gives a new framework for proving naturality results for Floer homology theories associated to sutured manifolds.
\end{abstract}
\section{introduction}
In \cite{JTZ}, Juh{\'a}sz, Thurston, and Zemke showed that all flavors of Heegaard Floer homology, link Floer homology, and sutured Floer homology are natural over \( \mathbb{F}_2 \). These invariants were originally defined only up to isomorphism (cf. Heegaard Floer homology in \cite{OS26, OS27}, link Floer homology in \cite{OSknot, Raknot, OSlink}, and sutured Floer homology in \cite{J}); they are examples of \emph{weak Heegaard invariants} in the terminology of \cite{JTZ}. To obtain more powerful results, we need a naturally associated object, not merely one defined up to isomorphism. In \cite{JTZ}, the authors considered a graph $\mathcal{G}$ that encodes the structure of the ``space of sutured Heegaard diagrams,'' related by certain moves. More precisely, the vertices of $\mathcal{G}$ correspond to isotopy diagrams of sutured manifolds, and the edges indicate whether two such diagrams are related by an $\alpha$/$\beta$-equivalence, a stabilization, or a diffeomorphism. For establishing naturality, they identified a simple generating set for the fundamental group of this graph and showed that Heegaard Floer homology has no monodromy around these generators. To formalize this, they introduced the notion of \emph{strong Heegaard invariants}. 

A strong Heegaard invariant is required to satisfy functoriality when restricted to the subgraph \( \mathcal{G}_{\alpha} \) (resp.\ \( \mathcal{G}_{\beta} \)), whose edges correspond to \( \alpha \)-equivalences (resp.\ \( \beta \)-equivalences). When working with coefficients in $\mathbb{F}_2$, given any \( \beta \)-equivalence \( (\Sigma, \bal, \bbe_1) \rightarrow (\Sigma, \bal, \bbe_2) \), the highest non-zero homological grading part of the Floer homology \( HF^{-}(\Sigma,\bbe_1,\bbe_2, \mathfrak{s}_0)\) is isomorphic to \( \mathbb{F}_2 \) (cf.\ \cite[Lemma 10.2]{JTZ}). Therefore, there exists a unique generator \( \Theta_{\beta_1,\beta_2} \) of this highest graded part, which is used to specify a ``canonical'' isomorphism
\[
\Psi^{\bal}_{\bbe_1\rightarrow\bbe_2} \colon HF^{\circ}(\Sigma, \bal, \bbe_1) \rightarrow HF^{\circ}(\Sigma, \bal, \bbe_2),
\]
as in \cite[Definition 10.3]{JTZ}. The same construction applies to \( \alpha \)-equivalences. However, over \( \mathbb{Z} \), the highest graded part is isomorphic to $\mathbb{Z}$, so there are two possible choices of the generator, and the map cannot be defined directly for \( \alpha \)/\( \beta \)-equivalences. Thus, for naturality over \( \mathbb{Z} \) (or for defining Heegaard Floer stable homotopy types), it is necessary to work with \( \alpha \)/\( \beta \)-handleslides rather than \( \alpha \)/\( \beta \)-equivalences. In fact, in the appendix of \cite{JTZ}, they consider an equivalent description of strong Heegaard invariants for classical single pointed Heegaard diagrams, using handleslides instead of equivalences.

In this paper, we generalize Theorem A.6 from \cite{JTZ} to three-manifolds and links with multiple basepoints, as well as to balanced sutured manifolds, and we provide a full proof of the result sketched in \cite[Appendix A]{JTZ}. Our argument is carried out in the setting of sutured manifolds, and we then apply the result to the classical cases of multi-based $3$-manifolds and multi-based links.

We first focus on handleslides involving only $\alpha$-curves or only $\beta$-curves. For this purpose, it suffices to consider one side of the sutured manifold, namely, a sutured compression body. Let $C(\bdel)$ denote the sutured compression body obtained by compressing the Heegaard surface along an attaching set $\bdel$.

We construct a $2$-complex \(Y_2(C(\bdel))\) following the approach of \cite{JTZ}. The vertices of \(Y_2(C(\bdel))\) are cut-systems of the sutured compression body \(C(\bdel)\), and the edges correspond to handleslides. We then attach triangular, rectangular, and pentagonal $2$-cells along six particular kinds of loops (see Definition \ref{def:hslide}).

\begin{theorem}
The $2$-complex \(Y_2(C(\bdel))\) is connected and simply-connected.
\end{theorem}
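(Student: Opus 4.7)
The plan is to establish connectedness and then simple-connectedness of $Y_2(C(\bdel))$ by adapting the strategy of Hatcher--Thurston and \cite[Appendix A]{JTZ} to the sutured setting. The central tool is an intersection-number complexity measured against a fixed reference cut-system (naturally, $\bdel$ itself), combined with a double induction on the genus of $C(\bdel)$ and on this complexity.

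For connectedness, I would take $\bdel$ as the reference. Given any cut-system $\bga$, place representatives in minimal position on the Heegaard surface and consider the geometric intersection $|\bga \cap \bdel|$. The claim is that whenever $\bga \neq \bdel$ there exists a handleslide on $\bga$ strictly decreasing this count: one locates an outermost bigon or an innermost arc on a disk bounded by a component of $\bdel$, and slides the offending curve along this compressing disk to eliminate an intersection. The sutured boundary conditions restrict which slides are permitted, but because each component of $\bdel$ bounds a compressing disk in $C(\bdel)$ by construction, these restrictions never block the reduction step. Iterating gives a path of handleslides from $\bga$ to $\bdel$.

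For simple connectedness, let $\gamma$ be a loop in the $1$-skeleton. The strategy is to use the six families of $2$-cells to contract $\gamma$ to the trivial loop at $\bdel$. I would run an outer induction on the genus of $C(\bdel)$ and an inner induction on a pair consisting of the length of $\gamma$ and its maximum complexity $|\cdot \cap \bdel|$ over vertices of $\gamma$. The triangular $2$-cells encode compatibilities between handleslides sharing a common slid-off curve; the rectangular $2$-cells let one commute handleslides with essentially disjoint supports; the pentagonal $2$-cells absorb the associativity-type relations that arise when three handleslides share a curve. Using these $2$-cells together with the reduction from connectedness, one can homotope $\gamma$ into a neighborhood of $\bdel$ where only finitely many configurations occur; a direct check combined with the lower-genus inductive hypothesis finishes the argument.

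The main obstacle will be verifying that the six attached families actually kill \emph{every} relation among handleslides, with no additional $2$-cell required. This reduces to a careful case analysis of how two or three handleslides interact on a sutured Heegaard surface, including degenerate configurations and those involving multiple sutures --- the latter being the new feature relative to the classical single-basepoint case of \cite[Appendix A]{JTZ}. I expect the overall outline to parallel that appendix closely, with most of the new effort devoted to bookkeeping for the multi-suture configurations that distinguish the sutured setting from the closed one.
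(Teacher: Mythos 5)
Your sketch identifies the right general theme (induction, intersection complexity, the six families of $2$-cells), but it misses the key structural idea that makes the paper's proof go through, and the step you flag as the ``main obstacle'' is exactly where the missing idea does the real work.

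The paper does not argue directly in $Y_2(C(\bdel))$. Instead it introduces an intermediate $2$-complex $X_2(C(\bdel))$ whose edges are \emph{simple moves} --- moves $\iso{\alpha,\vec{\alpha}} \to \iso{\alpha',\vec{\alpha}}$ in which the old and new curve are actually \emph{disjoint} on the surface --- with triangular and ``self-separated square'' $2$-cells. That complex is tractable precisely because everything is built from disjoint curve configurations, so one can run Wajnryb's inductive argument (induction on $|\bdel|$, cutting along common curves, reducing segment counts and radii). Then the paper defines a \emph{minimal resolution}: each simple-move edge $\iso{\alpha_1,\vec{\beta}} \to \iso{\alpha_2,\vec{\beta}}$ is replaced by a specific chain of elementary handleslides in $Y_1(C(\bdel))$ read off a regular pants decomposition of a genus-zero subsurface $F$ bounded by $\alpha_1$, $\alpha_2$, and some $\vec{\beta}$-curves; the paper proves all such resolutions of a fixed edge are homotopic in $Y_2(C(\bdel))$, and that the boundary of each $X_2$-cell resolves to a nullhomotopic loop in $Y_2(C(\bdel))$. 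Simple-connectedness of $Y_2$ is then inherited from simple-connectedness of $X_2$.

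Your proposed direct induction in $Y_2(C(\bdel))$ on genus and on $|\cdot \cap \bdel|$ has a concrete difficulty: a handleslide output curve generically intersects the slid curve, and there is no reason an individual handleslide along a given loop decreases geometric intersection with a fixed reference system $\bdel$. So ``homotope $\gamma$ into a neighborhood of $\bdel$'' using only the six families of $2$-cells is not a reduction step you can actually run --- you would need a normal form for handleslide sequences, and constructing one is the content of the minimal-resolution argument, which in turn only works because the ambient frame is the simple-move complex. Separately, you do not isolate the special issue in the sutured setting: when $\partial\Sigma$ is disconnected, cut-systems contain separating curves, and the proof requires a new $2$-cell type (the self-separated square) in $X_2$ that has no analogue in Wajnryb's handlebody complex --- this is the genuinely new bookkeeping, not simply ``more sutures.''

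Your connectedness sketch is closer to what the paper does (it cites the Reidemeister--Singer-style handleslide lemma from [JTZ]), and the intersection-reduction idea is essentially right there. But for simple-connectedness you should restructure: first prove the simple-move complex $X_2(C(\bdel))$ is simply connected (where your genus/complexity induction with disjoint curves has a chance to work, following Wajnryb), then transfer to $Y_2(C(\bdel))$ via a well-definedness statement for minimal resolutions.
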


The fact that any two compression-equivalent attaching sets are related by a sequence of handleslides implies that the complex is connected (cf.~\cite[Lemma~2.11]{JTZ}). To prove simple-connectivity, we adapt the argument outlined in \cite{JTZ}. We begin by constructing another $2$-complex \(X_2(C(\bdel))\), whose $1$-skeleton contains that of \(Y_2(C(\bdel))\) as a spanning subgraph.

In \cite{Wa1998}, Wajnryb constructed a $2$-complex \(X_2(H_g)\) for a genus-$g$ handlebody \(H_g\), where the vertices are cut-systems and the edges correspond to \emph{simple moves}. We generalize this construction to the setting of sutured compression bodies and define a corresponding complex \(X_2(C(\bdel))\) for the sutured compression body \(C(\bdel)\). By modifying Wajnryb’s proof, we establish the following result by induction on the number of curves in the attaching set \(\bdel\).

\begin{theorem}
The $2$-complex \(X_2(C(\bdel))\) is connected and simply-connected.
\end{theorem}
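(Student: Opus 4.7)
The plan is to prove both assertions by induction on $n := |\bdel|$, adapting Wajnryb's argument for handlebodies in \cite{Wa1998} to the sutured setting. The base case $n = 0$ is trivial, since the only attaching set is empty and $X_2$ reduces to a point; the case $n = 1$ is a small direct check. Connectedness at every level is then immediate from the fact that any two cut-systems of $C(\bdel)$ are related by a sequence of handleslides (cf.\ \cite[Lemma~2.11]{JTZ}), and that each handleslide decomposes into a finite composition of simple moves. I therefore focus on simple connectivity, assuming inductively that $X_2(C(\bdel'))$ is simply connected for every sutured compression body whose attaching set has fewer than $n$ curves.

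Fix a base cut-system $\bv_0 = (v_1, \ldots, v_n)$ and let $X_2^{v_n} \subset X_2(C(\bdel))$ denote the full subcomplex spanned by those cut-systems containing $v_n$. Compressing $C(\bdel)$ along $v_n$ produces a sutured compression body $C'$ whose cut-systems have exactly $n - 1$ curves, and the assignment \enquote{delete $v_n$} gives a natural identification of $X_2^{v_n}$ with $X_2(C')$. By the inductive hypothesis, $X_2^{v_n}$ is simply connected. It therefore suffices to show that every edge-loop $\gamma$ in $X_2(C(\bdel))$ is homotopic, through the prescribed $2$-cells of $X_2(C(\bdel))$, to an edge-loop lying inside $X_2^{v_n}$.

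This reduction is the technical heart of the proof and transcribes Wajnryb's combinatorial scheme. For each vertex $\bu$ on $\gamma$ that does not contain $v_n$, one builds a \enquote{connector path} from $\bu$ to a cut-system containing $v_n$; one then shows that between consecutive vertices of $\gamma$ the two connector paths, together with the edge of $\gamma$ joining them, bound a disk assembled from the triangular, rectangular, and pentagonal relations of $X_2(C(\bdel))$. The connector paths are constructed by repeatedly handlesliding curves of $\bu$ off $v_n$ using intersection-reducing moves, with the intersection number $|\bu \cap v_n|$ serving as the complexity.

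The main new obstacle relative to \cite{Wa1998} is geometric rather than combinatorial: every auxiliary arc, dual curve, and band appearing in these connector-path constructions must be chosen in the complement of the sutures as well as of the current cut-system, and one must verify at each intermediate step that compressibility of the Heegaard surface in the sutured compression body is preserved, so that the candidate curves genuinely form cut-systems of $C(\bdel)$. I expect that the bulk of the work will be a case analysis showing that when Wajnryb's move would cross a suture, it can be replaced by an equivalent composition of sutured-legal simple moves whose corresponding loop in $X_2(C(\bdel))$ already bounds one of the distinguished $2$-cells. Once this sutured-compatibility check is in place, Wajnryb's local relation analysis carries over verbatim and yields the required null-homotopy.
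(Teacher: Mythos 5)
Your overall scheme---induction on $|\bdel|$, connectivity via handleslides, and pushing a loop toward the subcomplex of cut-systems containing a fixed curve by an intersection-reducing procedure---is consistent in spirit with the paper, which follows Wajnryb's two-level induction (on the radius of the loop around a fixed curve $\alpha$, then on the number of segments at radius zero). However, there is a genuine gap at exactly the point you wave away. The claim that ``Wajnryb's local relation analysis carries over verbatim'' once arcs are kept off the sutures is false, and the suture-avoidance issue you flag as the ``main new obstacle'' is a red herring (curves live in $\operatorname{Int}(\Sigma)$ by definition). The real new difficulty is that in the sutured setting a cut-system may contain separating curves, so the inductive cutting produces disconnected surfaces and curves that are trivial in $H_1(\Sigma;\mathbb{Z}/2)$ restricted to a component. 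Concretely: when your ladder construction produces a square $\iso{\alpha,\gamma}\to\iso{\beta,\gamma}\to\iso{\beta,\delta}\to\iso{\alpha,\delta}$ of pairwise disjoint curves in which neither $\{\alpha,\beta\}$ nor $\{\gamma,\delta\}$ completes to a cut-system and the two pairs lie in different components of $\Sigma\setminus\vec{\alpha}$, this square bounds \emph{no} sum of triangles. The paper must introduce a new type of $2$-cell (the self-separated square) precisely to kill these loops. Your proposal never mentions these cells; indeed you misstate the $2$-cells of $X_2(C(\bdel))$ as ``triangular, rectangular, and pentagonal''---the pentagons belong to $Y_2$, not $X_2$, and the rectangles of $X_2$ are only the self-separated ones.

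Two further steps are asserted rather than proved. First, the endgame of your reduction---a loop all of whose vertices are merely \emph{disjoint} from $v_n$ but need not \emph{contain} it---is not a formality: the paper handles it (Lemma on radius-zero loops) by a second induction on the number of segments together with a case analysis on $\dim\operatorname{span}\{[\alpha],[\beta],[\gamma]\}\subset H_1(\Sigma;\mathbb{Z}/2)$, because completing disjoint meridians to cut-systems in a sutured compression body requires the homological ``good cut'' condition replacing Wajnryb's ``non-separating.'' Second, the intersection-reducing surgery producing the connector paths must output curves that are again good cuts and must be compatible with the segment structure (the analogues of Wajnryb's Lemmas 5 and 13); this is where the mod-$2$ homology bookkeeping enters and cannot be skipped. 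Without the self-separated squares and the good-cut analysis, the null-homotopies you need simply do not exist in the complex as you have described it.
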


Next, we adapt the method of \emph{minimal resolution}, originally introduced in \cite{JTZ}, to our setting. This method provides a way to resolve each simple move into a sequence of handleslides, unique up to homotopy in \(Y_2(C(\bdel))\). Moreover, we show that each $2$-cell \(D\) in \(X_2(C(\bdel))\) can be expressed as a sum of $2$-cells in \(Y_2(C(\bdel))\), where each edge on the boundary of \(D\) is replaced by its minimal resolution. Combining these results, and using the fact that \(X_2(C(\bdel))\) is simply-connected, we conclude that \(Y_2(C(\bdel))\) is also simply-connected. (See Figure \ref{sea_leaf}.)

\begin{figure}[h]
{
   \fontsize{10pt}{11pt}\selectfont
   \def\svgwidth{4.2in}
   \begin{center}
\begingroup%
  \makeatletter%
  \providecommand\color[2][]{%
    \errmessage{(Inkscape) Color is used for the text in Inkscape, but the package 'color.sty' is not loaded}%
    \renewcommand\color[2][]{}%
  }%
  \providecommand\transparent[1]{%
    \errmessage{(Inkscape) Transparency is used (non-zero) for the text in Inkscape, but the package 'transparent.sty' is not loaded}%
    \renewcommand\transparent[1]{}%
  }%
  \providecommand\rotatebox[2]{#2}%
  \newcommand*\fsize{\dimexpr\f@size pt\relax}%
  \newcommand*\lineheight[1]{\fontsize{\fsize}{#1\fsize}\selectfont}%
  \ifx\svgwidth\undefined%
    \setlength{\unitlength}{438.11737781bp}%
    \ifx\svgscale\undefined%
      \relax%
    \else%
      \setlength{\unitlength}{\unitlength * \real{\svgscale}}%
    \fi%
  \else%
    \setlength{\unitlength}{\svgwidth}%
  \fi%
  \global\let\svgwidth\undefined%
  \global\let\svgscale\undefined%
  \makeatother%
  \begin{picture}(1,0.45964037)%
    \lineheight{1}%
    \setlength\tabcolsep{0pt}%
    \put(0,0){\includegraphics[width=\unitlength,page=1]{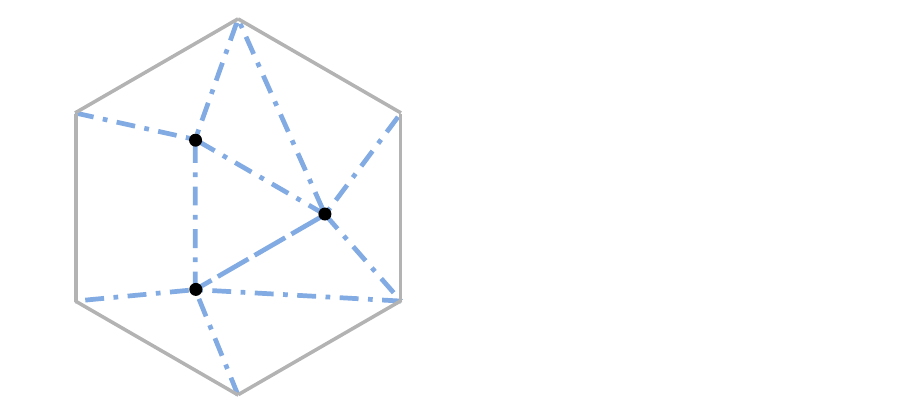}}%
    \put(0.98185852,0.20434567){\color[rgb]{0,0,0}\makebox(0,0)[rt]{\lineheight{1.25}\smash{\begin{tabular}[t]{r}Minimal resolution\end{tabular}}}}%
    \put(0,0){\includegraphics[width=\unitlength,page=2]{sea_leaf.pdf}}%
    \put(0.98112766,0.13407252){\color[rgb]{0,0,0}\makebox(0,0)[rt]{\lineheight{1.25}\smash{\begin{tabular}[t]{r}Sum of $2$-cells in $Y_2(C(\bdel))$\end{tabular}}}}%
    \put(0,0){\includegraphics[width=\unitlength,page=3]{sea_leaf.pdf}}%
  \end{picture}%
\endgroup%

   \end{center}
   \caption{}
   \label{sea_leaf}
}
\end{figure}

Let \( \mathcal{G}' \) be the subgraph of \( \mathcal{G} \) obtained by replacing \( \alpha \)/\( \beta \)-equivalences with \( \alpha \)/\( \beta \)-handleslides. As in Appendix A of \cite{JTZ}, we can restrict the defining properties of weak and strong Heegaard invariants to \( \mathcal{G}' \). We refer to the corresponding Heegaard invariants defined on \( \mathcal{G}' \) as \emph{loose} and \emph{tight Heegaard invariants}. Let \(\cS\) be a set of diffeomorphism types of sutured manifolds, and let \(\mathcal{C}\) be any category. We denote by \(\cG(\cS)\) the full subgraph of \(\cG\) whose vertices are sutured isotopy diagrams \(H\) satisfying \(S(H) \in \cS\). Similarly, \(\cG'(\cS)\) is the full subgraph of \(\cG'\) spanned by sutured isotopy diagrams \(H\) for which \(S(H) \in \cS\). 

\begin{theorem}[Generalization of Theorem A.6 in \cite{JTZ}] 
Every loose Heegaard invariant $F': \cG'(\cS) \rightarrow \mathcal{C}$ extends to a weak Heegaard invariant $F: \cG(\cS) \rightarrow \mathcal{C}$. If $F'$ is tight, then $F'$ uniquely extends to a strong Heegaard invariant $F: \cG(\cS) \rightarrow \mathcal{C}$.
\end{theorem}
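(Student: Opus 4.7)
The plan is to define $F$ on vertices to agree with $F'$, and on edges as follows. Edges of $\cG(\cS)$ that already lie in $\cG'(\cS)$---handleslides, diffeomorphisms, (de)stabilizations, and basepoint moves---are assigned the same morphisms as under $F'$. The remaining edges are the $\alpha$- and $\beta$-equivalences. By \cite[Lemma~2.11]{JTZ}, any $\alpha$-equivalence $(\Sigma,\bal_1,\bbe) \to (\Sigma,\bal_2,\bbe)$ is realized by a path $p$ of $\alpha$-handleslides in the $1$-skeleton of $Y_2(C(\bal_1))$; I would set $F$ on this equivalence edge to be the composition of $F'$ along $p$, and proceed analogously for $\beta$-equivalences.

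The crucial point is well-definedness. If $p$ and $q$ are two handleslide paths realizing the same equivalence, then $p * q^{-1}$ is a loop in the $1$-skeleton of $Y_2(C(\bal_1))$. By Theorem~1.1, this $1$-skeleton becomes simply-connected once the six kinds of 2-cells from Definition~\ref{def:hslide} are attached, so $p * q^{-1}$ is null-homotopic via a disk tiled by these 2-cells. The loose Heegaard invariant axioms---the restrictions to $\cG'$ of the weak Heegaard invariant axioms---include the requirement that $F'$ send each of the six boundary loops to the identity morphism, so $F'$ assigns the identity to $p * q^{-1}$, and the two candidate definitions of $F$ on the equivalence agree. Once well-definedness is established, the remaining weak-invariant axioms for $F$---naturality with respect to diffeomorphisms, (de)stabilizations, and basepoint moves---follow by expanding each equivalence appearing in those axioms into a handleslide path and invoking the corresponding loose axiom on $\cG'$, again using Theorem~1.1 to remove the ambiguity in the choice of decomposition.

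For the tight case, the same extension $F$ should satisfy the additional strong Heegaard invariant axioms, since each such axiom reduces, by handleslide-decomposition of its equivalence edges, to the corresponding tight axiom on $\cG'$. Uniqueness is then immediate: any strong Heegaard invariant extending $F'$ is forced by functoriality on $\cG_{\alpha}$ and $\cG_{\beta}$ to send each equivalence edge to the composition of $F'$ along any handleslide decomposition of it, so $F$ is uniquely determined by $F'$. The main technical obstacle I anticipate is the bookkeeping: identifying each of the six 2-cell types in $Y_2(C(\bdel))$ with an explicit loose (resp.\ tight) axiom in the definition of a weak (resp.\ strong) Heegaard invariant, and handling moves---especially diffeomorphisms and stabilizations---that involve both $\alpha$- and $\beta$-curves simultaneously, where one must select compatible handleslide decompositions on both sides of the Heegaard surface and verify commutativity of the resulting squares in $\cG(\cS)$. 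This is conceptually clear but will require careful case analysis.
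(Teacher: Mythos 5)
Your overall strategy is the same as the paper's: extend $F'$ to equivalence edges by choosing a handleslide path (connectivity of $Y_2(C(\bal))$, Proposition \ref{prop:y0}), and use simple-connectivity of $Y_2(C(\bal))$ (Theorem \ref{thm:simpc}) together with commutation around the six handleslide loops to get well-definedness and uniqueness. However, two points need correction. First, you assert that the \emph{loose} axioms already force $F'$ to send each of the six boundary loops of Definition \ref{def:hslide} to the identity; this is false --- a loose invariant is only a graph morphism sending arrows to isomorphisms, and commutation around handleslide loops is part of \emph{tightness}. Fortunately this does not break the first half of the statement: a weak Heegaard invariant carries no compatibility conditions on its edges, so for the loose-to-weak extension one simply picks any handleslide path for each equivalence edge and no well-definedness is required. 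Path-independence is needed (and available) only in the tight case.

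Second, the part you defer as ``bookkeeping'' is where the real content of the second half lies, and it is not pure bookkeeping. The commutativity axiom for distinguished rectangles mixing a stabilization with an $\alpha$- or $\beta$-equivalence (type (2) in \cite[Definition 2.29]{JTZ}) cannot be reduced to handleslide rectangles alone: after stabilizing a handleslide sequence $h_1,\dots,h_k$, the resulting diagram may differ from the prescribed corner of the rectangle by handleslides over the new curve $\alpha_0$, because isotopies on $\Sigma$ can pass over the connect-sum disk. The paper closes this gap by appending a row of \emph{stabilization slides}, and this is precisely why ``$F$ commutes along every stabilization slide'' is included as a separate condition in the definition of a tight invariant --- a condition your reduction never invokes. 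Without it, your argument for the commutativity axiom fails for type (2) rectangles. The analogous decompositions for types (1) and (3) (a grid of handleslide rectangles, and a row of rectangles with diffeomorphism sides) do go through as you describe.
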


We can apply the theorem above to subsets of diffeomorphism types of balanced sutured manifolds, and obtain corollaries for classical (i.e., non-sutured) Heegaard invariants of 3-manifolds with multiple basepoints and of multi-based links in 3-manifolds (cf. Definition 2.4 and Definition 2.5 in \cite{JTZ}).

\begin{corollary} Let $\cS=\cS_{\mathrm{man*s}}$ be the set of all diffeomorphism types of sutured manifold associated to a closed oriented $3$-manifold with basepoints and an oriented tangent $2$-plane for each basepoint. Then every loose Heegaard invariant $F': \cG'(\cS) \rightarrow \mathcal{C}$ extends to a weak Heegaard invariant $F: \cG(\cS) \rightarrow \mathcal{C}$.
If, furthermore, $F'$ is a tight Heegaard invariant, then $F'$ uniquely extends to a strong Heegaard invariant $F: \cG(\cS) \rightarrow \mathcal{C}$.
\end{corollary}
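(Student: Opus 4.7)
The plan is to deduce this corollary as a direct specialization of Theorem 1.3. Since Theorem 1.3 is stated for an arbitrary set $\cS$ of diffeomorphism types of balanced sutured manifolds, the only work is to verify that $\cS_{\mathrm{man*s}}$ genuinely is such a set, and that the subgraphs $\cG(\cS_{\mathrm{man*s}})$ and $\cG'(\cS_{\mathrm{man*s}})$ inherit the right structure for the notions of weak/strong/loose/tight invariants to make sense unchanged.

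First I would recall the standard construction from Definition 2.4 of \cite{JTZ}: given a closed oriented $3$-manifold $M$ with a nonempty finite set of basepoints $\{p_i\}$ and an oriented tangent $2$-plane $P_i \subset T_{p_i} M$ at each basepoint, one removes a small open ball around each $p_i$ and places a single suture on each resulting boundary sphere along the great circle determined by $P_i$. The outcome is a balanced sutured manifold whose diffeomorphism type depends only on the data $(M,\{p_i\},\{P_i\})$ up to isotopy. Thus $\cS_{\mathrm{man*s}}$ is a well-defined subset of the set of diffeomorphism types of balanced sutured manifolds, and so lies in the domain to which Theorem 1.3 applies.

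Next I would observe that under this construction, sutured Heegaard diagrams for the associated sutured manifold correspond naturally to multi-pointed Heegaard diagrams for $(M, \{p_i\})$, with the tangent $2$-plane framings determining which isotopy class of suture appears. Hence the full subgraphs $\cG(\cS_{\mathrm{man*s}})$ and $\cG'(\cS_{\mathrm{man*s}})$ are exactly the subgraphs of $\cG$ and $\cG'$ spanned by isotopy diagrams realizing these closed-manifold-with-basepoints data. In particular, loose and tight Heegaard invariants on $\cG'(\cS_{\mathrm{man*s}})$ coincide with the classical notions in the multi-pointed setting, so no additional definitions are required.

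Finally, applying Theorem 1.3 with $\cS = \cS_{\mathrm{man*s}}$ yields both conclusions: any loose invariant $F'$ extends to a weak invariant $F$, and any tight invariant $F'$ extends uniquely to a strong invariant $F$. I expect the only nontrivial point, and the main (still minor) obstacle, to be confirming that the sutured isotopy relation on diagrams restricts correctly to the usual isotopy relation on multi-pointed Heegaard diagrams once the basepoint framings are fixed; this is essentially a translation exercise using the equivalence of data between $(M,\{p_i\},\{P_i\})$ and its associated sutured manifold, and does not require any new input beyond what is already recorded in \cite{JTZ}.
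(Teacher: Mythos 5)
Your proposal is correct and matches the paper's approach: the corollary is obtained by invoking the main extension theorem (your ``Theorem~1.3'', the paper's Theorem~\ref{thm:mom}) with $\cS = \cS_{\mathrm{man*s}}$, after noting via \cite[Definition~2.4]{JTZ} that a closed oriented $3$-manifold with basepoints and oriented tangent $2$-planes determines a balanced sutured manifold, so $\cS_{\mathrm{man*s}}$ is a legitimate subset of diffeomorphism types. One small point: the ``translation exercise'' you flag at the end between sutured isotopy diagrams and classical multi-pointed Heegaard diagrams is not actually needed here, since the corollary is phrased entirely in terms of the sutured graphs $\cG(\cS)$ and $\cG'(\cS)$ (the passage from multi-pointed Heegaard diagrams to sutured diagrams is only addressed in a later remark), and also the theorem is stated for an arbitrary set of diffeomorphism types of sutured manifolds, not only balanced ones.
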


\begin{corollary} Let $\cS=\cS_{\mathrm{link*s}}$ be the set of all diffeomorphism types of sutured manifold associated to a closed oriented $3$-manifold \( Y \) and a multi-based oriented link \((L, \bz, \bw)\) in \( Y \) (with at least one $\bz$ and one $\bw$ marking on each component of $L$). Then every loose Heegaard invariant $F': \cG'(\cS) \rightarrow \mathcal{C}$ extends to a weak Heegaard invariant $F: \cG(\cS) \rightarrow \mathcal{C}$.
If, furthermore, $F'$ is a tight Heegaard invariant, then $F'$ uniquely extends to a strong Heegaard invariant $F: \cG(\cS) \rightarrow \mathcal{C}$.
\end{corollary}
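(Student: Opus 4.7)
The plan is to reduce the corollary directly to the previous theorem by specializing to $\cS = \cS_{\mathrm{link*s}}$. Since the theorem is already stated for an arbitrary set $\cS$ of diffeomorphism types of balanced sutured manifolds, the only thing to verify is that the multi-based link setting fits into this framework via a bijective and move-preserving identification between multi-pointed link Heegaard diagrams and sutured Heegaard diagrams whose underlying sutured manifolds lie in $\cS_{\mathrm{link*s}}$.

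First I would recall the standard construction: given $(Y, L, \bz, \bw)$, remove an open tubular neighborhood of $L$ and place meridional sutures on each boundary torus $\partial \nu(L_i)$, one suture between each consecutive pair of basepoints along $L_i$ in the induced cyclic order. The hypothesis that each component of $L$ carries at least one $\bz$- and one $\bw$-basepoint ensures that the resulting sutured manifold is balanced, so $\cS_{\mathrm{link*s}}$ is indeed a valid input set for the previous theorem. Moreover, a multi-pointed Heegaard diagram for $(Y, L, \bz, \bw)$ is tautologically the same data as a sutured Heegaard diagram for the associated sutured manifold, following \cite[Definition 2.5]{JTZ}.

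The main content to verify is that the four elementary moves on the link side, namely isotopy, $\alpha$/$\beta$-handleslide (resp.\ $\alpha$/$\beta$-equivalence), stabilization, and diffeomorphism, correspond precisely to the same moves on the sutured side. This promotes the tautological vertex-level correspondence to an isomorphism of the graphs $\cG(\cS_{\mathrm{link*s}})$ and $\cG'(\cS_{\mathrm{link*s}})$ with their analogues built from multi-based link diagrams, under which the notions of loose, tight, weak, and strong Heegaard invariants coincide on the two sides. Once this dictionary is in place, the previous theorem immediately supplies the required unique extension to a weak (resp.\ strong) Heegaard invariant, establishing the corollary.

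The only step requiring genuine care, and hence the main obstacle, is matching the diffeomorphism-edges of the two graphs. A diffeomorphism of $(Y, L, \bz, \bw)$ must preserve the link setwise together with the $\bz$/$\bw$-labelling of its basepoints, while a diffeomorphism of the associated sutured manifold must preserve the sutures with orientation; the compatibility of these two conditions is a direct unwinding of definitions but must be verified edge by edge. Once this check is carried out, the same argument applies verbatim, and the corollary follows.
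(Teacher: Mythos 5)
Your proposal is correct and matches the paper's argument: the corollary is obtained by applying the general extension theorem to the set $\cS_{\mathrm{link*s}}$ of diffeomorphism types of sutured manifolds obtained from link complements with meridional sutures at the basepoints, the balancedness being guaranteed by the hypothesis of at least one $\bz$- and one $\bw$-basepoint per component. Note that since the statement is already phrased in terms of the sutured graphs $\cG(\cS)$ and $\cG'(\cS)$, the dictionary between multi-pointed link diagrams and sutured diagrams that you flag as the main obstacle is not actually needed for the corollary itself (the paper relegates it to a remark about deleting disks around basepoints), so the specialization is immediate.
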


\medskip
\textbf{Organization of the paper.} In Section \ref{cut-systems}, we define cut-systems for sutured compression bodies. In Sections \ref{x2} and \ref{y2}, we construct the 2-complexes \( X_2(C(\boldsymbol{\delta})) \) and \( Y_2(C(\boldsymbol{\delta})) \), respectively, and show that they are connected and simply connected for any sutured compressionbody \( C(\boldsymbol{\delta}) \). In Section \ref{strong}, we define loose and tight Heegaard invariants and prove that they extend uniquely to weak and strong Heegaard invariants, respectively.

\medskip
\textbf{Acknowledgments.}
The author thanks Ciprian Manolescu for suggesting this problem and for his ongoing guidance and support.

\section{sutured compression body and cut-systems}\label{cut-systems}
Let $\Sigma$ be a compact, oriented (possibly disconnected) surface such that the boundary of each connected component of $\Sigma$ is nonempty. We also decorate $\Sigma$ with $n$ distinguished disks ${D_1, \dots, D_n} \subset \operatorname{Int}(\Sigma)$. All isotopies of $\Sigma$ are required to fix these distinguished disks. A curve on \(\Sigma\) refers to a simple closed curve on \(\operatorname{Int}(\Sigma)\) that is disjoint from the disks \(D_1, \dots, D_n\).

\begin{definition} [cf. Definition 2.6 in \cite{JTZ}]
An \emph{attaching set} \( \bdel \) is a collection of disjoint curves on \( \Sigma \) such that every connected component of \( \Sigma \setminus \bdel \) contains at least one component of \( \partial \Sigma \). We denote the isotopy class of \( \bdel \) by \( \iso{ \bdel } \). The number of curves in \( \bdel \) is denoted by \( |\bdel| \). See Figure \ref{attaching_set} for an example.
\end{definition}

\begin{figure}[h]
{
   \fontsize{12pt}{11pt}\selectfont
   \def\svgwidth{2.8in}
   \begin{center}
\begingroup%
  \makeatletter%
  \providecommand\color[2][]{%
    \errmessage{(Inkscape) Color is used for the text in Inkscape, but the package 'color.sty' is not loaded}%
    \renewcommand\color[2][]{}%
  }%
  \providecommand\transparent[1]{%
    \errmessage{(Inkscape) Transparency is used (non-zero) for the text in Inkscape, but the package 'transparent.sty' is not loaded}%
    \renewcommand\transparent[1]{}%
  }%
  \providecommand\rotatebox[2]{#2}%
  \newcommand*\fsize{\dimexpr\f@size pt\relax}%
  \newcommand*\lineheight[1]{\fontsize{\fsize}{#1\fsize}\selectfont}%
  \ifx\svgwidth\undefined%
    \setlength{\unitlength}{497.09812915bp}%
    \ifx\svgscale\undefined%
      \relax%
    \else%
      \setlength{\unitlength}{\unitlength * \real{\svgscale}}%
    \fi%
  \else%
    \setlength{\unitlength}{\svgwidth}%
  \fi%
  \global\let\svgwidth\undefined%
  \global\let\svgscale\undefined%
  \makeatother%
  \begin{picture}(1,0.42948374)%
    \lineheight{1}%
    \setlength\tabcolsep{0pt}%
    \put(0,0){\includegraphics[width=\unitlength,page=1]{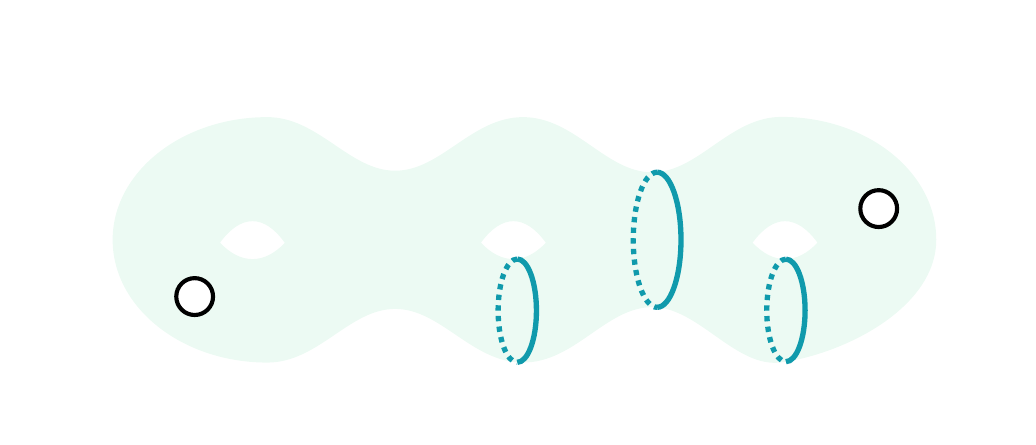}}%
    \put(0.62103025,0.03632706){\color[rgb]{0.0627451,0.60392157,0.6745098}\makebox(0,0)[lt]{\lineheight{1.25}\smash{\begin{tabular}[t]{l}$\bdel$\end{tabular}}}}%
    \put(-0.00103803,0.17552003){\color[rgb]{0,0,0}\makebox(0,0)[lt]{\lineheight{1.25}\smash{\begin{tabular}[t]{l}$\Sigma$\end{tabular}}}}%
    \put(0,0){\includegraphics[width=\unitlength,page=2]{attaching_set.pdf}}%
  \end{picture}%
\endgroup%

   \end{center}
   \caption{}
   \label{attaching_set}
}
\end{figure}

\begin{proposition}\label{prop:lid}
A collection of disjoint curves $\left(\alpha_1,\dots, \alpha_k\right)$ on $\Sigma$ forms an attaching set if and only if the homology classes $[\alpha_1],\dots, [\alpha_k]$ are linearly independent in $H_1(\Sigma;\mathbb{Z}/2)$.
\end{proposition}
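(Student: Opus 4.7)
I will prove the two directions separately, using cellular chains for $H_*(\Sigma; \mathbb{F}_2)$ on a triangulation of $\Sigma$ adapted to $\bdel$---i.e., one in which each $\alpha_i$, each distinguished disk, and each component of $\partial \Sigma$ is a subcomplex. The key combinatorial observation is that every edge of $\Sigma$ on $\partial \Sigma$ is a face of exactly one 2-cell, while every interior edge is a face of exactly two 2-cells. Let $C_1, \ldots, C_m$ denote the components of $\Sigma \setminus \bdel$.

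For the forward direction, assume $\bdel$ is an attaching set but, toward contradiction, that $\sum_{i \in I}[\alpha_i]=0$ in $H_1(\Sigma;\mathbb{F}_2)$ for some nonempty $I$. Then $\sum_{i \in I}\alpha_i = \partial D$ for some 2-chain $D$. Because $\partial D$ contains no edge of $\partial \Sigma$ and each such edge has a unique adjacent 2-cell, $D$ has coefficient $0$ on every 2-cell with an edge on $\partial \Sigma$. Because $\partial D$ contains no interior edge off $\bdel$, $D$ has matching coefficients on any two 2-cells sharing such an edge. Together these force $D = \sum_j \epsilon_j \overline{C_j}$ for some $\epsilon_j \in \mathbb{F}_2$, with $\epsilon_j=0$ whenever $\overline{C_j}$ meets $\partial \Sigma$. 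Since $I \neq \emptyset$ the chain $D$ is nonzero, so some $C_j$ with $\epsilon_j=1$ has $\overline{C_j}$ disjoint from $\partial \Sigma$, contradicting the attaching-set hypothesis.

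For the converse I prove the contrapositive. Suppose $\bdel$ is not an attaching set, and pick a component $C$ of $\Sigma \setminus \bdel$ with $\overline{C} \cap \partial \Sigma = \emptyset$. Viewing $\overline{C}$ as a 2-chain $D$, the same edge analysis gives $\partial D = \sum_{i \in I} \alpha_i$, where $I$ consists of the indices $i$ such that $\alpha_i$ has exactly one side in $C$. It suffices to show $I \neq \emptyset$. Otherwise $D$ is a 2-cycle; but since $\Sigma$ has nonempty boundary on every component, $C_3(\Sigma;\mathbb{F}_2) = 0$ and $H_2(\Sigma;\mathbb{F}_2)=0$, so the only 2-cycle is the zero chain---forcing $C = \emptyset$, a contradiction. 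Hence $\sum_{i \in I}[\alpha_i]=0$ is a nontrivial relation, so the $[\alpha_i]$ are linearly dependent.

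The only real content is the edge-counting identification, in the first paragraph, of which 2-chains have chain-boundary supported in $\bdel$ and disjoint from $\partial \Sigma$. Once that is in hand, both directions reduce to bookkeeping on components of $\Sigma \setminus \bdel$ together with the vanishing of $H_2$ for surfaces with nonempty boundary.
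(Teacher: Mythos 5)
Your proof is correct and follows essentially the same strategy as the paper's: triangulate $\Sigma$ so that $\bdel$ is simplicial, show that any 2-chain whose boundary is supported on $\bdel$ (and avoids $\partial\Sigma$) must be constant on each component of $\Sigma \setminus \bdel$ and vanish on components touching $\partial\Sigma$, then conclude in both directions by bookkeeping on these components. The only substantive extra care in your write-up is the use of $H_2(\Sigma;\mathbb{F}_2)=0$ (for a surface with boundary on every component) to justify that the relation produced in the "not an attaching set" direction is genuinely nontrivial, a point the paper asserts without argument.
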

\begin{proof}
Suppose $\left(\alpha_1,\dots, \alpha_k\right)$ is not an attaching set. Then, cutting the surface $\Sigma$ along the curves $\alpha_1,\dots, \alpha_k$ produces a connected component $F\subset \Sigma$ which contains no component of $\partial \Sigma$. Since $\partial F$ contains some curve $\alpha_i$ with multiplicity one mod 2, we obtain a nontrivial linear relation among the homology classes $[\alpha_1],\dots, [\alpha_k]$ in $H_1(\Sigma;\mathbb{Z}/2)$.

{Conversely}, suppose \((\alpha_1, \dots, \alpha_k)\) is an attaching set. Cutting the surface \( \Sigma \) along the curves \(\alpha_1, \dots, \alpha_k\) divides \( \Sigma \) into connected components \( F_1, \dots, F_m \). Choose a triangulation of \( \Sigma \) such that each $\alpha$-curve is a union of edges. Suppose there are coefficients \( c_i \in \mathbb{Z}/2 \) such that \(\sum_{i=1}^k c_i[\alpha_i] = 0\) in \( H_1(\Sigma; \mathbb{Z}/2) \). Thus, there exists a 2-chain \( C \) such that \(\sum_{i=1}^k c_i[\alpha_i] = \partial C\). Since each $F_j$ is connected, the coefficients of the triangles in $F_j$ must agree. Thus, we have that $C=\sum_{j=1}^m d_j F_j$ for some $d_j\in\mathbb{Z}/2$, and
$$\sum_{i=1}^k c_i[\alpha_i]=\partial C=\sum_{j=1}^m d_j \partial F_j$$
as 1-chains. Since \((\alpha_1, \dots, \alpha_k)\) is an attaching set, each $F_j$ contains at least one boundary component $\ell_j\subset \partial \Sigma$. Since $\partial C$ does not contain any $[\ell_j]$, we have that $d_j=0$ for all $i$, which implies that $c_i=0$ for all $i$. Hence, we showed that the homology classes are linearly independent in \( H_1(\Sigma; \mathbb{Z}/2) \).
\end{proof}

Given an attaching set $\bdel$ on $\Sigma$, let \( C(\bdel) \) be the sutured compression body obtained by taking \( \Sigma \times [0,1] \) and attaching 3-dimensional 2-handles along \( \bdel \times \{1\} \), with \( \gamma = \partial \Sigma \times [0,1] \) (see Figure \ref{boundary}). We define \( C_-(\bdel) = \Sigma \times \{0\} \) and  
\[
C_+(\bdel) = \partial C(\bdel) \setminus \text{Int} (C_-(\bdel) \cup \gamma).
\]
We abuse the notation and denote \( C_-(\bdel) \) by \( \Sigma \) for brevity.

\begin{figure}[h]
{
   \fontsize{10pt}{11pt}\selectfont
   \def\svgwidth{3.5in}
   \begin{center}
   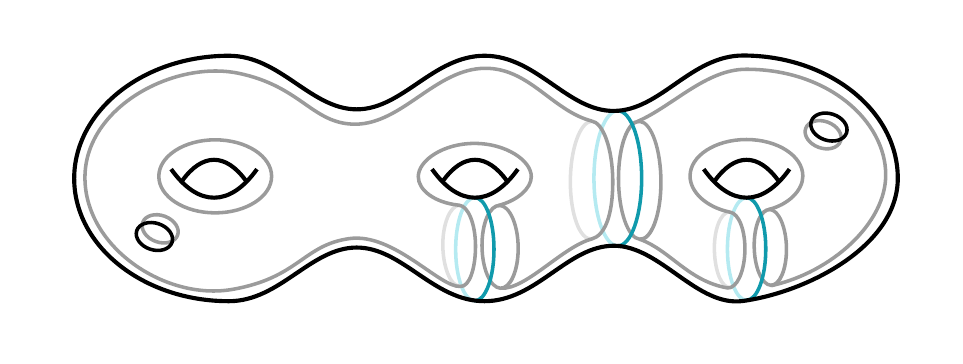
   \end{center}
   \caption{}
   \label{boundary}
}
\end{figure}

A curve on $\Sigma$ is called a \emph{meridian curve} if it bounds a properly-embedded disk in $C(\bdel)$. The proof of Lemma 2.9 in \cite{JTZ} shows that the sutured compression body $C(\bdel)$ is irreducible. Thus, given disjoint meridian curves \( \bal=(\alpha_1, \dots, \alpha_{k}) \) on \( \Sigma \), we can construct properly-embedded disjoint meridian disks \( D_{\alpha_1}, \dots, D_{\alpha_{k}} \) in the sutured compression body $C(\bdel)$. We refer to the process of removing disjoint tubular neighborhoods of meridian disks \( D_{\alpha_1}, \dots, D_{\alpha_{k}} \) as \emph{cutting along meridian curves} $\bal$. Notice that cutting along $\bal$ changes $C_-(\bdel)$ by a surgery along \( \bal \) and leaves both $C_+(\bdel)$ and the suture $\gamma$ unchanged. 

\begin{definition}[cf. Definition 2.8 in \cite{JTZ}]
Let \( \bdel \) and \( \bdel' \) be attaching sets on \( \Sigma \). We say they are \emph{compression equivalent}, denoted \( \bdel \sim \bdel' \), if there exists a diffeomorphism \( d: C(\bdel) \to C(\bdel') \) such that \( d|_{C_{-}(\bdel)} \) is the identity. This is an equivalence relation that descends to isotopy classes of attaching sets, and thus we write \( \iso{ \bdel } \sim \iso{ \bdel' } \) if \( \bdel \sim \bdel' \).
\end{definition}

\begin{proposition}\label{prop:equiv}
Let \(\bal = (\alpha_1, \dots, \alpha_{k})\) be an attaching set on $\Sigma$. Then, $\bal \sim\bdel$ if and only if the curves in $\bal$ are meridian curves of $C(\bdel)$ such that cutting \( C(\bdel) \) along \( \bal \) yields a sutured manifold diffeomorphic to the product sutured manifold \( C_+(\bdel) \times I \).
\end{proposition}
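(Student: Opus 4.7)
The plan is to prove both directions directly from the definitions. The forward direction is essentially functoriality: the compression equivalence can be used to transport the natural meridian disks of $C(\bal)$ into $C(\bdel)$. The reverse direction is more delicate, and my approach is to reconstruct both $C(\bal)$ and $C(\bdel)$ as a common 1-handle attachment to the product $C_+(\bdel) \times I$, and compare the two constructions rel $\Sigma$.

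For the forward direction, suppose $\bal \sim \bdel$ via a diffeomorphism $d : C(\bal) \to C(\bdel)$ that restricts to the identity on $\Sigma$. By the construction of $C(\bal) = \Sigma \times I \cup (\text{2-handles along } \bal \times \{1\})$, each $\alpha_i$ bounds a canonical meridian disk $D_i^{\bal}$: the union of the annulus $\alpha_i \times I$ with the core of the associated 2-handle. Its image $d(D_i^{\bal})$ is a properly embedded disk in $C(\bdel)$ with boundary $\alpha_i$, so each $\alpha_i$ is a meridian curve of $C(\bdel)$. Moreover, removing regular neighborhoods of the $D_i^{\bal}$ from $C(\bal)$ undoes the 2-handle attachments and collapses the collar, visibly yielding the product $C_+(\bal) \times I$. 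Transporting via $d$, cutting $C(\bdel)$ along $\bal$ using the meridian disks $d(D_i^{\bal})$ gives $C_+(\bal) \times I \cong C_+(\bdel) \times I$, where the last isomorphism uses that $d$ restricts to a diffeomorphism $C_+(\bal) \to C_+(\bdel)$.

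For the reverse direction, assume each $\alpha_i$ is a meridian curve of $C(\bdel)$ with chosen meridian disks $D_{\alpha_i}$, and that $M := C(\bdel) \setminus N(\bigsqcup D_{\alpha_i})$ is diffeomorphic to $C_+(\bdel) \times I$ as a sutured manifold. Since cutting affects only $C_-$, turning $\Sigma$ into the surgered surface $\Sigma_{\bal}$ while leaving $C_+(\bdel)$ and the suture $\gamma$ unchanged, the hypothesized product structure on $M$ provides an identification $\Sigma_{\bal} \cong C_+(\bdel)$, and the pairs of cut disks $D_{\alpha_i}^{\pm}$ sit on $\Sigma_{\bal}$ precisely at the locations where the annular neighborhoods $N(\alpha_i) \subset \Sigma$ were removed. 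Reconstructing $C(\bdel)$ from $M$ now amounts to attaching $k$ 1-handles with feet at these pairs. Applying the forward-direction analysis to $C(\bal)$ itself shows that $C(\bal)$ also results from attaching $k$ 1-handles to $\Sigma_{\bal} \times I$, with feet at the same pairs of disks on $\Sigma_{\bal}$ arising from surgering $\Sigma$ along $\bal$.

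The attaching data---pairs of disks on $\Sigma_{\bal}$, with framings pinned down by orientability of the ambient 3-manifold---therefore coincide in the two constructions, producing diffeomorphic sutured compression bodies via an identification that is the identity on $\Sigma$, so $\bal \sim \bdel$. The subtlest point, and the one I would spell out most carefully, is the matching of attaching data: namely, that the cut disks $D_{\alpha_i}^{\pm}$ appearing on the bottom of $M$ are canonically the same pair as the one produced by surgering $\Sigma$ along $\alpha_i$, so that the 1-handle feet in the two descriptions align. This amounts to tracking $\Sigma$ and $\bal$ through the cut-and-paste while keeping orientations straight, but no new ideas are required beyond the dual 1-handle picture.
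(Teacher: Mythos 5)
Your forward direction is essentially the paper's argument: transport the canonical meridian disk (core of the $2$-handle together with the annulus $\alpha_i\times I$) along $d$, and use $C_+(\bal)\cong C_+(\bdel)$ via $d$.  The reverse direction, however, takes a genuinely different route.  The paper takes a tubular neighborhood $N$ of $\Sigma\cup D_{\bal}$ inside $C(\bdel)$, observes that $N$ is a copy of $C(\bal)$ rel $\Sigma$ and that $C(\bdel)\setminus N$ is (by hypothesis) a product collar, and concludes by collar absorption: the embedding $C(\bal)\cong N\hookrightarrow C(\bdel)$ extends to a diffeomorphism rel $\Sigma$.  You instead decompose both $C(\bal)$ and $C(\bdel)$ as $1$-handle attachments to a product over $\Sigma_{\bal}$ and match the attaching data.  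Both are valid; the paper's is shorter because it never has to compare two $1$-handle decompositions, only absorb a collar on one side.  The one step your sketch most needs to spell out is how you pass from ``$M$ is diffeomorphic to $C_+(\bdel)\times I$ as a sutured manifold'' (an abstract diffeomorphism, which a priori is uncontrolled on $R_-$) to ``$M$ is diffeomorphic to $\Sigma_{\bal}\times I$ by a diffeomorphism that is the \emph{identity} on the bottom $\Sigma_{\bal}$.''  This is true, but it is exactly what makes the $1$-handle feet literally coincide rather than merely correspond up to some self-diffeomorphism of $\Sigma_{\bal}$, and without it the matching of attaching data does not follow; it is the same kind of issue the paper sidesteps by working with the tubular neighborhood $N$ directly.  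Granted that, your framing and orientability remarks for pinning down the $1$-handle attaching maps are fine, and the extension is the identity on the annular pieces of $\Sigma$ because the $1$-handle attaching embeddings can be chosen to agree there.
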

\begin{proof}
Suppose $\bal\sim\bdel$. Then, by definition, there exists a diffeomorphism \( d: C(\bal) \to C(\bdel) \) such that \( d|_{\Sigma} \) is the identity. For each $i$, let $D_{\alpha_i}$ be the core disk of the $2$-handle attached to $\alpha_i$ in $C(\bal)$. Choose tubular neighbourhoods $\nu(D_{\alpha_i})$ such that their images under $d$ are disjoint in $C(\bdel)$ and $C(\bal)\setminus \nu(D_{\bal}) \cong C_+(\bal) \times I$. Since \( d|_{\Sigma} \) is the identity, we have that $\partial(d(D_{\alpha_i})) =\alpha_i$. Thus, $\bal$ consists of meridian curves of $C(\bdel)$. Moreover, cutting $C(\bdel)$ along $\bal$ results in $C(\bdel)\setminus d(\nu(D_{\bal}))$. Since $C(\bal)\setminus \nu(D_{\bal}) \cong C_+(\bal) \times I$ and $C_+(\bal) \cong C_+(\bdel)$, we have that 
$$C(\bdel)\setminus d(\nu(D_{\bal})) \cong C(\bal)\setminus \nu(D_{\bal}) \cong C_+(\bal) \times I \cong C_+(\bdel) \times I$$
as sutured manifolds.

Conversely, suppose that the curves in \( \bal \) are meridian curves of \( C(\bdel) \) such that cutting \( C(\bdel) \) along \( \bal \) results in \( C_+(\bdel) \times I \). Since \( \bal \) consists of disjoint meridian curves, we choose disjoint meridian disks \( D_{\bal} \) for \( \bal \) and take a tubular neighborhood \( N \) of \( \Sigma \cup D_{\bal} \) in \( C(\bdel) \). Then, \( N \) is diffeomorphic to \( C(\bal) \), and \( C(\bdel)\setminus N \) is diffeomorphic to \( C_+(\bdel) \times I \). Thus, there exists an embedding \( \phi: C(\bal) \hookrightarrow C(\bdel) \), which induces a diffeomorphism from \( C(\bal) \) to \( C(\bdel) \) that restricts to the identity on \( \Sigma \), implying that \( \bal \sim \bdel \).
\end{proof}
We say that \( \iso{ \bal } \) is a \emph{cut-system} of \( C(\bdel) \) if \( \bal \sim \bdel \). In particular, we have \( |\bal| = |\bdel| \) (cf. \cite[Definition 2.8]{JTZ}). Equivalently, we will use the following definition, which offers a more direct approach by keeping the sutured compression body $C(\bdel)$ fixed, as follows from Proposition \ref{prop:equiv}.
\begin{definition}
A \emph{cut-system} of \( C(\bdel) \) is an isotopy class of a collection of disjoint meridian curves \( \bal = (\alpha_1, \dots, \alpha_{|\bdel|}) \) on \( \Sigma \), such that cutting \( C(\bdel) \) along \( \bal \) yields a sutured manifold diffeomorphic to the product sutured manifold \( C_+(\bdel) \times I \). We denote a cut-system by \( \iso{ \bal } \) or explicitly by \( \iso{ \alpha_1, \dots, \alpha_{|\bdel|} } \).
\end{definition}

\begin{definition}
A collection of disjoint meridian curves $\left(\alpha_1,\dots, \alpha_k\right)$ is called a \emph{pre-cut-system} if their homology classes are linearly independent in $H_1(\Sigma;\mathbb{Z}/2)$. A meridian curve $\alpha$ is called a \emph{good cut} if its homology class $[\alpha]$ is nonzero in $H_1(\Sigma;\mathbb{Z}/2)$. 
\end{definition}

\section{the $2$-complex of cut-systems and simple moves}\label{x2}
In this section, we construct a 2-complex \( X_2(C(\bdel)) \) associated to the sutured compression body \( C(\bdel) \), and prove that it is both connected and simply-connected. Our approach follows the conventions and methods developed in \cite{Wa1998}.

\begin{definition}
Two cut-systems \( \bu \) and \( \bv \) of \( C(\bdel) \) are said to be \emph{related by a simple move}, denoted by \( \bu \rightarrow \bv \), if there exists a set of disjoint meridian curves \( \{ \alpha, \alpha', \alpha_1, \dots, \alpha_{|\bdel|-1} \} \) such that \( \bu = \iso{ \alpha, \vec{\alpha} } \) and \( \bv = \iso{ \alpha', \vec{\alpha} } \), where \( \vec{\alpha} = (\alpha_1, \dots, \alpha_{|\bdel|-1}) \). For simplicity, we may omit the curves that remain unchanged and denote the simple move $\bu\rightarrow\bv$ by \( \iso{ \alpha } \rightarrow \iso{ \alpha' } \).
\end{definition}

We begin with the construction of the $1$-skeleton of $X_2(C(\bdel))$. The vertices of \( X_2(C(\bdel)) \) correspond to cut-systems of \( C(\bdel) \), and two vertices are connected by an edge if the corresponding cut-systems are related by a simple move. We denote this $1$-skeleton by $X_1(C(\bdel))$.

Next, we define the 2-cells of $X_2(C(\bdel))$. There are two types of triangular edge-paths in $X_1(C(\bdel))$:
\begin{itemize}
\item Type I: the triangle formed by cut-systems $\bv_i=\left\iso{\alpha_i, \vec{\alpha}\right}$ for $i \in \mathbb{Z}_3$.
\item Type II: the triangle formed by cut-systems $\bv_i=\left\iso{\alpha_{i-1}, \alpha_{i+1}, \vec{\alpha}\right}$ for $i \in \mathbb{Z}_3$. 
\end{itemize}
Unlike the case for handlebodies, since a cut-system of $C(\bdel)$ may include separating curves when $\partial \Sigma$ is disconnected (see Figure \ref{attaching_set}), we need to handle the case where $\Sigma$ itself is disconnected during the process of induction. Therefore, we also consider the following type of square loops that did not appear in the $2$-complex of cut-systems for handlebodies in \cite{Wa1998}.

\begin{definition}
A closed edge-path
\[\begin{tikzcd}[row sep=large, column sep=large]
	{\iso{ \alpha_1, \alpha_3, \vec{\alpha} }} & {\iso{ \alpha_1, \alpha_4, \vec{\alpha} }} \\
	{\iso{ \alpha_2, \alpha_3, \vec{\alpha} }} & {\iso{ \alpha_2, \alpha_4, \vec{\alpha} }}
	\arrow[no head, from=1-1, to=1-2]
	\arrow[no head, from=1-2, to=2-2]
	\arrow[no head, from=2-1, to=1-1]
	\arrow[no head, from=2-2, to=2-1]
\end{tikzcd}\]
is called a \emph{self-separated square} if $\{\alpha_1,\alpha_2\}$ and $\{\alpha_3,\alpha_4\}$ are contained in two different connected components of $\Sigma\setminus \vec{\alpha}$. 
\end{definition}
For example, Figure \ref{self_sep_square} gives a self-separated square for the compression body in Figure \ref{boundary}

\begin{figure}[h]
{
   \fontsize{11pt}{11pt}\selectfont
   \def\svgwidth{4.2in}
   \begin{center}
   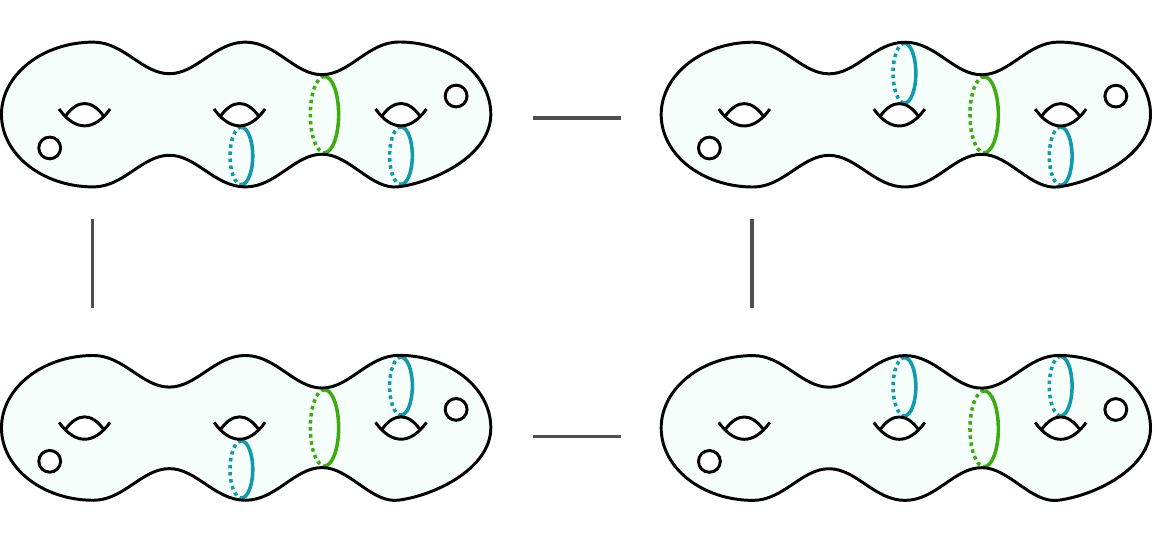
   \end{center}
   \caption{}
   \label{self_sep_square}
}
\end{figure}

We attach a $2$-cell to each triangular edge-path in \( X_1(C(\bdel)) \) and a $2$-cell to each self-separated square in \( X_1(C(\bdel)) \). The resulting $2$-complex, denoted by \( X_2(C(\bdel))\), is called the \emph{$2$-complex of cut-systems and simple moves} associated to the sutured compression body $C(\bdel)$. 

\begin{theorem}\label{thm:xsimpc}
The $2$-complex $X_2(C(\bdel))$ is connected and simply-connected.
\end{theorem}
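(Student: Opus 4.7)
The plan is to establish both connectedness and simple connectedness by induction on $|\bdel|$, modelled on Wajnryb's treatment of handlebodies in \cite{Wa1998} while accommodating the new phenomena that arise when a meridian curve separates the surface $\Sigma$.

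Connectedness reduces to showing that any two cut-systems $\iso{\bal}$ and $\iso{\bal'}$ of $C(\bdel)$ are related by a finite sequence of simple moves. I would argue by induction on the geometric intersection number $|\bal \cap \bal'|$, with the representatives in minimal position. When this intersection is nonzero, a standard innermost-disk surgery on a curve $\alpha' \in \bal'$ against a meridian disk bounded by a curve of $\bal$ produces a new meridian curve with strictly fewer intersections; Proposition \ref{prop:equiv}, together with the linear-independence characterization of Proposition \ref{prop:lid}, shows that replacing $\alpha'$ by this surgered curve yields another cut-system, realizing a single simple move.

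For simple connectedness, the base cases $|\bdel| \le 1$ are handled directly: when $|\bdel| = 0$ the complex is a single point, and when $|\bdel| = 1$ every cut-system consists of one curve, so the Type I and II triangular $2$-cells fill every loop in a routine way. For the inductive step with $|\bdel| \ge 2$, take a loop $\ell$ in $X_1(C(\bdel))$, pick a meridian $\alpha$ that appears in some cut-system along $\ell$, and attempt to homotope $\ell$ within $X_2(C(\bdel))$ into a loop whose every vertex contains $\alpha$. Cutting $C(\bdel)$ along $\alpha$ yields a sutured compression body $C(\bdel')$ with $|\bdel'| = |\bdel| - 1$, and the subcomplex of $X_2(C(\bdel))$ spanned by cut-systems containing $\alpha$ is naturally identified with $X_2(C(\bdel'))$. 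By the inductive hypothesis, the relocated loop bounds a disk in that subcomplex, hence in $X_2(C(\bdel))$.

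The heart of the argument, and its main obstacle, is the relocation step: for each consecutive edge $\bu_i \to \bu_{i+1}$ of $\ell$ whose endpoints do not contain $\alpha$, one must build a disk in $X_2$ bounded by $\bu_i \to \bu_{i+1}$ together with auxiliary simple-move paths from $\bu_i$ and $\bu_{i+1}$ into the subcomplex above. A case analysis, organized by the intersection pattern of $\alpha$ with the vertex curves, shows that almost every such quadrilateral decomposes into Type I and Type II triangles, in parallel with Wajnryb's handlebody argument. The exceptional situation occurs exactly when $\alpha$ together with some of the fixed curves separates $\Sigma$ into two pieces and the four corners of the quadrilateral pair off curves lying in distinct components; the resulting square cannot be triangulated by simple-move triangles and is precisely what the self-separated square $2$-cells fill in. Verifying that this is the only new obstruction, so that the triangular and self-separated square $2$-cells together suffice, is the key new ingredient compared to \cite{Wa1998} and is the crux of the inductive step.
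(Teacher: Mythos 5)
Your overall plan — double induction on $|\bdel|$, adaptation of Wajnryb, and attaching the self-separated square as the one new obstruction — matches the paper's strategy, and you correctly identify where the new $2$-cell must intervene. However, there is a concrete gap in your connectedness argument, and the simple-connectedness outline skips the technical machinery that makes the radius-reduction work.

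The connectedness argument is broken as stated. You claim that an innermost-disk surgery of $\alpha'\in\bal'$ against a meridian disk $D_\alpha$ for some $\alpha\in\bal$ ``yields another cut-system, realizing a single simple move.'' This fails for two reasons. First, the surgered curve is a push-off of a subarc $\varepsilon\subset\alpha$ together with a subarc of $\alpha'$; since $\alpha$ need not be disjoint from the other curves of $\bal'$, the surgered curve can intersect $\bal'\setminus\alpha'$, so replacing $\alpha'$ by it is not a simple move at all. Second, even when the surgered curve is disjoint from the rest of $\bal'$, there is no reason for it to complete $\bal'\setminus\alpha'$ to a cut-system: the surgery replaces $[\alpha']$ by $[\gamma_1]$ or $[\gamma_2]$ with $[\gamma_1]+[\gamma_2]=[\alpha']$, one of which can be nullhomologous or homologically dependent on the remaining curves. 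The paper sidesteps both problems by never asking the surgered curve to be a simple move: in Lemma \ref{lem7} it works with a single good cut at a time, picks the one $\gamma_i$ that is a good cut using $[\gamma_1]+[\gamma_2]=[\beta]\ne 0$, completes $\alpha$ and $\gamma$ (respectively $\beta$ and $\gamma$) to arbitrary cut-systems by the inductive hypothesis on intersection number, and then bridges the two resulting vertices with the common-curve Lemma \ref{lem:common}, which in turn runs the induction on $|\bdel|$. Your scheme of inducting on $|\bal\cap\bal'|$ for whole cut-systems and producing one simple move per step would need a substantially different mechanism to be salvaged.

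For simple-connectedness, you correctly sketch the radius-reduction strategy and the role of the self-separated square, but the step you call ``the heart of the argument'' is doing far more work than acknowledged. In the paper this is not a single quadrilateral argument but a chain: the closed-segment Lemma \ref{lem:seg}, the square Lemma \ref{lem:square0} together with the surface-combinatorics Lemma \ref{prop:for_lem} (which is exactly where the self-separated square appears, when the pairs $\{\alpha,\beta\}$ and $\{\gamma,\delta\}$ live in different components), the parallelogram Lemma \ref{lem11}, the radius-zero Lemma \ref{lem12}, and finally Wajnryb's Lemmas 5 and 13 (adapted as Lemmas \ref{lem5}, \ref{lem13}). Without at least the square and parallelogram lemmas, the claim that the ``relocation'' quadrilaterals decompose into triangles and self-separated squares has no proof. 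If you want to present an alternative argument, the place to focus is Lemma \ref{lem:square0}: show that a square of pairwise-disjoint curves is filled either by triangles (when a bridging curve exists, which Lemma \ref{prop:for_lem} guarantees in a single component) or by a self-separated square (when the two pairs separate into distinct components), and only then feed this into the radius-reduction induction.

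Also, your base case $|\bdel|=1$ is not used in the paper (the induction starts at $|\bdel|=0$), and your claim that the triangular $2$-cells ``fill every loop in a routine way'' for $|\bdel|=1$ is itself a nontrivial assertion — it is essentially the radius-zero case of the general induction, not an independent base case.
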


We divide the proof into two parts: we first show that \( X_2(C(\bdel)) \) is connected, and then establish that it is simply-connected; both proceed by induction on \( |\bdel| \). We start with the proof of connectivity. For the base case $|\bdel|=0$, the complex $X_2(C(\bdel))$ consists of the empty cut, which is connected and simply-connected. For the inductive step, suppose $X_2(C(\bdel))$ is connected for $|\bdel|\le k$; we aim to show that $X_2(C(\bdel))$ is connected for $|\bdel| = k+1$. 

\begin{lemma}\label{lem:common}
Let $\bu,\bv$ be two vertices in $X_2(C(\bdel))$. If $\bu$ and $\bv$ have one or more curves in common, then $\bu$ and $\bv$ are connected by an edge-path in $X_2(C(\bdel))$ where every vertex contains the common curves.
\end{lemma}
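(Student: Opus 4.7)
Let $\bga = (\gamma_1, \dots, \gamma_m)$ denote the common curves of $\bu$ and $\bv$, with $m \geq 1$, and write $\bu = \bga \cup \bbe$ and $\bv = \bga \cup \bbe'$, where $\bbe$ and $\bbe'$ each consist of $|\bdel|-m$ curves. The plan is to induct on $|\bdel|$ by cutting $C(\bdel)$ along $\bga$ to obtain a sutured compression body of strictly smaller rank, apply the inductive hypothesis there, and lift the resulting path back to $X_2(C(\bdel))$ by adjoining $\bga$ to every vertex.

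The first step is to show that cutting $C(\bdel)$ along the common curves $\bga$ produces a sutured compression body with smaller attaching set. Since $\bga \subset \bu$ is a subcollection of a cut-system, its curves are disjoint meridian curves of $C(\bdel)$; let $C'$ denote the sutured manifold obtained by cutting $C(\bdel)$ along $\bga$, with negative boundary $\Sigma'$ (the surface obtained from $\Sigma$ by surgery along $\bga$), and with $C_+$ and suture unchanged. Because $\bu$ is a cut-system of $C(\bdel)$, further cutting $C'$ along $\bbe$ produces the product $C_+(\bdel) \times I$. By Proposition \ref{prop:equiv} applied to $\Sigma'$ with attaching set $\bdel' := \bbe$, this identifies $C' \cong C(\bdel')$ with $|\bdel'| = |\bdel|-m$, and exhibits $\bbe$ as a cut-system of $C(\bdel')$. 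The same staged-cutting argument applied to $\bv$ shows that $\bbe'$ is also a cut-system of $C(\bdel')$.

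By the inductive hypothesis, $X_2(C(\bdel'))$ is connected, so $\iso{\bbe}$ and $\iso{\bbe'}$ are joined by an edge-path therein. I then lift this path to $X_2(C(\bdel))$ via adjunction of $\bga$: for any cut-system $\bw'$ of $C(\bdel')$, its curves are meridian curves of $C(\bdel')\subset C(\bdel)$ disjoint from $\bga$, and cutting $C(\bdel)$ along $\bw'\cup\bga$ factors as first cutting along $\bga$ (yielding $C(\bdel')$) and then along $\bw'$ (yielding a product), so $\bw'\cup\bga$ is a cut-system of $C(\bdel)$. A simple move in $X_1(C(\bdel'))$, given by replacing one curve of $\bw'$ while keeping the others fixed, translates verbatim to a simple move in $X_1(C(\bdel))$ because the unchanged part merely gains the constant block $\bga$. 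Concatenating these lifted edges produces the required edge-path from $\bu$ to $\bv$ in $X_2(C(\bdel))$, every vertex of which contains $\bga$.

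The main subtlety is the structural correspondence in the second paragraph — that cutting along a partial cut-system yields a smaller sutured compression body and that cut-systems of $C(\bdel)$ containing $\bga$ are in bijection with cut-systems of $C(\bdel')$ via this adjunction. This follows cleanly from Proposition \ref{prop:equiv} and the staged-cutting description, but deserves care because $\Sigma'$ may be disconnected; this is precisely the phenomenon that motivates including the self-separated square among the $2$-cells of $X_2$, although that cell itself plays no role in the present lemma since only the $1$-skeleton is invoked.
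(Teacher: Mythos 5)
Your proposal is correct and follows essentially the same route as the paper's proof: cut $C(\bdel)$ along the common curves to obtain a smaller sutured compression body $C(\bdel')$, apply the connectivity induction hypothesis there, and lift the resulting edge-path back by adjoining the common curves to every vertex. The additional verifications you supply (that the restricted collections are cut-systems of $C(\bdel')$ and that simple moves lift) are implicit in the paper's shorter argument but are the right points to check.
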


\begin{proof}
Let $\bbe$ be common curves between $\bu$ and $\bv$. Cut the sutured compression body $C(\bdel)$ along $\bbe$ and obtain a new sutured compression body $C(\bdel')$. The cut-system $\bu$ restricts to a cut-system $\bu'$ on $C(\bdel')$, and the cut-system $\bv$ restricts to a cut-system $\bv'$ on $C(\bdel')$. By the induction hypothesis, since $|\bdel'|=|\bdel|-|\bbe|<|\bdel|$, there exists an edge-path $\bp'$ between $\bu'$ and $\bv'$ in $X_2(C(\bdel'))$. We augment the edge-path $\bp'$ by $\bbe$ and obtain an edge-path $\bp$ from the cut-system $\bu = \iso{ \bu', \bbe }$ to the cut-system $\bv = \iso{ \bv', \bbe }$ in $X_2(C(\bdel))$, such that each vertex of $\bp$ contains the common curves $\bbe$.
\end{proof}

\begin{lemma}\label{lem7}
Let $\alpha$ and $\beta$ be good cuts of $C(\bdel)$. Then, there exist vertices $\bv_1$, $\bv_2$ in $X_2(C(\bdel))$ (possibly $\bv_1=\bv_2$) such that $\alpha\in \bv_1$, $\beta\in \bv_2$ and $\bv_1$ and $\bv_2$ are connected by an edge-path in $X_2(C(\bdel))$.
\end{lemma}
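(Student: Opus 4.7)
The plan is to combine a homological extension argument with induction on the geometric intersection number $i(\alpha,\beta)$. As preliminary, I would prove an extension lemma: every good cut $\gamma$ of $C(\bdel)$ extends to a cut-system, and the extending curves can be chosen disjoint from any prescribed auxiliary curve on $\Sigma$. Starting from the pre-cut-system $\{\gamma\}$, I greedily add disjoint meridian curves whose $H_1$-classes remain independent. By Proposition \ref{prop:lid} any maximal such collection has size $|\bdel|$ (the rank of the kernel of $H_1(\Sigma;\mathbb{Z}/2)\to H_1(C(\bdel);\mathbb{Z}/2)$), and the standard fact that a maximal system of disjoint compression disks cuts a compression body into a product sutured manifold, combined with Proposition \ref{prop:equiv}, identifies such a maximal collection with a cut-system. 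General position supplies the avoidance clause.

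In the base case $i(\alpha,\beta)=0$ I may isotope so that $\alpha\cap\beta=\emptyset$. If $[\alpha]$ and $[\beta]$ are linearly independent in $H_1(\Sigma;\mathbb{Z}/2)$, then $\{\alpha,\beta\}$ is a pre-cut-system; extend to a cut-system $\bv$ and take $\bv_1=\bv_2=\bv$. If instead $[\alpha]=[\beta]$ (the only remaining possibility, since both classes are nonzero), extend $\{\alpha\}$ to a cut-system $\bv_1=\iso{\alpha,\vec{\alpha}}$ with $\vec{\alpha}$ also disjoint from $\beta$. Then $\iso{\beta,\vec{\alpha}}$ is again a pre-cut-system of full size $|\bdel|$, hence a cut-system $\bv_2$, and $\bv_1\to\bv_2$ is a single simple-move edge in $X_1(C(\bdel))$.

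For the inductive step $i(\alpha,\beta)>0$, the goal is to produce an auxiliary good cut $\gamma$ with $i(\alpha,\gamma)$ and $i(\gamma,\beta)$ both strictly less than $i(\alpha,\beta)$, so that the inductive hypothesis applied to $(\alpha,\gamma)$ and $(\gamma,\beta)$ concatenates to the desired path. I would choose meridian disks $D_\alpha,D_\beta$ with $|D_\alpha\cap D_\beta|$ minimal; by irreducibility of $C(\bdel)$ (from the proof of Lemma 2.9 of \cite{JTZ}) the intersection is a disjoint union of arcs. An outermost such arc $a$ on $D_\beta$ bounds a sub-disk $D'\subset D_\beta$ with $\partial D'=a\cup b$, $b\subset\beta$; cutting both $D_\alpha$ and $D_\beta$ along $a$ and pairing the four resulting sub-disks yields two meridian disks $E_1,E_2$ in $C(\bdel)$ whose boundary curves $\gamma_1,\gamma_2\subset\Sigma$ satisfy $[\gamma_1]+[\gamma_2]=[\alpha]+[\beta]$ in $H_1(\Sigma;\mathbb{Z}/2)$ and, after smoothing at the two endpoints of $a$ lying in $\alpha\cap\beta$, are simple closed. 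In the generic subcase $[\alpha]\neq[\beta]$, at least one $\gamma_i$ has nonzero $H_1$-class and hence is a good cut with strictly smaller geometric intersection with both $\alpha$ and $\beta$. The degenerate subcase $[\alpha]=[\beta]$ (where $[\gamma_1]=[\gamma_2]$ may vanish) is handled by first detouring through an auxiliary good cut $\delta$ of a different homology class, produced from the extension lemma as a companion of $\alpha$ in a cut-system and chosen disjoint from both $\alpha$ and $\beta$, and then applying the generic subcase to $(\alpha,\delta)$ and $(\delta,\beta)$.

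The main obstacle I expect is the surgery step: verifying carefully that $\gamma_1,\gamma_2$ are genuine simple closed meridian curves of $C(\bdel)$ and that both intersection numbers $i(\alpha,\gamma_i)$ and $i(\gamma_i,\beta)$ are strictly smaller than $i(\alpha,\beta)$, so that the induction terminates, together with handling the degenerate homology case via an auxiliary detour. The essential inputs are the minimality of $|D_\alpha\cap D_\beta|$, the outermost-arc technique, irreducibility of $C(\bdel)$, and Propositions \ref{prop:lid} and \ref{prop:equiv}.
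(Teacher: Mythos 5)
Your overall strategy --- induction on $i(\alpha,\beta)$, an outermost-arc surgery on a pair of meridian disks, and a mod-$2$ homology count to detect a good cut among the surgered curves --- is the same as the paper's, and your base case matches the paper's in substance (though the paper obtains the auxiliary curves disjoint from both $\alpha$ and $\beta$ by cutting $C(\bdel)$ along $\alpha\cup\beta$ and taking cut-systems of the resulting pieces; ``general position'' cannot supply the avoidance clause you want, since it yields transversality to a prescribed curve, not disjointness). The genuine gap is in the surgery of the inductive step. Cutting $D_\alpha$ into $A_1,A_2$ and $D_\beta$ into $B_1,B_2$ along $a$ and pairing $A_i$ with $B_j$ does not produce two embedded meridian disks with simple boundary: if $B_1=D'$ is the outermost piece, then $A_1\cup B_1$ is embedded with simple boundary $\alpha_1\cup b$, but $A_2\cup B_2$ still contains all the remaining arcs of $D_\alpha\cap D_\beta$, and its boundary $\alpha_2\cup\beta_2$ has the remaining $n-2$ points of $\alpha\cap\beta$ as self-intersections, so it is neither embedded nor simple when $n>1$. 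Consequently the identity $[\gamma_1]+[\gamma_2]=[\alpha]+[\beta]$ does not guarantee that the curve with nonzero class is the usable one. The correct surgery, as in the paper (and in Wajnryb), uses the single outermost sub-disk $D'$ paired with \emph{each} half of the other disk: $\gamma_1=\alpha_1\cup b$ and $\gamma_2=\alpha_2\cup b$. Both are then simple meridians with $|\gamma_i\cap\alpha|=0$ and $|\gamma_i\cap\beta|<n$, and $[\gamma_1]+[\gamma_2]=[\alpha]\neq 0$, so at least one is always a good cut and no degenerate subcase arises.

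Your proposed repair of the degenerate subcase is itself broken: when $\alpha$ and $\beta$ intersect, a good cut $\delta$ disjoint from both need not exist (if it always did, the lemma would follow immediately from the base case applied twice together with Lemma~\ref{lem:common}), and neither your extension lemma nor general position produces one. Two smaller points: concatenating the paths obtained for $(\alpha,\gamma)$ and $(\gamma,\beta)$ requires joining two different vertices that both contain $\gamma$, which is exactly where the paper invokes Lemma~\ref{lem:common}; and your assertion that any pre-cut-system of size $|\bdel|$ is automatically a cut-system is a nontrivial fact that deserves at least a sentence of justification, although the paper uses it implicitly as well.
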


\begin{proof}
We prove by induction on the intersection number $n$ between the curves $\alpha$ and $\beta$. First, consider the base case where $\alpha$ and $\beta$ are disjoint. If $(\alpha,\beta)$ is a pre-cut-system, we complete it to a cut-system $\bv\in X_2(C(\bdel))$. Thus, we choose $\bv_1=\bv_2=\bv$. 

Otherwise, cut along $\alpha$ and $\beta$. Since both $\alpha$ and $\beta$ are good cuts, we obtain a handlebody $H$ and a sutured compression body $C$. Choose a cut-system for $H$ as a handlebody, and a cut-sysem for $C$ as a sutured compression body. Together, we get disjoint meridian curves $\bga$. Cut the sutured compression body $C(\bdel)$ along $\{\alpha,\beta, \bga\}$, and end up with the product sutured manifold $C_+(\bdel)\times I$, together with a ball that comes from $H$. Thus, we have constructed two cut-systems $\bv_1=\iso{ \bga, \alpha }$ and $\bv_2=\iso{ \bga, \beta }$ that are connected by an edge in $X_2(C(\bdel))$.

Now, assume that \( n > 0 \). Since both \( \alpha \) and \( \beta \) are meridian curves of \( C(\bdel) \), they bound meridian disks \( D_{\alpha} \) and \( D_{\beta} \), respectively, which intersect transversely. Since \( C(\bdel) \) is irreducible, we can get rid of all circular intersections by moving the disks across 3-balls corresponding to innermost circles. Let \( d \) be an innermost arc of intersection on \( D_{\alpha} \). The arc \( d \) separates \( D_{\alpha} \) into two disks; let \( D_{\alpha}' \) denote the one whose interior is disjoint from \( D_{\beta} \). Let \( \varepsilon \) be the arc on \( \alpha \) such that \( d \cup \varepsilon \) bounds \( D_{\alpha}' \). See Figure \ref{inner_d}.

\begin{figure}[h]
{
   \fontsize{12pt}{11pt}\selectfont
   \def\svgwidth{2.5in}
   \begin{center}
\begingroup%
  \makeatletter%
  \providecommand\color[2][]{%
    \errmessage{(Inkscape) Color is used for the text in Inkscape, but the package 'color.sty' is not loaded}%
    \renewcommand\color[2][]{}%
  }%
  \providecommand\transparent[1]{%
    \errmessage{(Inkscape) Transparency is used (non-zero) for the text in Inkscape, but the package 'transparent.sty' is not loaded}%
    \renewcommand\transparent[1]{}%
  }%
  \providecommand\rotatebox[2]{#2}%
  \newcommand*\fsize{\dimexpr\f@size pt\relax}%
  \newcommand*\lineheight[1]{\fontsize{\fsize}{#1\fsize}\selectfont}%
  \ifx\svgwidth\undefined%
    \setlength{\unitlength}{323.73263862bp}%
    \ifx\svgscale\undefined%
      \relax%
    \else%
      \setlength{\unitlength}{\unitlength * \real{\svgscale}}%
    \fi%
  \else%
    \setlength{\unitlength}{\svgwidth}%
  \fi%
  \global\let\svgwidth\undefined%
  \global\let\svgscale\undefined%
  \makeatother%
  \begin{picture}(1,0.63677124)%
    \lineheight{1}%
    \setlength\tabcolsep{0pt}%
    \put(0,0){\includegraphics[width=\unitlength,page=1]{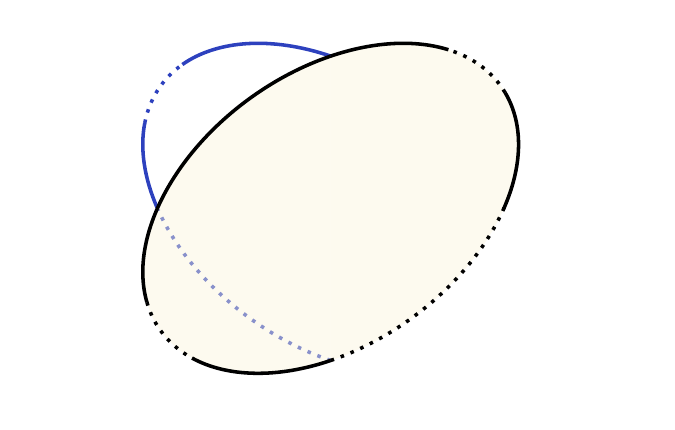}}%
    \put(0.24308191,0.58038765){\color[rgb]{0.17647059,0.25490196,0.74509804}\makebox(0,0)[lt]{\lineheight{1.25}\smash{\begin{tabular}[t]{l}$\alpha$\end{tabular}}}}%
    \put(0,0){\includegraphics[width=\unitlength,page=2]{inner_d.pdf}}%
    \put(0.42996459,0.27575831){\color[rgb]{0.25882353,0.46666667,0.16078431}\makebox(0,0)[lt]{\lineheight{1.25}\smash{\begin{tabular}[t]{l}$d$\end{tabular}}}}%
    \put(0.73318193,0.04817083){\color[rgb]{0.94509804,0.44313725,0.20784314}\makebox(0,0)[lt]{\lineheight{1.25}\smash{\begin{tabular}[t]{l}$\varepsilon$\end{tabular}}}}%
    \put(0.57633406,0.27618322){\color[rgb]{0.90980392,0.24705882,0.42352941}\makebox(0,0)[lt]{\lineheight{1.25}\smash{\begin{tabular}[t]{l}$D_{\alpha}^{\prime}$\end{tabular}}}}%
    \put(0.24351975,0.03427473){\color[rgb]{0,0,0}\makebox(0,0)[lt]{\lineheight{1.25}\smash{\begin{tabular}[t]{l}$\beta$\end{tabular}}}}%
    \put(0,0){\includegraphics[width=\unitlength,page=3]{inner_d.pdf}}%
  \end{picture}%
\endgroup%

   \end{center}
   \caption{}
   \label{inner_d}
}
\end{figure}

The endpoints of $\varepsilon$ divide the meridian curve \( \beta \) into two subarcs, which we denote by \( \beta_1 \) and \( \beta_2 \). Let $\gamma_i$ be a push-off of $\beta_i+\varepsilon$ such that $|\gamma_i \cap \beta| = 0$ and $|\gamma_i \cap \alpha|< n$ for $i\in\{1,2\}$. Since $\beta$ is a good cut, we have that $[\gamma_1] + [\gamma_2] = [\beta] \neq 0 \in H_1(\Sigma;\mathbb{Z}/2)$. It follows that at least one of \(\gamma_i\) represents a nonzero homology class and therefore is a good cut. We denote this good cut by \(\gamma\). 

By the induction hypothesis, there exist vertices \(\bv_1\) and \(\bv_3\) in \(X_2(C(\bdel))\) containing \(\alpha\) and \(\gamma\), respectively, and connected by an edge-path in \(X_2(C(\bdel))\). Similarly, there exist vertices \(\bv_2\) and \(\bv_4\) containing \(\beta\) and \(\gamma\), also connected by an edge-path. Finally, the vertices $\bv_3$ and $\bv_4$ are connected by an edge-path by Lemma \ref{lem:common}. 
\[\begin{tikzcd}[row sep=large]
	{\alpha\in\bv_1} &&& {\bv_2 \ni \beta} \\
	\\
	{\gamma\in\bv_3} &&& {\bv_4 \ni \gamma}
	\arrow["{{\mathrm{induction}}}", shift right=4, squiggly, no head, from=1-4, to=3-4]
	\arrow["{{\mathrm{induction}}}", shift right=4, squiggly, no head, from=3-1, to=1-1]
	\arrow["{{\mathrm{Lemma\ \ref{lem:common}}}}"', squiggly, no head, from=3-1, to=3-4]
\end{tikzcd}\]

\end{proof}

\begin{proposition}\label{prop8}
The complex \(X_2(C(\bdel))\) is connected.
\end{proposition}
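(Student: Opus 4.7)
The plan is to assemble Lemmas \ref{lem:common} and \ref{lem7} to complete the inductive step already set up. Given two cut-systems $\bu$ and $\bv$ of $C(\bdel)$ with $|\bdel|=k+1$, my goal is to connect them by an edge-path in $X_2(C(\bdel))$. I would first observe that every curve appearing in a cut-system is automatically a good cut: by Proposition \ref{prop:equiv}, the curves of a cut-system form an attaching set on $\Sigma$ equivalent to $\bdel$, and by Proposition \ref{prop:lid}, the homology classes of the curves in an attaching set are linearly independent in $H_1(\Sigma;\mathbb{Z}/2)$, so in particular each individual curve represents a nonzero class.

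Next, I would pick any $\alpha \in \bu$ and any $\beta \in \bv$; both are good cuts by the preceding observation. Lemma \ref{lem7} then produces cut-systems $\bv_1 \ni \alpha$ and $\bv_2 \ni \beta$ connected by an edge-path in $X_2(C(\bdel))$. Since $\bu$ and $\bv_1$ share the common curve $\alpha$, Lemma \ref{lem:common} supplies an edge-path between them; likewise, Lemma \ref{lem:common} yields an edge-path between $\bv$ and $\bv_2$ through the shared curve $\beta$. Concatenating the three paths $\bu \rightsquigarrow \bv_1 \rightsquigarrow \bv_2 \rightsquigarrow \bv$ finishes the argument.

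There is no substantive obstacle remaining at this stage. The hard work has already been carried out: Lemma \ref{lem7} handles the nontrivial induction on the geometric intersection number of two good cuts combined with innermost-arc surgery on meridian disks, while Lemma \ref{lem:common} invokes the induction hypothesis on the smaller sutured compression body obtained after cutting along shared curves. The only additional ingredient needed for the proposition itself is the homological observation that each curve in a cut-system is a good cut, which is what makes Lemma \ref{lem7} directly applicable to the chosen pair $(\alpha,\beta)$; the rest is a clean three-segment concatenation.
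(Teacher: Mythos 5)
Your proof is correct and follows essentially the same three-segment concatenation as the paper's own argument: apply Lemma~\ref{lem7} to $\alpha\in\bu$ and $\beta\in\bv$ to get a connected pair of vertices containing $\alpha$ and $\beta$, then use Lemma~\ref{lem:common} twice to bridge the remaining gaps. Your explicit verification that every curve in a cut-system is a good cut (via Propositions~\ref{prop:equiv} and~\ref{prop:lid}) is a small but welcome addition that the paper leaves implicit when invoking Lemma~\ref{lem7}.
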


\begin{proof}
Let $\bv_1$ be a vertex of \(X_2(C(\bdel))\) containing some curve $\alpha$ and $\bv_2$ be a vertex of \(X_2(C(\bdel))\) containing some curve $\beta$. By Lemma \ref{lem7}, there exist other vertices $\bv_3$ and $\bv_4$ (possibly $\bv_3=\bv_4$) connected by an edge-path and containing $\alpha$ and $\beta$ respectively. By Lemma \ref{lem:common}, we can connect $\bv_1$ to $\bv_3$ and $\bv_4$ to $\bv_2$ by edge-paths in \(X_2(C(\bdel))\).
\[\begin{tikzcd}[row sep=large]
	{\alpha\in\bv_1} &&& {\bv_2 \ni \beta} \\
	\\
	{\alpha\in\bv_3} &&& {\bv_4 \ni \beta}
	\arrow["{\mathrm{Lemma\ \ref{lem:common}}}", shift right=4, squiggly, no head, from=1-4, to=3-4]
	\arrow["{\mathrm{Lemma\ \ref{lem:common}}}", shift right=4, squiggly, no head, from=3-1, to=1-1]
	\arrow["{\mathrm{Lemma\ \ref{lem7}}}"', squiggly, no head, from=3-1, to=3-4]
\end{tikzcd}\]
\end{proof}

We proceed to prove that \( X_2(C(\bdel)) \) is simply-connected by induction on \( |\bdel| \). Following the approach in \cite{Wa1998}, we aim to show that any closed edge-path can be decomposed into a sum of triangles and squares. We choose a spanning tree for the closed edge-path and then fix a specific collection of curves to represent each vertex, such that every edge in the spanning tree is given by \( \iso{ \alpha } \rightarrow \iso{ \alpha' } \). We also assume that all curves in these collections intersect transversely at a finite number of points, with no point belonging to more than two curves. Throughout the proof, we will introduce new curves subject to the same conditions.
\begin{definition}
A \emph{segment} is an edge-path in which all vertices share a common curve. Specifically, an edge-path \( \bp \) is called an \( \iso{ \vec{\alpha} } \)-segment if all vertices of \( \bp \) contain \( \vec{\alpha} \), where \( \vec{\alpha} \) is a non-empty set of curves.
\end{definition}
In the following lemmas, we show that certain types of closed edge-paths are null-homotopic. These will serve as building blocks for decomposing general closed edge-paths.

\begin{lemma}\label{lem:seg}
A closed segment in $X_2(C(\bdel))$ is null-homotopic.
\end{lemma}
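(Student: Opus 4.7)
The plan is to induct on $|\bdel|$ and reduce the statement to the inductive hypothesis on a smaller compression body. Let $\bp$ be a closed $\iso{\vec{\alpha}}$-segment in $X_2(C(\bdel))$, so that $\vec{\alpha}$ is nonempty and every vertex of $\bp$ contains $\vec{\alpha}$. First I would cut $C(\bdel)$ along the meridian curves $\vec{\alpha}$ to obtain a sutured compression body $C(\bdel')$ with $|\bdel'| = |\bdel| - |\vec{\alpha}| < |\bdel|$, so that the induction hypothesis applies to $C(\bdel')$.

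Next, each vertex $\iso{\vec{\alpha}, \bu_i}$ of $\bp$ restricts to a cut-system $\iso{\bu_i}$ of $C(\bdel')$, and each edge of $\bp$ is a simple move that must change a curve outside $\vec{\alpha}$, hence restricts to a simple move in $X_1(C(\bdel'))$. Thus $\bp$ projects to a closed edge-path $\bp'$ in $X_1(C(\bdel'))$. By the inductive hypothesis, $X_2(C(\bdel'))$ is simply-connected, so $\bp'$ bounds a disk made up of Type I and Type II triangles and self-separated squares of $X_2(C(\bdel'))$. I would then lift this null-homotopy back to $X_2(C(\bdel))$ by augmenting every vertex of every filling $2$-cell with $\vec{\alpha}$: Type I and Type II triangles pull back to triangles of the same type, since the defining combinatorial pattern of the cut-systems is preserved under adding a disjoint collection of curves.

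The main obstacle, and the only nontrivial verification, is to check that self-separated squares lift correctly. Here I would use the observation that cutting $\Sigma$ along $\vec{\alpha}$ produces a surface whose connected components are canonically in bijection with the components of $\Sigma \setminus \vec{\alpha}$ (the surgery attaches disks within single components, not across them). Consequently, if $\{\alpha_1,\alpha_2\}$ and $\{\alpha_3,\alpha_4\}$ lie in different components of the cut surface minus the remaining auxiliary curves, they also lie in different components of $\Sigma \setminus (\vec{\alpha} \cup \vec{\beta})$, so the self-separated condition pulls back. Combining the lifts of all triangular and square $2$-cells yields a filling of $\bp$ in $X_2(C(\bdel))$, so $\bp$ is null-homotopic.
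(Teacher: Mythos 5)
Your proof is correct and follows essentially the same route as the paper: induct on $|\bdel|$, cut along the common curve(s), project the closed segment to a closed edge-path in $X_2(C(\bdel'))$, fill it by the induction hypothesis, and augment the filling $2$-cells back with the common curves. The paper's version is terser (it cuts along a single common curve $\alpha$ and does not spell out why the augmented $2$-cells remain $2$-cells of $X_2(C(\bdel))$), so your check that self-separated squares pull back is a reasonable added detail rather than a divergence.
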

\begin{proof}
Let \(\bp\) be a closed \(\iso{ \alpha }\)-segment. We cut the sutured compression body $C(\bdel)$ along the meridian curve \(\alpha\), and obtain a new sutured compression body \(C(\bdel')\) with $|\bdel'| = |\bdel|-1$. {Remove \(\alpha\) from each vertex of \(\bp\), and obtain a closed edge-path \(\bp'\) in \(X_2(C(\bdel'))\).} By the induction hypothesis, the edge-path \(\bp'\) is a sum of 2-cells in \(X_2(C(\bdel'))\). Augmenting these 2-cells with the curve \(\alpha\) shows that the closed segment \(\bp\) is null-homotopic in \(X_2(C(\bdel))\).
\end{proof}
\begin{lemma}\label{lem:square0}
The closed path $\bp=\iso{ \alpha, \gamma } \rightarrow \iso{ \beta, \gamma } \rightarrow \iso{ \beta, \delta} \rightarrow \iso{ \alpha, \delta} \rightarrow \iso{ \alpha, \gamma }$ is null-homotopic in $X_2(C(\bdel))$, where the meridian curves $\alpha,\beta,\gamma$, and $\delta$ are pairwise disjoint.
\end{lemma}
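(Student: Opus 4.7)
The approach is a case analysis on $|\bdel|$ and on the homology of the four named curves. If $|\bdel|\geq 3$, then the four cut-systems of $\bp$ share an implicit common curve $\eta$ from the unwritten context, so $\bp$ is a closed $\iso{\eta}$-segment and Lemma~\ref{lem:seg} applies. Assume henceforth that $|\bdel|=2$. If $\{\alpha,\beta\}$ and $\{\gamma,\delta\}$ lie in different connected components of $\Sigma$, then $\bp$ is a self-separated square by definition, and hence bounds one of the attached $2$-cells of $X_2(C(\bdel))$; so I may further assume that all four named curves lie in the same component of $\Sigma$.

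If $[\alpha]\neq[\beta]$ in $H_1(\Sigma;\mathbb{F}_2)$, then $U:=\iso{\alpha,\beta}$ is a pre-cut-system of size $|\bdel|=2$ and hence a cut-system. Labeling the vertices of $\bp$ as $V_1=\iso{\alpha,\gamma}$, $V_2=\iso{\beta,\gamma}$, $V_3=\iso{\beta,\delta}$, $V_4=\iso{\alpha,\delta}$, the triples $(V_1,V_2,U)$ and $(V_3,V_4,U)$ are Type~II triangles over the pairwise-disjoint curve-triples $\{\alpha,\beta,\gamma\}$ and $\{\alpha,\beta,\delta\}$. Applying these two $2$-cells homotopes $\bp$ to the concatenation of a closed $\iso{\alpha}$-segment through $V_1,U,V_4$ and a closed $\iso{\beta}$-segment through $V_2,U,V_3$, each null-homotopic by Lemma~\ref{lem:seg}. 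Symmetrically, if $[\gamma]\neq[\delta]$, I would instead take $U:=\iso{\gamma,\delta}$ together with the two Type~II triangles on $\{\alpha,\gamma,\delta\}$ and $\{\beta,\gamma,\delta\}$.

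The remaining subcase is $[\alpha]=[\beta]$ and $[\gamma]=[\delta]$. Since the $2$-dimensional $\mathbb{F}_2$-span of $[\bdel]$ in $H_1(\Sigma)$ has three nonzero classes but only two of them are realized by $\alpha,\beta,\gamma,\delta$, the third class $[\alpha]+[\gamma]$ is unrealized. I would construct an auxiliary good cut $\mu$ with $[\mu]=[\alpha]+[\gamma]$ disjoint from $\alpha,\beta,\gamma,\delta$, for example as a push-off of the band-sum of $\alpha$ and $\gamma$ along an arc in $\Sigma\setminus(\beta\cup\delta)$. Then $\iso{\mu,x}$ is a cut-system for each $x\in\{\alpha,\beta,\gamma,\delta\}$. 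Setting $W_x:=\iso{\mu,x}$, four Type~I triangles $(V_i,V_{i+1},W_c)$ with $c$ the common curve of $V_i,V_{i+1}$ subdivide each edge of $\bp$ through a $W$-vertex, and four Type~II triangles $(V_i,W_x,W_y)$ with $V_i=\iso{x,y}$ eliminate each corner. What remains is the closed $\iso{\mu}$-segment $W_\gamma\to W_\beta\to W_\delta\to W_\alpha\to W_\gamma$, null-homotopic by Lemma~\ref{lem:seg}. I expect the main obstacle to be the existence of $\mu$ in this last subcase: verifying that a suitable band-sum arc between $\alpha$ and $\gamma$ lies in $\Sigma\setminus(\beta\cup\delta)$ reduces to a component-connectivity check on how the four pairwise-disjoint curves sit inside the same component of $\Sigma$ once $\bp$ is known not to be self-separated.
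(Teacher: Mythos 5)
Your case analysis and the decompositions you give in each case track the paper's proof almost exactly: the segment case for $|\bdel|>2$, the self-separated square, the case where $\iso{\alpha,\beta}$ is a cut-system (equivalently $[\alpha]\neq[\beta]$, since both classes are nonzero) handled by triangles through $\iso{\alpha,\beta}$, and the final case $[\alpha]=[\beta]$, $[\gamma]=[\delta]$ handled by an auxiliary curve in the class $[\alpha]+[\gamma]$ together with a ring of Type~I and Type~II triangles around an inner closed segment. The paper draws this last decomposition as Wajnryb's ten-triangle figure, splitting your inner $\iso{\mu}$-segment into two further triangles rather than invoking Lemma~\ref{lem:seg}, but that is the same decomposition.

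The one genuine gap is the step you yourself flag: the existence of $\mu$. This is precisely the content of the paper's Lemma~\ref{prop:for_lem}, which is stated and proved separately before the proof of Lemma~\ref{lem:square0}, and it is not a routine connectivity check. What is true --- and what the paper proves by cutting along $\alpha\cup\gamma$ and analyzing the cases of $1$, $2$, or $3$ resulting components --- is that \emph{some} homologically independent pair among $\{\alpha,\beta,\gamma,\delta\}$ appears, each with multiplicity one, on the boundary of a single component $F$ of $\Sigma'\setminus(\alpha\cup\beta\cup\gamma\cup\delta)$; the band sum is then taken inside $F$. Your stronger claim, that $\alpha$ and $\gamma$ specifically can be band-summed along an arc missing $\beta\cup\delta$ (and, as you also need, missing $\alpha\cup\gamma$ in its interior), can fail: in the paper's three-component case the only available pair may be, say, $\beta$ and $\delta$. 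Since $[\alpha]+[\gamma]=[\beta]+[\delta]=[\alpha]+[\delta]=[\beta]+[\gamma]$ in this subcase, whichever pair works produces a $\mu$ in the desired class, disjoint from all four curves and forming a cut-system with each, so your decomposition goes through unchanged once the existence statement is proved in this weaker form.
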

As a first step toward the proof, we establish the following intermediate lemma.
\begin{lemma}\label{prop:for_lem}
Let \(\alpha, \beta, \delta, \gamma\) be four disjoint curves on a connected surface \(\Sigma\) such that their homology classes in \(H_1(\Sigma; \mathbb{Z}/2)\) are pairwise linearly independent except for the pairs \((\alpha, \beta)\) and \((\gamma, \delta)\). Then, there exists a connected component \(F\) of  
\[ \Sigma_0 := \Sigma \setminus (\alpha \cup \beta \cup \gamma \cup \delta) \]  
such that the boundary \(\partial F\) contains two curves from \(\{\alpha, \beta, \gamma, \delta\}\) that are linearly independent in \(H_1(\Sigma; \mathbb{Z}/2)\).
\end{lemma}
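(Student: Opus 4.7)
My plan is to argue by contradiction, using the connectedness of $\Sigma$ to produce a clash with the assumption that every component of $\Sigma_0$ has a uniform-type collection of boundary curves. Suppose the conclusion fails. Then for every component $F$ of $\Sigma_0$, any two curves from $\{\alpha,\beta,\gamma,\delta\}$ appearing on $\partial F$ are linearly dependent in $H_1(\Sigma;\mathbb{Z}/2)$. Since the only dependencies among the four classes are $[\alpha]=[\beta]$ and $[\gamma]=[\delta]$, each component $F$ must fall into one of three types: \emph{type A} if $\partial F$ meets $\alpha\cup\beta$ but not $\gamma\cup\delta$; \emph{type B} if $\partial F$ meets $\gamma\cup\delta$ but not $\alpha\cup\beta$; or \emph{type 0} if $\partial F$ meets none of the four curves.

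Next I would rule out type-0 components. If $F$ were of type 0, then the closure $\overline{F}$ in $\Sigma$ would add no points from the four curves, so $\overline{F}=F$. Hence $F$ would be simultaneously open (as a component of the open set $\Sigma_0$) and closed in $\Sigma$, forcing $F=\Sigma$ by connectedness; this contradicts $\alpha\subset\Sigma\setminus F$. I would also observe that both A and B types are nonempty: the curve $\alpha$ lies on $\partial F$ for some component $F$, which must be of type A, and analogously $\gamma$ forces the existence of a type-B component.

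I would then form the closed sets $\Sigma_A:=\bigcup_{F\text{ of type A}}\overline{F}$ and $\Sigma_B:=\bigcup_{F\text{ of type B}}\overline{F}$, which cover $\Sigma$ by the previous step. To see that they are disjoint, note that any putative common point $p\in\Sigma_A\cap\Sigma_B$ lies in the closure of both an A-component $F_A$ and a B-component $F_B$, so $p$ is either on a curve belonging to both $\partial F_A$ and $\partial F_B$, or on $\partial\Sigma$ and belonging to both components. The first possibility is excluded because $\partial F_A\cap(\alpha\cup\beta\cup\gamma\cup\delta)\subset\alpha\cup\beta$ while $\partial F_B\cap(\alpha\cup\beta\cup\gamma\cup\delta)\subset\gamma\cup\delta$, and these sets of curves are disjoint. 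The second is excluded because each point of $\partial\Sigma$ admits a half-disc neighbourhood missing the interior curves, hence lying in a single component of $\Sigma_0$. Thus $\Sigma=\Sigma_A\sqcup\Sigma_B$ is a non-trivial disconnection, contradicting the connectedness of $\Sigma$. Consequently, some component $F$ must carry a pair of curves, one from $\{\alpha,\beta\}$ and one from $\{\gamma,\delta\}$, on its boundary, and such a pair is linearly independent.

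I expect the main obstacle to be the topological bookkeeping establishing $\Sigma_A\cap\Sigma_B=\emptyset$. This uses essentially both that the four curves are pairwise disjoint and that they lie in the interior of $\Sigma$, so that two distinct components of $\Sigma_0$ cannot meet along $\partial\Sigma$. Apart from this, the argument requires no finer homological information beyond the given linear-dependence structure of the four classes.
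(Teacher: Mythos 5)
Your argument is correct, and it takes a genuinely different route from the paper's. The paper first cuts $\Sigma$ along $\alpha\cup\gamma$ and splits into cases according to the number $n\in\{1,2,3\}$ of components of $\Sigma\setminus(\alpha\cup\gamma)$: for $n=1$ it adapts Wajnryb's counting argument (three components and eight new boundary circles force some component to carry at least three boundary curves from the set, hence two distinct and therefore independent ones), while $n=2,3$ are handled by locating a suitable component explicitly. Your proof replaces this case analysis with a single separation argument: if no component carried an independent pair, then (since every cross pair from $\{\alpha,\beta\}\times\{\gamma,\delta\}$ is independent by hypothesis) each component would be adjacent only to $\alpha\cup\beta$ or only to $\gamma\cup\delta$, and the two resulting closed unions would disconnect the connected surface $\Sigma$. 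This is shorter, uniform, and makes transparent that the only inputs are disjointness of the curves, independence of the cross pairs, and connectedness of $\Sigma$; the paper's $n=1$ counting yields slightly more (a component meeting three of the four curves), but that extra information is not needed for Lemma \ref{lem:square0}. Two small points you should spell out in a final write-up, neither of which is a gap: (i) $\Sigma$ is compact, so $\Sigma_0$ has finitely many components and $\Sigma_A$, $\Sigma_B$ are genuinely closed as finite unions of closures; and (ii) if a frontier point of a component $F$ lies on one of the curves $c$, then the entire curve $c$ occurs in $\partial F$ (the side of the annular neighborhood of $c$ met by $F$ is connected), which is what upgrades ``the closures intersect along $c$'' to ``$c$ is a common boundary curve,'' the step your disjointness argument for $\Sigma_A\cap\Sigma_B$ relies on.
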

\begin{proof}
We cut \(\Sigma\) open along \(\alpha\) and \(\gamma\), and let \(n\) be the number of connected components in \[\Sigma' = \Sigma \setminus (\alpha \cup \gamma).\] Depending on the separating properties of \(\alpha\) and \(\gamma\), we have \(n = 1\), \(2\), or \(3\). 

For \( n = 1 \), we adapt the argument in the proof of \cite[Lemma 10]{Wa1998}. The surface \( \Sigma' \setminus (\beta \cup \delta) \) has three connected components and eight new boundary components. It follows that one of these components must have at least three boundary components from the set \( \{ \alpha, \beta, \gamma, \delta \} \). By the separation properties of \( \alpha, \beta, \gamma \), and \( \delta \), the boundary of this component contains three distinct curves from \( \{ \alpha, \beta, \gamma, \delta \} \). Hence, at least two of them are linearly independent.

For \(n = 2\), each connected component of \(\Sigma'\) has \(\alpha \cup \gamma\) as part of its boundary. If there exists a connected component \(F \subset \Sigma'\) that does not intersect \(\beta \cup \delta\), then \(F\) is a connected component of \(\Sigma_0\) such that the boundary $\partial F$ contains \(\alpha\) and \(\gamma\). Otherwise, consider the connected component of \(\Sigma'\) containing \(\beta\), and choose a connected componet \(F \subset \Sigma_0\) whose boundary contains \(\beta \cup \gamma\).

For \(n = 3\), let \(F'\) be the connected component of \(\Sigma'\) with \(\alpha \cup \gamma \subset \partial F'\). If \(F'\) does not intersect \(\beta\) or \(\delta\), we can find \(F \subset F'\) satisfying the desired property, by the same argument as for \(n = 2\). Otherwise, \(F'\) contains both \(\beta\) and \(\delta\). Choose a connected component \(F'' \subset F'\) whose boundary contains \(\beta \cup \gamma\). If $F''$ does not intersect $\delta$, then $F''$ has desired properties. Otherwise, we cut \(F''\) along \(\delta\) and select a connected component \(F \subset F''\) such that \(\partial F = \beta \cup \delta\).
\end{proof}
\begin{proof}[Proof of Lemma \ref{lem:square0}]
If \(|\bdel| > 2\), the path \(\bp\) is a segment and is null-homotopic by Lemma \ref{lem:seg}. Thus, it suffices to consider the case \(|\bdel| = 2\). Suppose \(\iso{ \alpha, \beta }\) (or \(\iso{ \gamma, \delta }\)) is a cut-system of \(C(\bdel)\). Observe that \(\iso{ \alpha, \beta }\) (or \(\iso{ \gamma, \delta }\)) is connected by an edge to every vertex of the path \(\bp\) in \(X_2(C(\bdel))\). Thus, the path \(\bp\) can be decomposed into a sum of four triangles, each containing \(\iso{ \alpha, \beta }\) (or \(\iso{ \gamma, \delta }\)). 

The remaining case to consider is when neither \(\{\alpha, \beta\}\) nor \(\{\gamma, \delta\}\) forms a cut-system. Since \(\{\alpha, \beta\}\) (respectively, \(\{\gamma, \delta\}\)) does not form a cut-system, we have that \(\{\alpha, \beta\}\) (respectively, \(\{\gamma, \delta\}\)) belong to the same connected component of \(\Sigma\). If \(\{\alpha, \beta\}\)  and \(\{\gamma, \delta\}\) belong to different connected components of \(\Sigma\), then the path \(\bp\) is a self-separated square. 

Otherwise, let \({\Sigma}'\) be the connected component containing all four curves. By Lemma \ref{prop:for_lem}, there exists a connected component $F$ in ${\Sigma}'\setminus (\alpha\cup\beta\cup\gamma\cup\delta)$ such that $\partial F$ contains two curves which forms a vertex in $\bp$, say $\alpha$ and $\gamma$. We take a connect sum $\alpha+\gamma$ inside $F$ and get a new meridian curve $\varepsilon$ disjoint from all four curves. Since $\varepsilon$ forms a vertex with each of the four curves, the square path \(\bp\) can be decomposed into a sum of ten triangles as follows (cf. Figure 4 of \cite{Wa1998}).
\[\begin{tikzcd}[cramped, row sep=small, column sep=small]
	&&& {\iso{\alpha,\gamma}} \\
	\\
	\\
	\\
	&& {\iso{\alpha,\varepsilon}\hspace{-1em}} && {\hspace{-1em}\iso{\gamma,\varepsilon}} \\
	{\iso{\alpha,\delta}} &&&&&& {\iso{\beta,\gamma}} \\
	&& {\iso{\delta,\varepsilon}\hspace{-1em}} && {\hspace{-1em}\iso{\beta,\varepsilon}} \\
	\\
	\\
	\\
	&&& {\iso{\beta,\delta}}
	\arrow[no head, from=1-4, to=5-5]
	\arrow[no head, from=1-4, to=6-1]
	\arrow[no head, from=1-4, to=6-7]
	\arrow[no head, from=5-3, to=1-4]
	\arrow[no head, shorten <=10pt, shorten >=10pt, from=5-3, to=5-5]
	\arrow[no head, shift left=1, from=5-3, to=7-3]
	\arrow[no head, shift right=1, from=5-5, to=7-5]
	\arrow[no head, from=6-1, to=5-3]
	\arrow[no head, from=6-1, to=7-3]
	\arrow[no head, from=6-1, to=11-4]
	\arrow[no head, from=6-7, to=5-5]
	\arrow[no head, shorten <=10pt, shorten >=10pt, from=7-3, to=5-5]
	\arrow[no head, from=7-3, to=11-4]
	\arrow[no head, from=7-5, to=6-7]
	\arrow[no head, shorten <=10pt, shorten >=10pt, from=7-5, to=7-3]
	\arrow[no head, from=11-4, to=6-7]
	\arrow[no head, from=11-4, to=7-5]
\end{tikzcd}\]
\end{proof}

\begin{lemma}\label{lem11}
Let $\alpha$ and $\beta$ be good cuts. Let $\bv_1$ and $\bv_2$ be vertices of $X_2(C(\bdel))$ connected by an edge $\be_1$ and containing $\alpha$ and $\beta$ respectively (or $\bv_1=\bv_2$ contains both $\alpha$ and $\beta$). Let $\bv_3$ and $\bv_4$ be another pair of vertices of $X_2(C(\bdel))$ connected by an edge $\be_2$ and containing $\alpha$ and $\beta$ respectively. Then there exists an $\iso{\alpha}$-segment connecting $\bv_1$ and $\bv_3$, and a $\iso{\beta}$-segment connecting $\bv_2$ and $\bv_4$ such that the closed edge-path they form is null-homotopic in $X_2(C(\bdel))$.
\end{lemma}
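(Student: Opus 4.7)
The plan is to induct on $|\bdel|$, with the base case $|\bdel| \le 1$ trivial (no nontrivial loop of the described form exists). The existence of an $\iso{\alpha}$-segment connecting $\bv_1$ to $\bv_3$, and of an $\iso{\beta}$-segment connecting $\bv_2$ to $\bv_4$, follows immediately from the inductive hypothesis on connectivity: cutting $C(\bdel)$ along the good cut $\alpha$ produces a compression body $C(\bdel_\alpha)$ with $|\bdel_\alpha|=|\bdel|-1$, inside which $\bv_1$ and $\bv_3$ (with $\alpha$ deleted) are vertices connected by some edge-path via Proposition \ref{prop8} applied at smaller $|\bdel|$; re-adjoining $\alpha$ to every vertex lifts this path to the required $\iso{\alpha}$-segment, and the argument for $\beta$ is symmetric. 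The substance of the lemma therefore lies in \emph{choosing} the two segments so that the closed loop formed with $\be_1$ and $\be_2$ is null-homotopic.

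I split the analysis according to whether $\alpha$ and $\beta$ coincide. When $\alpha=\beta$, every vertex of the loop contains $\alpha$ and both edges $\be_1,\be_2$ are $\iso{\alpha}$-edges, so the entire loop projects to a closed edge-path in $X_2(C(\bdel_\alpha))$. Taking both segments to be lifts of edge-paths there, the projected loop is null-homotopic by the inductive hypothesis on simple connectivity, and adjoining $\alpha$ to each triangular or self-separated-square 2-cell of the filling lifts the null-homotopy back to $X_2(C(\bdel))$.

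When $\alpha\neq\beta$, the simple move $\be_1$ changes exactly one curve, and analogously for $\be_2$. If one of $\alpha,\beta$ is common to $\bv_1\cap\bv_2$ (and symmetrically to $\bv_3\cap\bv_4$), cut $C(\bdel)$ along that common curve: the configuration descends to a smaller compression body where the inductive hypothesis on the present lemma applies. The remaining sub-case is when $\be_1$ directly swaps $\alpha$ for $\beta$, so that $\bv_1=\iso{\alpha,\vec{c}}$, $\bv_2=\iso{\beta,\vec{c}}$, $\bv_3=\iso{\alpha,\vec{d}}$, and $\bv_4=\iso{\beta,\vec{d}}$. Here I build \emph{parallel} segments: when $(\alpha,\beta)$ forms a pre-cut-system, cut along both to obtain $C(\bdel_{\alpha,\beta})$ with $|\bdel_{\alpha,\beta}|=|\bdel|-2$, choose an edge-path from $\vec{c}$ to $\vec{d}$ in $X_2(C(\bdel_{\alpha,\beta}))$ (which exists by connectivity and is unique up to homotopy by simple connectivity, inductively), and lift it twice — once with $\alpha$ prepended and once with $\beta$ prepended. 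The loop then decomposes into a ladder of square 2-cells of the kind in Lemma \ref{lem:square0}, each null-homotopic.

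The main obstacle is the geometric compatibility required for the parallel construction: Lemma \ref{lem:square0} requires the four curves of each square to be pairwise disjoint, while an abstract simple move in the smaller complex may exchange intersecting curves, and $\alpha,\beta$ themselves may intersect or be homologous in the first place, preventing a direct cut along both. I plan to resolve this by an auxiliary-good-cut argument in the spirit of Lemma \ref{lem7}: introduce a good cut $\gamma$ disjoint from both $\alpha$ and $\beta$ which forms pre-cut-systems with each, bridge the loop through cut-systems containing $\gamma$, and perform a secondary induction on intersection numbers to reduce to the disjoint, linearly independent setting where the ladder decomposition runs cleanly.
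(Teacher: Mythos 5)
There is a genuine gap, and it stems from a misconception about disjointness. Because $\bv_1$ and $\bv_2$ are joined by the edge $\be_1$, the simple move $\be_1$ is realized by a set of $|\bdel|+1$ pairwise disjoint curves that contains representatives of both $\alpha$ and $\beta$; hence $\alpha$ and $\beta$ can always be taken disjoint (and this also holds when $\bv_1=\bv_2$, since then both lie in a single cut-system). Likewise, a simple move in a smaller complex $X_2(C(\bdel'))$ never ``exchange[s] intersecting curves'': by definition all $|\bdel'|+1$ curves involved in a simple move are pairwise disjoint. So the ``main obstacle'' you identify does not exist, the auxiliary-good-cut construction and the secondary induction on intersection numbers are unnecessary, and your proposal is left unfinished precisely because you defer the whole argument to that nonexistent step. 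The correct dichotomy --- the one the paper uses --- is simply whether $(\alpha,\beta)$ forms a pre-cut-system. If it does, choose $|\bdel|-2$ curves $\vec{\gamma}_1$ from the common curves of $\bv_1,\bv_2$ so that $\bv_5=\iso{\alpha,\beta,\vec{\gamma}_1}$ is a cut-system, likewise $\bv_6$ from $\bv_3,\bv_4$, connect $\bv_5,\bv_6$ by an $\iso{\alpha,\beta}$-segment (Lemma~\ref{lem:common}), and observe that the closed loop is a sum of the two triangles $(\bv_1,\bv_2,\bv_5)$ and $(\bv_3,\bv_4,\bv_6)$ together with that segment traversed both ways. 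If $(\alpha,\beta)$ is not a pre-cut-system, cut along the (disjoint!) $\alpha,\beta$, obtaining a smaller compression body and a handlebody, and build the parallel ladder exactly as you propose; every square in it has four pairwise disjoint curves automatically, so Lemma~\ref{lem:square0} applies without any further work.

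Your case analysis for $\alpha\neq\beta$ also has a hole. You reduce when ``one of $\alpha,\beta$ is common to $\bv_1\cap\bv_2$ (and symmetrically to $\bv_3\cap\bv_4$),'' but the symmetry is not automatic: $\be_1$ may fix $\alpha$ while $\be_2$ swaps $\alpha$ for $\beta$, in which case $\alpha\in\bv_1\cap\bv_2$ but $\alpha\notin\bv_4$, and neither curve is shared across both edges. That configuration is neither this sub-case nor your remaining sub-case of ``$\be_1$ directly swaps $\alpha$ for $\beta$.'' The pre-cut-system dichotomy is uniform in how $\alpha,\beta$ sit among the four vertices and avoids this leak entirely.
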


\begin{proof}
Suppose \((\alpha, \beta)\) is a pre-cut-system. Then, we select \(|\bdel|-2\) curves \(\vec{\gamma_1}\) from the common curves of $\bv_1$ and $\bv_2$ such that \(\bv_5 =\iso{ \alpha, \beta, \vec{\gamma_1} }\) is a cut-system. Similarly, we choose \(|\bdel|-2\) curves \(\vec{\gamma_2}\) from the common curves of $\bv_3$ and $\bv_4$ such that \(\bv_6 =\iso{ \alpha, \beta, \vec{\gamma_2} }\) is also a cut-system. By Lemma \ref{lem:common}, we have that \(\bv_5 \) and \(\bv_6\) are connected by an \(\iso{ \alpha, \beta }\)-segment. We extend the \( \iso{\alpha, \beta} \)-segment to an \( \alpha \)-segment \( \bp_1 \) from \( \bv_1 \) to \( \bv_3 \), and to a \( \beta \)-segment \( \bp_2 \) from \( \bv_2 \) to \( \bv_4 \). Since \( (\bv_1, \bv_2, \bv_5) \) forms one triangle and \( (\bv_3, \bv_4, \bv_6) \) forms another, the union of the segments \( \bp_1 \), \( \bp_2 \), and the two edges \( \be_1 \), \( \be_2 \) is a sum of two triangles and a trivial path, as illustrated below (cf. \cite[Figure 5 (right)]{Wa1998}).

\[\begin{tikzcd}
	{\bv_1} && {\bv_2} \\
	& {\bv_5 = \iso{\alpha,\beta,\vec{\gamma_1}}} \\
	\\
	\\
	& {\bv_6 = \iso{\alpha,\beta,\vec{\gamma_2}}} \\
	{\bv_3} && {\bv_4}
	\arrow["{\be_1}", no head, from=1-1, to=1-3]
	\arrow[no head, from=1-1, to=2-2]
	\arrow["{\bp_1}"{description}, dashed, no head, from=1-1, to=6-1]
	\arrow[no head, from=1-3, to=2-2]
	\arrow["{\bp_2}"{description}, dashed, no head, from=1-3, to=6-3]
	\arrow["{\iso{\alpha,\beta}-\mathrm{segment}}"{description}, squiggly, no head, from=2-2, to=5-2]
	\arrow[no head, from=6-1, to=5-2]
	\arrow["{\be_2}"', no head, from=6-1, to=6-3]
	\arrow[no head, from=6-3, to=5-2]
\end{tikzcd}\]

Suppose \( (\alpha, \beta) \) is not a pre-cut-system. Cut \( C(\bdel) \) along the curves \( \alpha \) and \( \beta \) and obtain a sutured compression body $C(\bdel')$ and a handlebody $H$. The common curves in $\bv_1$ and $\bv_2$ split into a cut-system $\bu_1$ of  $C(\bdel')$ and a cut-system $\bu_2$ of $H$. Similarly, the common curves in the vertices $\bv_3$ and $\bv_4$ split into a cut-system $\bw_1$ of $C(\bdel')$ and a cut-system $\bw_2$ of $H$. Connect \( \bu_1 \) and \( \bw_1 \) with an edge-path in \( X_2(C(\bdel')) \), and connect $\bu_2$ and $\bw_2$ with an edge-path in $X_2(H)$. 

Connect \( \bv_1 \) and \( \bv_3 \) with a \( \iso{ \alpha } \)-segment \( \bp_{\alpha} \) as in the diagram below. Replacing \( \alpha \) with \( \beta \) in the edge-path \( \bp_{\alpha} \) yields a \( \iso{ \beta } \)-segment \( \bp_{\beta} \), which connects \( \bv_2 \) and \( \bv_4 \). The edge-paths \( \bp_\alpha \) and \( \bp_\beta \), together with the edges connecting their endpoints, decompose into squares that are null-homotopic by Lemma \ref{lem:square0}, as shown below (cf. \cite[Figure 5 (left)]{Wa1998}).
\[\begin{tikzcd}[cramped, row sep=small, column sep=tiny]
	{\bv_1=\hspace{-1em}} & {\iso{\bu_1,\bu_2,\alpha}} && {\iso{\bu_1,\bu_2,\beta}} & {\hspace{-1em}=\bv_2} \\
	& {\iso{\bu_1',\bu_2,\alpha}} && {\iso{\bu_1',\bu_2,\beta}} \\
	\\
	& {\iso{\bw_1,\bu_2,\alpha}} && {\iso{\bw_1,\bu_2,\beta}} \\
	& {\iso{\bw_1,\bu_2',\alpha}} && {\iso{\bw_1,\bu_2',\beta}} \\
	\\
	{\bv_3=\hspace{-1em}} & {\iso{\bw_1,\bw_2,\alpha}} && {\iso{\bw_1,\bw_2,\beta}} & {\hspace{-1em}=\bv_4}
	\arrow[no head, from=1-2, to=1-4]
	\arrow[no head, from=1-2, to=2-2]
	\arrow[no head, from=1-4, to=2-4]
	\arrow[no head, from=2-2, to=2-4]
	\arrow[squiggly, no head, from=2-2, to=4-2]
	\arrow[squiggly, no head, from=2-4, to=4-4]
	\arrow[no head, from=4-2, to=4-4]
	\arrow[no head, from=4-2, to=5-2]
	\arrow[no head, from=4-4, to=5-4]
	\arrow[no head, from=5-2, to=5-4]
	\arrow[squiggly, no head, from=5-2, to=7-2]
	\arrow[squiggly, no head, from=5-4, to=7-4]
	\arrow[no head, from=7-2, to=7-4]
\end{tikzcd}\]
\end{proof}

We define the distance between two curves $\alpha$ and $\beta$ as their intersection number $\left|\alpha \cap \beta\right|$. The distance between a curve $\gamma$ and a collection of curves $\left(\alpha_1, \ldots, \alpha_{k}\right)$ is equal to the minimum of the distances $\left|\gamma \cap \alpha_i\right|$. The radius of the edge-path around a curve $\alpha$ is equal to the maximum of the distances from $\alpha$ to the vertices of the edge-path. 

For a general closed edge-path, the goal is to reduce it to a sum of simpler building blocks from the previous lemmas. This is achieved by induction on the radius of the edge-path around a fixed curve \( \alpha \), and by induction on the number of segments at the maximal distance from \( \alpha \). 
\begin{lemma}\label{lem12}
A closed edge-path ${\bp}$ of radius zero around a curve $\alpha$ of some vertex of ${\bp}$ is null-homotopic in $X_2(C(\bdel))$.
\end{lemma}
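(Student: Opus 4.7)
The plan is to induct on $|\bdel|$. The base cases $|\bdel|\le 1$ are immediate, since then $X_2(C(\bdel))$ has at most one vertex and no nontrivial loops. Now fix a closed edge-path $\bp = \bv_0 \to \bv_1 \to \cdots \to \bv_n = \bv_0$ of radius zero around the curve $\alpha$, which by hypothesis lies in some vertex of $\bp$ and is therefore a good cut of $C(\bdel)$.

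If $\alpha \in \bv_i$ for every $i$, then $\bp$ is already an $\iso{\alpha}$-segment and Lemma~\ref{lem:seg} gives a null-homotopy. The work lies in reducing the general case to this one. The radius-zero hypothesis provides, for each $i$, a curve $\beta_i \in \bv_i$ with $\beta_i \cap \alpha = \emptyset$. Writing each edge as $\bv_i = \iso{\mu_i, \vec{\sigma}_i} \to \bv_{i+1} = \iso{\mu_{i+1}, \vec{\sigma}_i}$, I call the edge \emph{consistent} if some curve in the common set $\vec{\sigma}_i$ is disjoint from $\alpha$, and \emph{inconsistent} otherwise. On a consistent edge one may take $\beta_i = \beta_{i+1}$ to be the shared witness, so that maximal runs of consistent edges form $\iso{\beta}$-segments for various $\beta$ disjoint from $\alpha$. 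On an inconsistent edge, $\mu_i$ and $\mu_{i+1}$ are the only curves of $\bv_i$ and $\bv_{i+1}$, respectively, that are disjoint from $\alpha$.

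With this decomposition I would first handle each maximal $\iso{\beta_k}$-sub-segment $\bq_k$ of $\bp$. By Lemma~\ref{lem11}, applied to the disjoint pair of good cuts $(\alpha, \beta_k)$ at the two endpoints of $\bq_k$, I can pair $\bq_k$ with an $\iso{\alpha}$-sub-segment $\bq_k'$ joining the same endpoints so that $\bq_k \cup \bq_k'$ bounds a null-homotopic loop in $X_2(C(\bdel))$. Thus, modulo null-homotopy, I may replace each $\iso{\beta_k}$-sub-segment by an $\iso{\alpha}$-sub-segment. For each inconsistent edge $\bv_i \to \bv_{i+1}$, the curves $\alpha$, $\mu_i$, $\mu_{i+1}$ are pairwise... specifically $\alpha \cap \mu_i = \alpha \cap \mu_{i+1} = \emptyset$, and I want to insert a vertex containing $\alpha$. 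Using Lemma~\ref{lem7} I can find cut-systems containing $\alpha$ near each endpoint; then depending on whether the relevant quadruple of curves forms a pre-cut-system and on whether they lie in one or two components of $\Sigma$ cut along the remaining curves, I fill with a Type~I/II triangle, a self-separated square, or invoke Lemma~\ref{lem:square0}.

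After these local replacements, $\bp$ is expressed in $X_2(C(\bdel))$ as a sum of explicit null-homotopic pieces together with a closed $\iso{\alpha}$-segment, and Lemma~\ref{lem:seg} kills the remaining segment. The main obstacle is the treatment of inconsistent edges: here $\vec{\sigma}_i$ consists entirely of curves that meet $\alpha$, so one cannot directly replace $\mu_i$ or $\mu_{i+1}$ by $\alpha$ via a simple move, and the insertion of $\alpha$ into the cut-system requires navigating through auxiliary vertices, invoking Lemma~\ref{lem11} and Lemma~\ref{lem:square0} simultaneously and keeping careful track of the connectedness of $\Sigma$ (so that the self-separated square cells are used whenever $\vec{\sigma}_i$ separates $\mu_i, \mu_{i+1}$ from $\alpha$ into different components).
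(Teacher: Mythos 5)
Your overall strategy---decompose $\bp$ into segments whose common curves are disjoint from $\alpha$, then absorb each such segment using Lemma \ref{lem11} and Lemma \ref{lem:seg}---is the right one, and your handling of the case of exactly two segments essentially matches the paper's (its Figure \ref{melon}). But there are two genuine problems. First, the induction is set up on the wrong quantity. The induction on $|\bdel|$ is the \emph{outer} induction for the whole simple-connectivity theorem (it is what powers Lemma \ref{lem:seg}); Lemma \ref{lem12} itself is proved by an inner induction on the number of segments of $\bp$ sharing a common curve disjoint from $\alpha$. Your base case is also false as stated: for $|\bdel|=1$ the complex $X_2(C(\bdel))$ has one vertex for each isotopy class of good cut and plenty of closed paths, so nothing is immediate there.

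Second, and more seriously, the inductive step that reduces the number of segments from $k>2$ to $k-1$ is exactly the part you defer to ``navigating through auxiliary vertices,'' and that is the core of the proof. Given two consecutive segments with common curves $\beta$ and $\gamma$ (both disjoint from $\alpha$), one must construct intermediate vertices $w_1,w_2,w_3$ realizing the pairs $(\alpha,\beta)$, $(\beta,\gamma)$, $(\gamma,\alpha)$ inside actual cut-systems, and this construction splits into three cases according to $\dim\operatorname{span}\{[\alpha],[\beta],[\gamma]\}\subset H_1(\Sigma;\mathbb{Z}/2)$: complete $(\alpha,\beta,\gamma)$ to a cut-system; or cut along the triple and choose cut-systems of the resulting handlebody and sutured compression body; or of two handlebodies and a compression body. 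Only then can the configuration of Wajnryb's Figure 6 be filled using Lemmas \ref{lem:seg} and \ref{lem11}. Without this step the argument does not close up. A smaller inaccuracy: Lemma \ref{lem11} does not let you replace a \emph{given} $\iso{\beta_k}$-segment by an $\iso{\alpha}$-segment with the same endpoints (those endpoints need not contain $\alpha$); it produces a ladder between the two transition edges, whose rungs are a new $\iso{\alpha}$-segment and a new $\iso{\beta}$-segment, and it is the resulting null-homotopic square together with two closed segments that one kills.
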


\begin{proof}
The proof follows the same idea as Lemma 12 in \cite{Wa1998}; we adapt their argument to our setting. We prove the lemma by induction on the number of segments sharing a common curve disjoint from \( \alpha \). If all vertices contain \( \alpha \), then we have a closed segment, which is null-homotopic by Lemma \ref{lem:seg}. 

Otherwise, consider a maximal segment containing \( \alpha \). Let \( \bu_1 \) be its first vertex and \( \bv_1 \) its last. Let \( \bu_2 \) be the next vertex of \( \bp \); then $\bu_2$ contains a curve $\beta$ disjoint from $\alpha$, the common curve of the second segment. Let \( \bv_2 \) be the last vertex of this second segment. Suppose $\bp$ consists of exactly two such segments. By Lemma \ref{lem11}, we can connect the pairs \( \bv_1, \bu_2 \) and \(\bu_1,\bv_2 \) by parallel segments, so that \( \bp \) splits into a sum of two closed segments and a null-homotopic closed edge-path (see Figure \ref{melon}). 

\begin{figure}[h]
{
   \fontsize{10pt}{11pt}\selectfont
   \def\svgwidth{2.8in}
   \begin{center}
   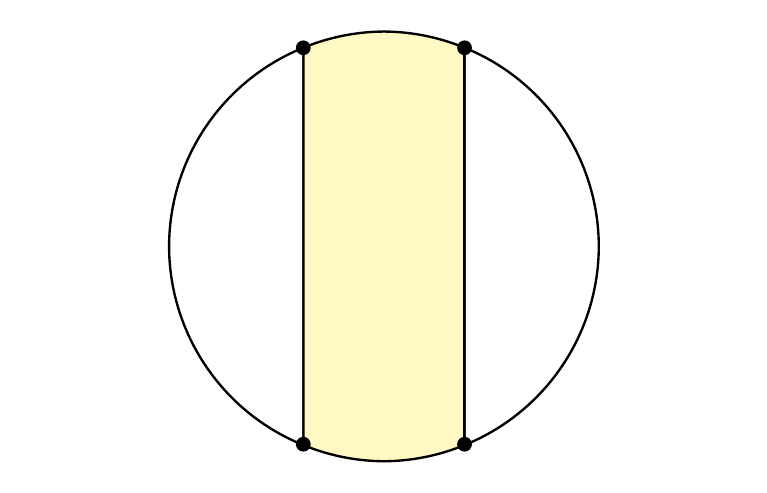
   \end{center}
   \caption{}
   \label{melon}
}
\end{figure}
Now suppose that \( \bp \) contains more than two segments. In the inductive step of Lemma 12 in \cite{Wa1998}, part of the path is replaced by a homotopic subpath to reduce the number of segments. We adapt this idea to our setting by modifying the choice of vertices \( w_1, w_2, w_3 \). Depending on the dimension of $$V:=\text{span} \{[\alpha],[\beta],[\gamma]\}\subset H_1(\Sigma;\mathbb{Z}/2),$$ there are three different cases. 

Suppose $\dim V = 3$. Since the curves $(\alpha,\beta, \gamma)$ forms a pre-cut-system, we complete it into a cut-system by adding $|\bdel|-3$ curves. Thus, we obtain a vertex $\bv$ which contains $\alpha,\beta, \gamma$. Let $w_1=w_2=w_3= \bv$.

Suppose $\dim V = 2$. Without loss of generality, we may assume that $(\alpha,\beta)$ forms a pre-cut-system. Cutting $C(\bdel)$ along $(\alpha,\beta,\gamma)$ gives us a handlebody $H$ and a sutured compression body $C$. Choose cut-systems for both $H$ and $C$, and we get $|\bdel|-2$ disjoint meridian curves. We can find vertices $\bv_1$ and $\bv_2$ which are connected by an edge, such that $\alpha, \beta \in \bv_1$ and $\gamma\in \bv_2$. Let $w_1 = w_2 = \bv_1$ and $w_3 = \bv_2$.

Suppose $\dim V=1$. Cutting $C(\bdel)$ along $(\alpha,\beta,\gamma)$ results in two handlebodies $H_1, H_2$ and a sutured compression body $C$. We choose cut-systems for $H_1$, $H_2$ and $C$, and obtain $|\bdel|-1$ disjoint meridian curves $\vec{\eta}$. Let $w_1 = \iso{ \alpha,\vec{\eta}}, w_2 = \iso{ \beta,\vec{\eta}}, w_3 = \iso{ \gamma,\vec{\eta}}$. 

Finally, the closed edge-path shown in \cite[Figure 6]{Wa1998} remains null-homotopic in \( X_2(C(\bdel)) \). This follows by replacing the use of Lemma 9 and Lemma 11 in their argument with Lemma \ref{lem:seg} and Lemma \ref{lem11}, respectively. With these substitutions, the inductive argument goes through, and the result follows by induction on the number of segments.
\end{proof}

\begin{lemma} \label{lem5}
Let $\alpha, \beta, \gamma_1, \gamma_2$ be good cuts such that $\left|\gamma_1 \cap \gamma_2\right|=k>0,\left|\gamma_1 \cap \alpha\right|<n$, and $\left|\gamma_2 \cap \alpha\right| \leq n$. Then there exists a good cut $\delta$ such that $\left|\delta \cap \gamma_1\right|<k,\left|\delta \cap \gamma_2\right|<k$, and $\left|\delta \cap \alpha\right|<n$. If also $\left|\gamma_1 \cap \beta\right|=\left|\gamma_2 \cap \beta\right|=0$, then $|\delta \cap \beta|=0$.
\end{lemma}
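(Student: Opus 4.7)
The plan is to construct $\delta$ as a band-sum of $\gamma_1$ with itself along a suitably chosen sub-arc of $\gamma_2$, following the band-sum method used in the proof of Lemma~\ref{lem7}. Fix a point $p \in \gamma_1 \cap \gamma_2$ and consider the partition of $\gamma_2$ into $k$ sub-arcs $c_1, \dots, c_k$ by $\gamma_1 \cap \gamma_2$; each $c_j$ has both endpoints on $\gamma_1$ and interior disjoint from $\gamma_1$. Since $\sum_j |c_j \cap \alpha| = |\gamma_2 \cap \alpha| \le n$, pigeonhole produces $c := c_{j_0}$ with $|c \cap \alpha| \le n/k$. Let $\alpha_1', \alpha_2'$ denote the two sub-arcs of $\gamma_1$ cut by the endpoints $\partial c$. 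The theta-graph $\gamma_1 \cup c$ has a regular neighborhood $N \subset \Sigma$ that is a pair-of-pants, whose three boundary components are push-offs of $\gamma_1 = \alpha_1' \cup \alpha_2'$ and of the two loops $\alpha_1' \cup c$, $\alpha_2' \cup c$; denote the latter two curves by $\delta_1, \delta_2$.

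Each $\delta_i$ is a meridian of $C(\bdel)$: a compressing disk is obtained by joining a sub-disk of $D_{\gamma_1}$ (bounded by $\alpha_i'$ and an interior arc) with a sub-disk of $D_{\gamma_2}$ (bounded by $c$ and an interior arc) along a band parallel to $c$. Since $\partial N = 0$ in $H_1(\Sigma; \mathbb{Z}/2)$, we have the key relation $[\delta_1] + [\delta_2] = [\gamma_1] \ne 0$, so at least one $\delta_i$ is a good cut. The intersection bounds are verified by direct inspection: after push-off, $|\delta_i \cap \gamma_1| = 0 < k$, and $|\delta_i \cap \gamma_2| \le k - 2 < k$ (only the interior $\gamma_1 \cap \gamma_2$ points on $\alpha_i'$ contribute; the push-off of $c \subset \gamma_2$ is disjoint from $\gamma_2$). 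For $\alpha$, the identity $|\delta_i \cap \alpha| = |\alpha_i' \cap \alpha| + |c \cap \alpha|$ yields $|\delta_1 \cap \alpha| + |\delta_2 \cap \alpha| = |\gamma_1 \cap \alpha| + 2|c \cap \alpha| \le (n-1) + 2n/k \le 2n - 1$ when $k \ge 2$, so the smaller of the two is at most $n - 1 < n$. When $\gamma_1, \gamma_2$ are both disjoint from $\beta$, the theta-graph $\gamma_1 \cup c$ lies in $\Sigma \setminus \beta$ and the neighborhood $N$ may be chosen there, giving $|\delta \cap \beta| = 0$ automatically.

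The main obstacle is ensuring that the good-cut candidate is simultaneously the one with $|\delta \cap \alpha| < n$: if only one of $\delta_1, \delta_2$ is homologically nontrivial and happens to have the larger $\alpha$-intersection, the pigeonhole estimate alone is insufficient. To address this, one considers both sub-arcs $c^+, c^-$ of $\gamma_2 \setminus \gamma_1$ adjacent to $p$, yielding four candidate curves satisfying $[\delta_1^\pm] + [\delta_2^\pm] = [\gamma_1]$ together with $|c^+ \cap \alpha| + |c^- \cap \alpha| \le |\gamma_2 \cap \alpha| \le n$; a case analysis over the possible distributions of homology classes among the four candidates then produces at least one meeting all three bounds. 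The degenerate case $k = 1$, where the theta-graph collapses to the wedge $\gamma_1 \vee \gamma_2$ and the band-sum yields a single curve $\delta = \gamma_1 \# \gamma_2$, is handled separately using the parity observation that $[\gamma_1] + [\gamma_2] \ne 0$ whenever $|\gamma_1 \cap \gamma_2|$ is odd.
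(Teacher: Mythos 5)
The paper's proof of this lemma is a one-line citation to \cite[Lemma~5]{Wa1998}, noting that the disk-surgery argument there carries over verbatim; your proposal instead tries to reprove it from scratch via a band-sum. The central gap is that the curves $\delta_i = \alpha_i'\cup c$ obtained by surgering $\gamma_1$ along a \emph{pigeonholed} sub-arc $c$ of $\gamma_2$ need not be meridians of $C(\bdel)$. The disk you describe (gluing sub-disks of $D_{\gamma_1}$ and $D_{\gamma_2}$ along a band) is not embedded in general, because the chord that cuts off the sub-disk of $D_{\gamma_1}$ is not required to be an arc of $D_{\gamma_1}\cap D_{\gamma_2}$. In fact there is a homological obstruction: after pushing off, $|\delta_1\cap\gamma_2|$ equals the number $m_1$ of points of $\gamma_1\cap\gamma_2$ interior to $\alpha_1'$, and if $m_1$ is odd then $\delta_1$ cannot bound a disk in $C(\bdel)$ (two meridians always have even geometric intersection, since the arcs of $D_{\delta_1}\cap D_{\gamma_2}$ have an even number of boundary points). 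This parity genuinely occurs: with $k=4$, cyclic order $p_1,p_2,p_3,p_4$ on $\gamma_1$ and $p_1,p_2,p_4,p_3$ on $\gamma_2$, the sub-arc of $\gamma_2$ from $p_2$ to $p_4$ gives $m_1=m_2=1$, so \emph{neither} $\delta_1$ nor $\delta_2$ is a meridian. Wajnryb avoids this precisely by surgering along an arc of $D_{\gamma_1}\cap D_{\gamma_2}$ that is innermost on one of the disks, which guarantees an embedded compressing disk for the surgered curve; your pigeonhole choice of $c$ forfeits that.

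The second gap is the one you flag yourself but do not close: once $c$ is constrained to produce actual meridians, you lose the freedom to choose it with $|c\cap\alpha|\le n/k$, and the candidate among $\delta_1,\delta_2$ that is a good cut need not be the one satisfying $|\delta_i\cap\alpha|<n$. The sketch of a ``case analysis over the possible distributions of homology classes among the four candidates'' is not carried out, and it is not clear that it can succeed: in the worst case, for each of $c^+,c^-$ exactly one of the two surgered curves is homologically nontrivial, and nothing in the estimates forces that curve to also meet the $\alpha$-bound. Reconciling the meridian constraint, the good-cut constraint, and the $\alpha$-estimate simultaneously is exactly the delicate content of Wajnryb's Lemma~5, which is why the paper defers to it rather than reproving it. (As a minor remark, $|\gamma_1\cap\gamma_2|$ is automatically even for meridians, so your separate $k=1$ case is vacuous.)
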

\begin{proof}
The proof of \cite[Lemma 5]{Wa1998} applies with \( S \) replaced by \( \Sigma \) and \emph{non-separating} replaced by \emph{a good cut}.
\end{proof}

\begin{lemma}\label{lem13}
Let $\alpha, \beta, \gamma_1, \gamma_2$ be good cuts such that $\left|\gamma_1 \cap \gamma_2\right|=k,\left|\gamma_1 \cap \alpha\right|<m,\left|\gamma_2 \cap \alpha\right| \leq m$ and $\left|\gamma_1 \cap \beta\right|=\left|\gamma_2 \cap \beta\right|=0$. Let $\bu_1$ and $\bu_2$ be vertices of $X_2(C(\bdel))$ containing $\gamma_1$ and $\gamma_2$ respectively. Then there exists an edge-path ${\bq}$ connecting $\bu_1$ and $\bu_2$ such that all vertices of ${\bq}$ have distance zero from $\beta$ and have distance less than $m$ from $\alpha$, except possibly for a final $\gamma_2$-segment of ${\bq}$ which ends at $\bu_2$.
\end{lemma}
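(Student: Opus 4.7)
I argue by induction on $k = |\gamma_1 \cap \gamma_2|$, relying throughout on the elementary observation that since the distance from a curve $\eta$ to a vertex $\bv$ equals $\min_{c \in \bv} |\eta \cap c|$, any vertex containing a curve $c$ with $|c \cap \alpha| < m$ and $|c \cap \beta| = 0$ automatically has distance $< m$ from $\alpha$ and distance $0$ from $\beta$. In particular, every vertex of a $\iso{\gamma_1}$-segment is good, and so is every vertex of a $\iso{\delta}$-segment whenever $\delta$ is a good cut supplied by Lemma \ref{lem5}.

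\textbf{Base case ($k = 0$).} The curves $\gamma_1,\gamma_2$ are disjoint. I produce adjacent cut-systems of the form $\bv_1 = \iso{\gamma_1, \vec{\eta}}$ and $\bv_2 = \iso{\gamma_2, \vec{\eta}}$, where $\vec{\eta}$ is a common collection of $|\bdel|-1$ disjoint meridian curves. In the generic case, when $(\gamma_1, \gamma_2)$ is a pre-cut-system, the existence of such a completion follows from Propositions \ref{prop:lid} and \ref{prop:equiv}. In the degenerate case $[\gamma_1]=[\gamma_2]$ in $H_1(\Sigma;\mathbb{Z}/2)$, I first produce an auxiliary good cut $\gamma_0$ disjoint from $\beta, \gamma_1, \gamma_2$, linearly independent from $\gamma_1$, and with $|\gamma_0 \cap \alpha| < m$, and reduce to two applications of the pre-cut-system case via a vertex containing $\gamma_0$. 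By Lemma \ref{lem:common}, applied inside $C(\bdel)$ cut along $\gamma_1$ (resp.\ $\gamma_2$), I connect $\bu_1$ to $\bv_1$ through a $\iso{\gamma_1}$-segment (automatically good) and $\bv_2$ to $\bu_2$ through a $\iso{\gamma_2}$-segment (the allowed final $\gamma_2$-segment ending at $\bu_2$).

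\textbf{Inductive step ($k > 0$).} Apply Lemma \ref{lem5} to obtain a good cut $\delta$ with $|\delta \cap \gamma_i| < k$ for $i = 1, 2$, $|\delta \cap \alpha| < m$, and $|\delta \cap \beta| = 0$, and fix any vertex $\bu_3$ containing $\delta$. The induction hypothesis applied to $(\gamma_1, \delta)$ with the same $m$ yields a path $\bq_1$ from $\bu_1$ to $\bu_3$ whose vertices are all good except possibly on a final $\iso{\delta}$-segment; by the preliminary observation, every vertex of that $\iso{\delta}$-segment is also good, so in fact $\bq_1$ is entirely good. The induction hypothesis applied to $(\delta, \gamma_2)$ with the same $m$ yields a path $\bq_2$ from $\bu_3$ to $\bu_2$ whose vertices are all good except possibly for a final $\iso{\gamma_2}$-segment. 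The concatenation $\bq = \bq_1 \cdot \bq_2$ is the desired path.

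\textbf{Main obstacle.} The conceptual heart of the argument is the preliminary observation: it renders the $\iso{\delta}$-segment exception in the induction hypothesis vacuous, which is what makes the concatenation $\bq_1 \cdot \bq_2$ clean without any further modification. The genuine technical work is confined to the base case, specifically the degenerate subcase $[\gamma_1] = [\gamma_2]$, where a direct swap is unavailable and the auxiliary curve $\gamma_0$ must be constructed with simultaneous intersection bounds against $\alpha$, $\beta$, $\gamma_1$, and $\gamma_2$.
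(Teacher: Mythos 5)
Your overall strategy is exactly the one the paper invokes (the paper's proof is just a pointer to Wajnryb's Lemma 13 with Lemma \ref{lem5} and Lemma \ref{lem:common} substituted in): induct on $k$, use Lemma \ref{lem5} to produce $\delta$ in the inductive step, and exploit the observation that any vertex containing a curve $c$ with $|c\cap\alpha|<m$ and $|c\cap\beta|=0$ is automatically ``good,'' so that the $\iso{\delta}$-segment exception in the induction hypothesis is vacuous and the two paths concatenate cleanly. The inductive step is correct as written, and the pre-cut-system subcase of the base case is fine.

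The one genuine gap is in your degenerate base subcase ($k=0$ and $[\gamma_1]=[\gamma_2]$): you assert the existence of an auxiliary good cut $\gamma_0$ disjoint from $\beta,\gamma_1,\gamma_2$, homologically independent from $\gamma_1$, and satisfying $|\gamma_0\cap\alpha|<m$, but you give no construction, and none of the available lemmas supplies one --- Lemma \ref{lem5} explicitly requires $|\gamma_1\cap\gamma_2|>0$, so it cannot be invoked here. The intersection bound against $\alpha$ is the problem: when $m$ is small (say $m=1$, so $\gamma_0$ would have to be disjoint from $\alpha$ as well), there is no reason such a $\gamma_0$ should exist. Fortunately the detour is unnecessary. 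Handle this subcase as in the base case of Lemma \ref{lem7}: cut $C(\bdel)$ along $\gamma_1\cup\gamma_2$, choose cut-systems of the resulting pieces to obtain disjoint meridian curves $\vec{\eta}$ with $\iso{\gamma_1,\vec{\eta}}\rightarrow\iso{\gamma_2,\vec{\eta}}$ an edge of $X_1(C(\bdel))$. No control over the intersections of $\vec{\eta}$ with $\alpha$ or $\beta$ is needed, because $\iso{\gamma_1,\vec{\eta}}$ contains $\gamma_1$ and is therefore good by your own preliminary observation, while $\iso{\gamma_2,\vec{\eta}}$ lies on the permitted final $\gamma_2$-segment. With that repair the proof is complete.
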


\begin{proof}
The proof of \cite[Lemma 13]{Wa1998} holds with \cite[Lemma 5]{Wa1998} replaced by Lemma \ref{lem5} and \cite[Lemma 6]{Wa1998} replaced by Lemma \ref{lem:common}.
\end{proof}

\begin{proof}[Proof of Theorem \ref{thm:xsimpc}]
The $2$-complex \( X_2(\bdel) \) is connected by Proposition \ref{prop8}. Let \( \bp \) be a closed edge-path in \( X_2(\bdel) \) with radius \( m \) around some curve \( \alpha \) of some vertex of $\bp$. If \( m = 0 \), then \( \bp \) is null-homotopic by Lemma \ref{lem12}. The induction argument in the proof of \cite[Theorem 1]{Wa1998} applies with \cite[Lemma 13]{Wa1998} replaced by Lemma \ref{lem13}.
\end{proof}

\section{the $2$-complex of cut-systems and handleslides}\label{y2}
We fix a surface \(\Sigma\) and a sutured compression body $C(\bdel)$ as previously. In this section, we assume that \(\Sigma\) does not contain any distinguished disks. 

Let $\alpha_1$ and $\alpha_2$ be two disjoint curves in $\Sigma$, and fix an embedded arc $a$ from $\alpha_1$ to $\alpha_2$ whose interior is disjoint from $\alpha_1\cup\alpha_2$ and from $\partial \Sigma$. Then a regular neighborhood of the graph $\alpha_1 \cup a \cup \alpha_2$ is a pair of pants with three boundary components: one is isotopic to $\alpha_1$, the other is isotopic to $\alpha_2$, and the third is a new curve $\alpha_1'$, which we call the curve obtained by \emph{handle-sliding} $\alpha_1$ over $\alpha_2$ along the arc $a$.

\begin{definition}[cf. Definition 2.10 in \cite{JTZ}]
Suppose $\bal$ and $\bal'$ are two attaching sets on $\Sigma$. We say that $\bal$ and $\bal'$ are \emph{related by a handleslide}, if there are components $\alpha_1$ and $\alpha_2$ of $\bal$ and a component $\alpha_1'$ of $\bal'$ such that $\alpha_1'$ can be obtained by handle-sliding $\alpha_1$ over $\alpha_2$ along some arc whose
interior is disjoint from $\bal$, and $\bal'=\left(\bal \backslash \alpha_1\right) \cup \alpha_1'$. 

If $\bu$ and $\bv$ are two cut-systems of $C(\bdel)$, then we say that they are related by a handleslide if they have representatives $\bal$ and $\bbe$, respectively, such that $\bal$ and $\bbe$ are related by a handleslide.
\end{definition}

Let \(Y_1(C(\bdel))\) denote the 1-complex whose vertices correspond to the cut-systems of \(C(\bdel)\), and whose edges correspond to handleslides between cut-systems of \(C(\bdel)\) on \(\Sigma\). 
\begin{definition}\label{def:hslide}
A \emph{handleslide loop} is one of the following sequences of cut-systems connected by handleslides. Cases (2)–(5) are illustrated in Figure \ref{slide_square}, and case (6) is shown in Figure \ref{pentagon}.
\begin{enumerate}
\item A \emph{slide triangle}, formed by $\left\iso{\alpha_1, \alpha_2, \vec{\alpha}\right},\left\iso{\alpha_2, \alpha_3, \vec{\alpha}\right}$, and $\left\iso{\alpha_3, \alpha_1, \vec{\alpha}\right}$, where $\alpha_1$, $\alpha_2$, and $\alpha_3$ bound a pair-of-pants; see Figure \ref{slide_tri}.
\item A \emph{Type-I square}, involving four distinct $\alpha$-curves, as in the link of a singularity of type (A1a) in \cite[Figure 21]{JTZ}.
\item A \emph{Type-III square} formed by sliding $\alpha_1$ over $\alpha_2$ and/or $\alpha_3$, as in case (A1b) in \cite[Figure 21]{JTZ}.
\item A \emph{Type-II square} formed by sliding $\alpha_1$ and/or $\alpha_3$ over $\alpha_2$, with $\alpha_1$ and $\alpha_3$ sliding over $\alpha_2$ from opposite sides, as in case (A1c) in \cite[Figure 21]{JTZ}.
\item A \emph{Type-IV square} formed by sliding $\alpha_1$ over $\alpha_2$ in two different ways, approaching $\alpha_2$ from opposite sides, as in case (A1d)  in \cite[Figure 21]{JTZ}.
\item A slide pentagon as in case (A2) in \cite[Figure 21]{JTZ}.
\end{enumerate}

\begin{figure}[h]
{
   \fontsize{12pt}{11pt}\selectfont
   \def\svgwidth{3.2in}
   \begin{center}
\begingroup%
  \makeatletter%
  \providecommand\color[2][]{%
    \errmessage{(Inkscape) Color is used for the text in Inkscape, but the package 'color.sty' is not loaded}%
    \renewcommand\color[2][]{}%
  }%
  \providecommand\transparent[1]{%
    \errmessage{(Inkscape) Transparency is used (non-zero) for the text in Inkscape, but the package 'transparent.sty' is not loaded}%
    \renewcommand\transparent[1]{}%
  }%
  \providecommand\rotatebox[2]{#2}%
  \newcommand*\fsize{\dimexpr\f@size pt\relax}%
  \newcommand*\lineheight[1]{\fontsize{\fsize}{#1\fsize}\selectfont}%
  \ifx\svgwidth\undefined%
    \setlength{\unitlength}{330.44182485bp}%
    \ifx\svgscale\undefined%
      \relax%
    \else%
      \setlength{\unitlength}{\unitlength * \real{\svgscale}}%
    \fi%
  \else%
    \setlength{\unitlength}{\svgwidth}%
  \fi%
  \global\let\svgwidth\undefined%
  \global\let\svgscale\undefined%
  \makeatother%
  \begin{picture}(1,0.38143464)%
    \lineheight{1}%
    \setlength\tabcolsep{0pt}%
    \put(0,0){\includegraphics[width=\unitlength,page=1]{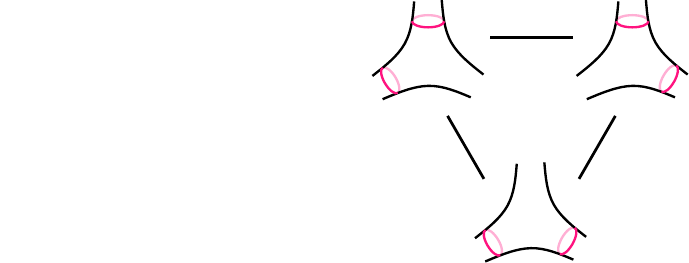}}%
    \put(0.13821135,0.20363613){\color[rgb]{1,0.0745098,0.49803922}\makebox(0,0)[lt]{\lineheight{1.25}\smash{\begin{tabular}[t]{l}$\alpha_2$\end{tabular}}}}%
    \put(0.06777258,0.10524104){\color[rgb]{1,0.0745098,0.49803922}\makebox(0,0)[lt]{\lineheight{1.25}\smash{\begin{tabular}[t]{l}$\alpha_1$\end{tabular}}}}%
    \put(0,0){\includegraphics[width=\unitlength,page=2]{slide_tri.pdf}}%
    \put(0.18470795,0.1052411){\color[rgb]{1,0.0745098,0.49803922}\makebox(0,0)[lt]{\lineheight{1.25}\smash{\begin{tabular}[t]{l}$\alpha_3$\end{tabular}}}}%
  \end{picture}%
\endgroup%

   \end{center}
   \caption{A slide triangle}
   \label{slide_tri}
}
\end{figure}

\begin{figure}[h]
{
   \fontsize{12pt}{11pt}\selectfont
   \def\svgwidth{5.2in}
   \begin{center}
   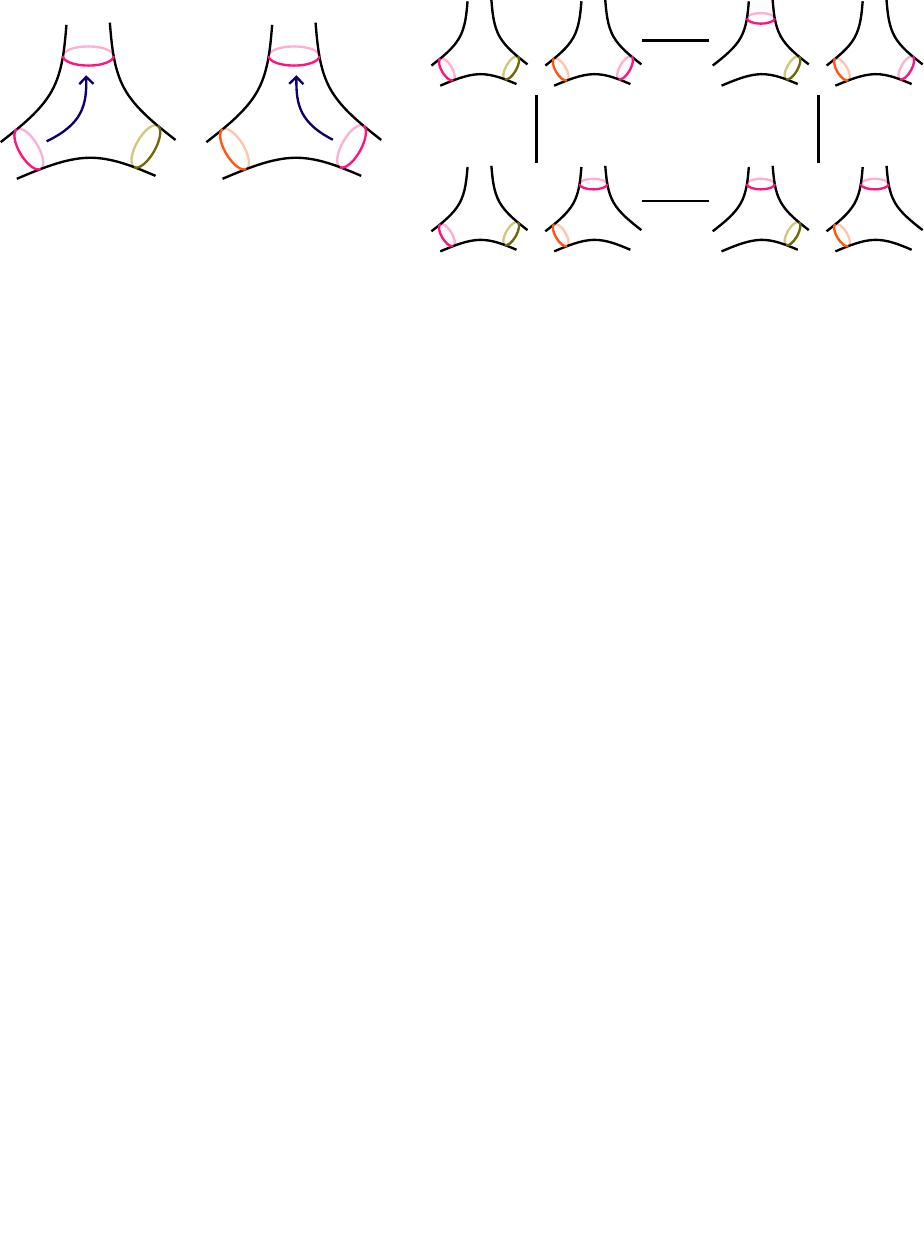
   \end{center}
   \caption{Four types of squares}
   \label{slide_square}
}
\end{figure}

\begin{figure}[h]
{
   \fontsize{12pt}{11pt}\selectfont
   \def\svgwidth{3.2in}
   \begin{center}
\begingroup%
  \makeatletter%
  \providecommand\color[2][]{%
    \errmessage{(Inkscape) Color is used for the text in Inkscape, but the package 'color.sty' is not loaded}%
    \renewcommand\color[2][]{}%
  }%
  \providecommand\transparent[1]{%
    \errmessage{(Inkscape) Transparency is used (non-zero) for the text in Inkscape, but the package 'transparent.sty' is not loaded}%
    \renewcommand\transparent[1]{}%
  }%
  \providecommand\rotatebox[2]{#2}%
  \newcommand*\fsize{\dimexpr\f@size pt\relax}%
  \newcommand*\lineheight[1]{\fontsize{\fsize}{#1\fsize}\selectfont}%
  \ifx\svgwidth\undefined%
    \setlength{\unitlength}{288.40155894bp}%
    \ifx\svgscale\undefined%
      \relax%
    \else%
      \setlength{\unitlength}{\unitlength * \real{\svgscale}}%
    \fi%
  \else%
    \setlength{\unitlength}{\svgwidth}%
  \fi%
  \global\let\svgwidth\undefined%
  \global\let\svgscale\undefined%
  \makeatother%
  \begin{picture}(1,0.92052931)%
    \lineheight{1}%
    \setlength\tabcolsep{0pt}%
    \put(0.39043504,0.43492555){\makebox(0,0)[lt]{\lineheight{1.25}\smash{\begin{tabular}[t]{l}pentagon\end{tabular}}}}%
    \put(0,0){\includegraphics[width=\unitlength,page=1]{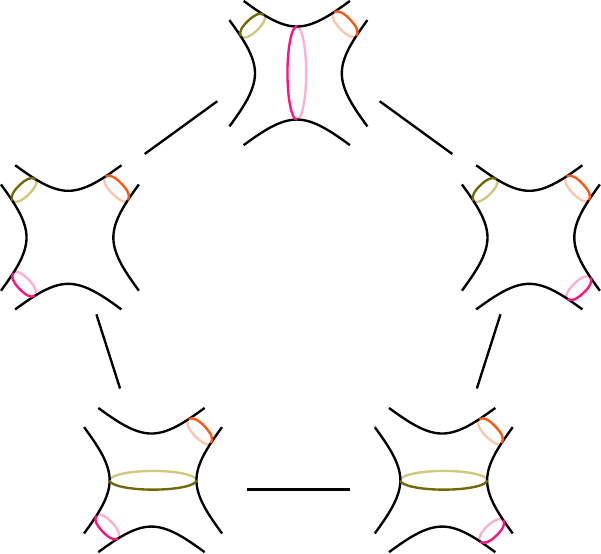}}%
  \end{picture}%
\endgroup%

   \end{center}
   \caption{}
   \label{pentagon}
}
\end{figure}

\end{definition}
We attach $2$-cells to $Y_1(C(\bdel))$ along handleslide loops, and denote the resulting $2$-complex by $Y_2(C(\bdel))$. This space is called the \emph{$2$-complex of cut-systems and handleslides} associated to the sutured compression body $C(\bdel)$.
\begin{proposition}\label{prop:y0}
The $2$-complex \(Y_2(C(\bdel))\) is connected.
\end{proposition}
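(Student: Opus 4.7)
The plan is to deduce connectivity directly from the corresponding fact about attaching sets, namely \cite[Lemma~2.11]{JTZ}. First I would let $\bu$ and $\bv$ be arbitrary cut-systems of $C(\bdel)$ and choose representative attaching sets $\bal$ and $\bbe$. By the definition of a cut-system, both $\bal$ and $\bbe$ are compression-equivalent to $\bdel$; hence $\bal\sim\bbe$.

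Next, I would invoke \cite[Lemma~2.11]{JTZ}, which asserts that any two compression-equivalent attaching sets on $\Sigma$ are related by a finite sequence of handleslides. This produces a chain
$$\bal=\bal_0,\bal_1,\ldots,\bal_n=\bbe$$
of attaching sets in which consecutive terms $\bal_i$ and $\bal_{i+1}$ differ by a single handleslide. Because a handleslide corresponds to sliding one $2$-handle of $C(\bal_i)$ over another, the compression bodies $C(\bal_i)$ and $C(\bal_{i+1})$ admit a canonical diffeomorphism that is the identity on $\Sigma$, so compression equivalence is preserved under handleslides. Consequently, each $\bal_i$ is compression-equivalent to $\bdel$, so each $\iso{\bal_i}$ is a vertex of $Y_2(C(\bdel))$, and $\iso{\bal_i}$ is joined to $\iso{\bal_{i+1}}$ by an edge of $Y_1(C(\bdel))$.

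Stringing these edges together yields an edge-path in $Y_1(C(\bdel))\subseteq Y_2(C(\bdel))$ from $\bu$ to $\bv$, which establishes connectivity. The only nontrivial ingredient is the cited \cite[Lemma~2.11]{JTZ}, and this is where the main content of the argument sits; everything else is purely formal manipulation of the definitions of cut-system, compression equivalence, and handleslide. Since the conclusion concerns only the $1$-skeleton, the six families of $2$-cells introduced in Definition~\ref{def:hslide} play no role here, which is why this proposition is appreciably lighter than the parallel connectivity statement for $X_2(C(\bdel))$ proved in Section~\ref{x2}.
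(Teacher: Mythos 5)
Your argument is correct and follows essentially the same route as the paper: both reduce to the fact that the cut-systems are compression-equivalent to $\bdel$ (Proposition \ref{prop:equiv}) and then invoke \cite[Lemma~2.11]{JTZ} to produce the connecting sequence of handleslides. Your additional remark that each intermediate attaching set remains compression-equivalent to $\bdel$, and hence is a vertex of $Y_2(C(\bdel))$, is a point the paper leaves implicit but is a worthwhile check.
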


\begin{proof}
Let \( \iso{ \bal } \) and \( \iso{ \bbe } \) be two cut-systems of \( C(\bdel) \). By Proposition \ref{prop:equiv}, we have \( \bal \sim \bdel \sim \bbe \). It follows from \cite[Lemma 2.11]{JTZ} that \( \iso{ \bal } \) and \( \iso{ \bbe } \) are related by a sequence of handleslides.
\end{proof}

We now proceed to prove that \( Y_2(C(\bdel)) \) is simply connected. To prepare for this, we first establish the following lemma and proposition.

\begin{lemma}\label{lem:D}
Suppose $\iso{\bbe}$ is a cut-system of $C(\bdel)$, and \( \alpha \) is a meridian curve of $C(\bdel)$ disjoint from \( \bbe \). Then, \( \Sigma \setminus (\bbe \cup \alpha) \) has a connected component that is a punctured sphere whose boundary does not meet \( \partial \Sigma \) and in which \( \alpha \) appears exactly once.
\end{lemma}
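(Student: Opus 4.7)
The plan is to show that $\alpha$ bounds an embedded disk $D$ in the surface $\Sigma_\bbe$ obtained from $\Sigma$ by surgery along $\bbe$ (that is, remove an annular neighborhood of each $\beta_i$ and cap off with two disks), and then recover the desired component of $\Sigma\setminus(\bbe\cup\alpha)$ by removing from $D$ the surgery cap-disks that lie in its interior.

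First, I would unpack the cut-system hypothesis. Since $\iso{\bbe}$ is a cut-system, cutting $C(\bdel)$ along disjoint meridian disks $D_\bbe$ yields the product sutured manifold $C_+(\bdel)\times I$. A direct inspection of the effect of removing $\nu(D_\bbe)$ on $C_-(\bdel)=\Sigma$ identifies the bottom face $C_+(\bdel)\times\{0\}$ with $\Sigma_\bbe$: the annular collar of each $\beta_i$ on $\Sigma$ is replaced by the two disks $D_{\beta_i}\times\{\pm\epsilon\}$ forming part of the new boundary. Since $\alpha$ is disjoint from $\bbe$, it sits naturally as a simple closed curve on $\Sigma_\bbe$.

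Second, I would produce the disk $D\subset\Sigma_\bbe$. Using irreducibility of $C(\bdel)$ and an innermost-circle argument (analogous to the one in the proof of Proposition \ref{prop:equiv}), I can choose a meridian disk $D_\alpha$ for $\alpha$ disjoint from $D_\bbe$, so that $D_\alpha$ survives in the cut manifold $C_+(\bdel)\times I$. Since $\pi_1(\Sigma_\bbe\times I)=\pi_1(\Sigma_\bbe)$ and $\partial D_\alpha=\alpha$, the curve $\alpha$ is null-homotopic in $\Sigma_\bbe$, so it bounds an embedded disk $D\subset\Sigma_\bbe$. Next, because $\bbe\sim\bdel$ (equivalently, by Proposition \ref{prop:lid} applied to the homology classes in a cut-system), $\bbe$ is itself an attaching set, so every component of $\Sigma\setminus\bbe$, and hence every component of $\Sigma_\bbe$, meets $\partial\Sigma$. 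Thus the component of $\Sigma_\bbe$ containing $\alpha$ is strictly larger than $D$, and $D$ is disjoint from $\partial\Sigma_\bbe=\partial\Sigma$.

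Finally, the desired component $F$ of $\Sigma\setminus(\bbe\cup\alpha)$ is $D$ with the open surgery-cap disks in its interior removed. This is a planar surface with boundary consisting of one copy of $\alpha$ (since $\alpha$ separates $\Sigma_\bbe$ into $D$ and its complement, and hence separates $\Sigma\setminus\bbe$ as well, placing only the inner side in $F$) together with a subcollection of the cut circles from $\bbe$; none of these touch $\partial\Sigma$, because $D$ does not. The main obstacle is the second step, producing the bounding disk $D$ in the \emph{surface} $\Sigma_\bbe$ rather than merely in the $3$-manifold $C_+(\bdel)\times I$: this is exactly where the cut-system hypothesis enters, via the $\pi_1$-identification coming from the product structure. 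The remainder of the argument is a direct translation through the surgery identification $C_+(\bdel)\times\{0\}\cong\Sigma_\bbe$.
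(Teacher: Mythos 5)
Your argument is correct and follows essentially the same route as the paper: compress $\Sigma$ along $\bbe$, use the product structure $C(\bdel)\setminus\nu(D_{\bbe})\cong C_+(\bdel)\times I$ to conclude that $\alpha$ is null-homotopic in the compressed surface and hence bounds an embedded disk $D$ there, and recover the desired component by deleting the surgery cap-disks from $D$. The additional details you supply (choosing $D_\alpha$ disjoint from $D_{\bbe}$, and deducing the multiplicity-one condition from the fact that $\alpha$ separates off $D$) are consistent with, and slightly more explicit than, the paper's own proof.
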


\begin{proof}
Choose disjoint meridian disks \( D_{\alpha} \) and \( D_{\bbe} \) in \( C(\bdel) \), and let \( \Sigma' \) be the surface obtained by compressing \( \Sigma \) along \( D_{\bbe} \). Since \( C(\bdel) \setminus \nu(D_{\bbe}) \) is diffeomorphic to \( \Sigma' \times I \), the curve \( \alpha \) bounds a disk in this space, implying that \( \alpha \) is null-homotopic in \( \Sigma' \). Thus, there exists a disk \( D \subset \Sigma' \) with \( \partial D = \alpha \). Since \( F=D \cap (\Sigma \setminus \bbe) \) is a punctured disk obtained by removing from \( D \) the disks attached during the compression, the connected component of \( \Sigma \setminus (\bbe \cup \alpha) \) containing $F$ has the desired properties.
\end{proof}

\begin{proposition}\label{prop:F}
Let $\alpha_1,\alpha_2$ and $\vec{\beta}$ be disjoint meridian curves on $\Sigma$ such that $\iso{ \alpha_1, \vec{\beta} }$ and $\iso{ \alpha_2, \vec{\beta} }$ are cut-systems of $C(\bdel)$. Then, there exists exactly one connected component \(F\) of \[\Sigma \setminus (\alpha_1 \cup \alpha_2 \cup \vec{\beta})\] such that $g(F) = 0$, \(\partial F \cap \partial \Sigma = \varnothing\) and \(\partial F\) contains $\alpha_1$ and $\alpha_2$, each with multiplicity one.
\end{proposition}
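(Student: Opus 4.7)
The plan is to apply Lemma \ref{lem:D} to the cut-system $\iso{\alpha_1, \vec{\beta}}$ and the meridian curve $\alpha_2$. This yields a connected component $F$ of $\Sigma \setminus (\alpha_1 \cup \vec{\beta} \cup \alpha_2)$ which is a punctured sphere (so $g(F) = 0$), satisfies $\partial F \cap \partial \Sigma = \varnothing$, and in which $\alpha_2$ appears on $\partial F$ with multiplicity one. The remaining tasks are to show that $\alpha_1$ also appears on $\partial F$ with multiplicity one, and then to establish uniqueness.

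The central observation is that $\iso{\alpha_2, \vec{\beta}}$ is likewise a cut-system, hence in particular an attaching set on $\Sigma$, so every connected component of $\Sigma \setminus (\alpha_2 \cup \vec{\beta})$ must contain a boundary component of $\Sigma$. I use this to rule out both degenerate multiplicities for $\alpha_1$ on $\partial F$. If $\alpha_1$ does not appear on $\partial F$, then $F$ coincides with its containing component in $\Sigma \setminus (\alpha_2 \cup \vec{\beta})$, which contradicts $\partial F \cap \partial \Sigma = \varnothing$. If $\alpha_1$ appears twice on $\partial F$, then $\alpha_1$ is non-separating in the unique component $G$ of $\Sigma \setminus (\alpha_2 \cup \vec{\beta})$ containing it, so $G$ is recovered from $F$ by identifying the two $\alpha_1$-boundary circles; thus $\partial G \cap \partial \Sigma = \partial F \cap \partial \Sigma = \varnothing$, again contradicting the attaching set property.

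For uniqueness, suppose two distinct components $F_1 \neq F_2$ of $\Sigma \setminus (\alpha_1 \cup \alpha_2 \cup \vec{\beta})$ both satisfy the stated properties. Since $\alpha_1$ contributes at most two boundary circles in total after cutting, and each of $F_1, F_2$ uses one copy, $\alpha_1$ does not appear on any other component. Both $F_1$ and $F_2$ therefore sit inside the same component $G$ of $\Sigma \setminus (\alpha_2 \cup \vec{\beta})$, with $G = F_1 \cup_{\alpha_1} F_2$. Since $\partial F_i \cap \partial \Sigma = \varnothing$ for $i = 1, 2$, we obtain $\partial G \cap \partial \Sigma = \varnothing$, once more contradicting the attaching set property of $\iso{\alpha_2, \vec{\beta}}$.

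The main obstacle is bookkeeping the relation between connected components of $\Sigma \setminus (\alpha_1 \cup \alpha_2 \cup \vec{\beta})$ and the coarser decomposition $\Sigma \setminus (\alpha_2 \cup \vec{\beta})$, and in particular recognizing when cutting along $\alpha_1$ merely refines an existing component versus leaves it untouched. Once this is correctly organized, however, all three failure modes (multiplicity zero, multiplicity two, non-uniqueness) reduce to the same contradiction with the attaching set property of $\iso{\alpha_2, \vec{\beta}}$, so no additional machinery beyond Lemma \ref{lem:D} and Proposition \ref{prop:lid} is required.
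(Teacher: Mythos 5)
Your proposal is correct and follows essentially the same route as the paper: existence via Lemma \ref{lem:D}, the remaining multiplicity pinned down by the attaching-set property of the other cut-system, and uniqueness by a gluing/contradiction argument (you apply Lemma \ref{lem:D} with the roles of $\alpha_1$ and $\alpha_2$ swapped, and in the uniqueness step you glue only along $\alpha_1$ and contradict the attaching-set property of $(\alpha_2,\vec{\beta})$, where the paper glues along both curves and contradicts $\vec{\beta}$ being a pre-cut-system; these are equivalent minor variants).
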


\begin{proof}  
Since \(\alpha_1\) is a meridian curve and \(\iso{ \alpha_2, \vec{\beta} }\) is a cut-system, Lemma \ref{lem:D} implies the existence of a connected surface \(F \subset \Sigma \setminus (\alpha_1 \cup \alpha_2 \cup \vec{\beta})\) such that \(g(F) = 0\), \(\partial F \cap \partial \Sigma = \varnothing\), and \(\alpha_1\) appears in \(\partial F\) with multiplicity one. It remains to show that \(\partial F\) must contain \(\alpha_2\) with multiplicity one.

Gluing \(\Sigma \setminus (\alpha_1 \cup \alpha_2 \cup \vec{\beta})\) along two copies of \(\alpha_2\) results in \(\Sigma \setminus (\alpha_1 \cup \vec{\beta})\). Suppose \(\alpha_2\) does not appear in \(\partial F\). Then, \(F\) is also a connected component of \(\Sigma \setminus (\alpha_1 \cup \vec{\beta})\). However, since \(\iso{ \alpha_1, \vec{\beta} }\) is a cut-system, each connected component of \(\Sigma \setminus (\alpha_1 \cup \vec{\beta})\) must have a nonempty intersection with \(\partial \Sigma\). Since \(\partial F \cap \partial \Sigma = \varnothing\), this results in a contradiction.

Now, suppose \(\partial F\) contains two copies of \(\alpha_2\). Then, \(F\) induces a connected component \(F'\) of \(\Sigma \setminus (\alpha_1 \cup \vec{\beta})\). Since \(F'\) is obtained from \(F\) by gluing along two copies of \(\alpha_2\), we still have that \(F'\) does not intersect \(\partial \Sigma\). Again, this contradicts the fact that \(\iso{ \alpha_1, \vec{\beta} }\) is a cut-system. Thus, we conclude that \(\partial F\) must contain \(\alpha_2\) with multiplicity one.

For uniqueness, suppose there exists two such connected component $F_1,F_2$. Gluing $F_1$ and $F_2$ along $\alpha_1$ and $\alpha_2$, we have that $F'=F_1\cup F_2$ is a connected component of $\Sigma\setminus \bbe$ such that $\partial F'\cap \partial \Sigma = \varnothing$. This contradicts the fact that $\bbe$ is a pre-cut-system.
\end{proof}

As the first step in the proof that \( Y_2(C(\bdel)) \) is simply connected, we apply the \emph{minimal resolution} method, which replaces each edge in \( X_1(C(\bdel)) \) with an edge-path in \( Y_1(C(\bdel)) \). Although this technique was used implicitly in Propositions A.4 and A.5 of \cite{JTZ}, it was not explicitly defined there. For clarity and completeness, we now define it as follows.

Consider an edge \[ \be = \iso{ \alpha_1, \vec{\beta} } \rightarrow \iso{ \alpha_2, \vec{\beta} } \] in \( X_1(C(\bdel)) \). By Proposition \ref{prop:F}, there exists a unique connected component \( F \) of \( \Sigma \setminus (\alpha_1 \cup \alpha_2 \cup \vec{\beta}) \) such that \( F \) has genus zero and its boundary consists of a single copy of \( \alpha_1 \), a single copy of \( \alpha_2 \), and a collection of curves \( \vec{\beta_0} \), where each curve in \( \vec{\beta_0} \) arises from cutting along \( \vec{\beta} \).

Let \( \mathcal{P} \) be a pants decomposition of \( F \). We say that \( \mathcal{P} \) is \emph{regular} if, in each pair of pants, exactly one boundary component belongs to \( \vec{\beta}_0 \). Let \( \partial \mathcal{P} \) denote the set of boundary curves of the pairs of pants in \( \mathcal{P} \) that are not contained in \( \partial F \). Since each \( \gamma_i \in \partial \mathcal{P} \) separates \( \alpha_1 \) and \( \alpha_2 \), we specify an ordering \( \Gamma(\mathcal{P}) = (\gamma_1, \dots, \gamma_k) \) on $\partial \cP$ by requiring that for every \( j < i \), the curve \( \gamma_j \) lies in the connected component of \( F \setminus \gamma_i \) that contains \( \alpha_1 \) (See Figure \ref{regular}).

\begin{figure}[h]
{
   \fontsize{12pt}{11pt}\selectfont
   \def\svgwidth{5.8in}
   \begin{center}
   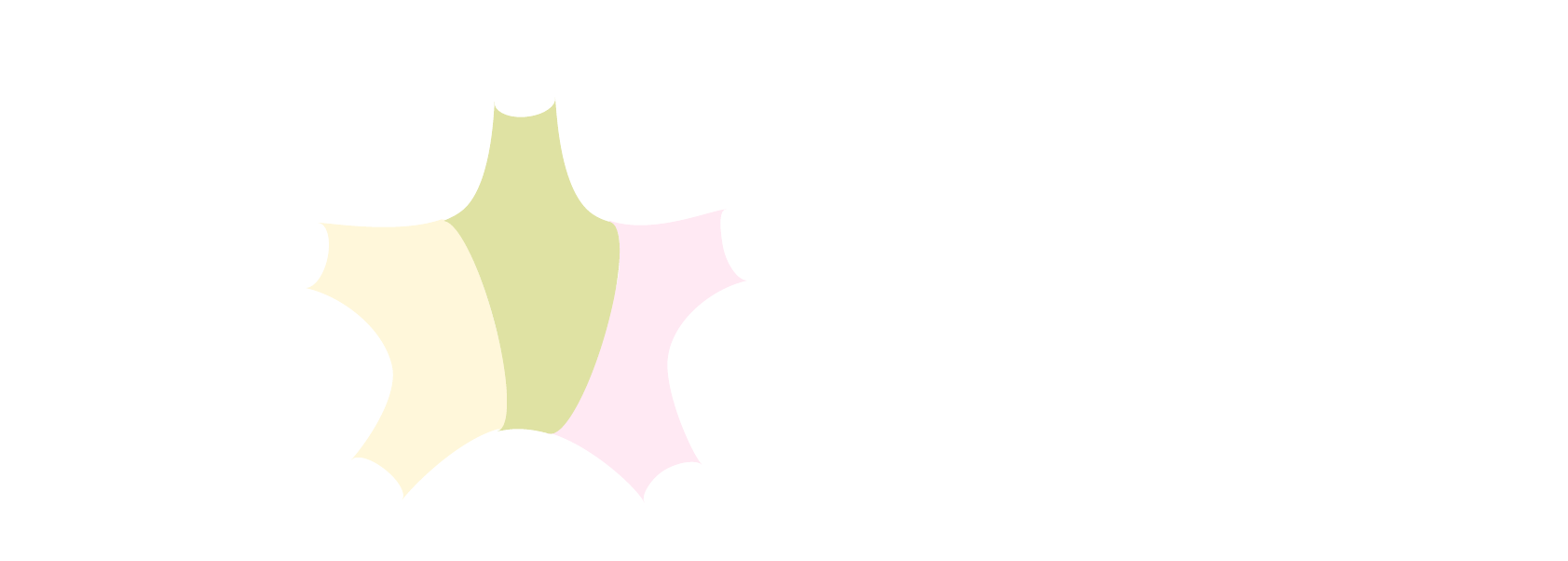
   \end{center}
   \caption{An example of regular pants decomposition}
   \label{regular}
}
\end{figure}

For each regular decomposition $P$, we define an edge-path $\bp$ in $Y_1(C(\bdel))$ to be
\[\bp =  \iso{ \alpha_1, \vec{\beta} } \rightarrow \iso{ \gamma_1, \vec{\beta} } \rightarrow \dots \rightarrow \iso{ \gamma_k, \vec{\beta} } \rightarrow \iso{ \alpha_2, \vec{\beta} },\]
where each edge is the elementary handleslide in the corresponding pair of pants over the single boundary component that belongs to $\vec{\beta_0}$. We say that \( \bp \) is the \emph{minimal resolution} of \( \be \) induced by the regular pants decomposition $\cP$.  

\begin{remark}
From this, we can deduce that the 2-complex \( Y_2(C(\bdel)) \) is connected, i.e., any two vertices are related by a sequence of handleslides as follows. First, since \( X_2(C(\bdel)) \) is connected, we choose an edge-path \( \bp \) in \( X_2(C(\bdel)) \) connecting the two vertices. Then, we replace each edge of \( \bp \) with its minimal resolution, which is an edge-path in \( Y_2(C(\bdel)) \). Finally, concatenating these minimal resolutions gives an edge-path in \( Y_2(C(\bdel)) \) connecting the original vertices.
\end{remark}

\begin{theorem}\label{thm:min}
All minimal resolutions of \(\be = \iso{ \alpha_1, \vec{\beta} } \rightarrow \iso{ \alpha_2, \vec{\beta} }\) are homotopic in $Y_2(C(\bdel))$. 
\end{theorem}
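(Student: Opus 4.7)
The plan is to parametrize minimal resolutions of $\be$ by total orderings of the $\vec{\beta_0}$-boundary components of $F$, to show that any two such orderings are connected by a sequence of adjacent transpositions, and to verify that each adjacent transposition changes the minimal resolution by one of the six handleslide loops of Definition \ref{def:hslide}.

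First I would establish a bijection between regular pants decompositions $\cP$ of $F$ and total orderings of the $\vec{\beta_0}$-components of $\partial F$. Since $F$ is planar, every non-boundary-parallel isotopy class of simple closed curve in $F$ is determined by the partition of boundary components it induces. The regularity condition---each pants contains exactly one $\beta_0$-boundary---forces the dual graph of $\cP$ to be a path from the pants containing $\alpha_1$ to the pants containing $\alpha_2$, with the remaining $\beta_0$-curves appearing as leaves attached in some order. The ordering of these leaves determines $\cP$ completely and, together with $\cP$, determines the sequence $\Gamma(\cP) = (\gamma_1, \ldots, \gamma_k)$. Because the symmetric group on the $\vec{\beta_0}$-components is generated by adjacent transpositions, it suffices to prove that two minimal resolutions coming from orderings differing by a single swap are homotopic in $Y_2(C(\bdel))$.

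Next, suppose $\cP$ and $\cP'$ differ by swapping the $\beta_0$-curves at positions $j$ and $j+1$. The two minimal resolutions agree outside a single block of two consecutive edges, and the local difference is a square loop
\[
\iso{\gamma_{j-1}, \vec{\beta}} \to \iso{\gamma_j, \vec{\beta}} \to \iso{\gamma_{j+1}, \vec{\beta}} \to \iso{\gamma_j', \vec{\beta}} \to \iso{\gamma_{j-1}, \vec{\beta}},
\]
where $\gamma_j'$ is the unique curve cutting the four-holed sphere $P_j \cup P_{j+1}$ consistently with the swapped ordering. This loop realizes the commutativity of two handleslides of the moving curve $\gamma_{j-1}$ over the $\beta$-curves in $\vec{\beta}$ corresponding to the two boundary components $\beta_0^{(i_j)}$ and $\beta_0^{(i_{j+1})}$, performed in the two opposite orders.

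The main obstacle is verifying that this commutativity square falls into one of the six handleslide loops of Definition \ref{def:hslide}. In the generic case, $\beta_0^{(i_j)}$ and $\beta_0^{(i_{j+1})}$ arise from distinct curves in $\vec{\beta}$, so the two handleslides are over distinct $\beta$-curves, and the loop is a Type-III slide square. In the degenerate case, $\beta_0^{(i_j)}$ and $\beta_0^{(i_{j+1})}$ are the two sides of a single curve $\beta \in \vec{\beta}$; the two handleslides are then over the same $\beta$ but approach it from the opposite sides through the two pants $P_j$ and $P_{j+1}$, producing a Type-IV slide square. A short case analysis of the embedding of $P_j \cup P_{j+1}$ into $\Sigma$, together with the identification of $\gamma_j$ and $\gamma_j'$ as the two distinct curves in the four-holed sphere $P_j \cup P_{j+1}$ compatible with the regularity condition, confirms this dichotomy and completes the proof.
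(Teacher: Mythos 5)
Your proposal founders on the very first step. The claim that ``every non-boundary-parallel isotopy class of simple closed curve in $F$ is determined by the partition of boundary components it induces'' is false for planar surfaces with four or more boundary components: already in a four-holed sphere there are infinitely many isotopy classes of essential curves separating the boundary components into the same two pairs (they differ by Dehn twists; the corresponding curve graph is the Farey graph). Consequently there is no bijection between regular pants decompositions of $F$ and total orderings of the $\vec{\beta_0}$-components. A given ordering is realized by infinitely many non-isotopic decompositions, and these remain non-isotopic after embedding $F$ into $\Sigma$, so they give genuinely different edge-paths in $Y_1(C(\bdel))$. Your reduction to adjacent transpositions therefore never addresses the main content of the theorem: showing that two resolutions realizing the \emph{same} combinatorial ordering are homotopic. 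For the same reason, ``the unique curve $\gamma_j'$ cutting the four-holed sphere consistently with the swapped ordering'' does not exist.

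This missing case is exactly where the paper spends its effort. The paper's proof is an induction on $|\partial F|$ that isolates only the first curve $\gamma$ of each resolution, records it by an arc $c$ from $\alpha_1$ to a boundary component $\rho$, and then compares two resolutions by comparing their arcs: Lemma \ref{lem:bridge} handles arcs that are disjoint in the interior and end on different $\rho$'s (using a Type-III/IV square plus the inductive hypothesis), Lemma \ref{lem:rho} and Proposition \ref{prop:rhorho} handle arcs ending on the same $\rho$ by interpolating through a sequence of pairwise-disjoint arcs produced by the bigon criterion (Lemma \ref{lem:inter}), and the final case chains these together. The arc-interpolation machinery is precisely what your proposal omits, and without it the argument does not close. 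Your observation that the dual graph of a regular decomposition is a path, so that $\cP$ determines an ordering of the $\vec{\beta_0}$-components, is correct and could serve as a useful organizing principle for a proof, but it must be supplemented by an argument (such as the paper's) that all decompositions inducing a fixed ordering yield homotopic resolutions.
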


The proof will go by induction on $|\partial F|$. For the base case $|\partial F|=3$, the surface $F$ is a pair of pants with $\partial F = \alpha_1\cup \alpha_2 \cup \beta_0$. Then, the curves $\alpha_1$ and $\alpha_2$ are related by an elementary handleslide over $\beta_0$, which determines the unique minimal resolution of $\be$ in $Y_1(C(\bdel))$. Now, consider the case where $F$ has more than three boundary components, i.e., $|\partial F| = n > 3$.

Consider two regular pants decompositions \( \cP_1 \) and \( \cP_2 \) of \( F \), with \( \bp_1 \) and \( \bp_2 \) denoting the minimal resolutions induced by \( \cP_1 \) and \( \cP_2 \), respectively. For $i\in \{1,2\}$, let \( \gamma_i \) be the first curve of \( \Gamma(\cP_i) \), and let \( T_i \) be the pair of pants whose boundary includes \( \alpha_1 \) and \( \gamma_i \). Denote the third boundary component of \( T_i \) by \( \rho_i \), and choose a properly-embedded arc \( c_i \subset T_i \) such that \( \gamma_i \) is obtained by handle-sliding \( \alpha_1 \) over \( \rho_i \) along \( c_i \). See Figure \ref{bridge} for an example.

\begin{figure}[h]
{
   \fontsize{12pt}{12pt}\selectfont
   \def\svgwidth{2.1in}
   \begin{center}
   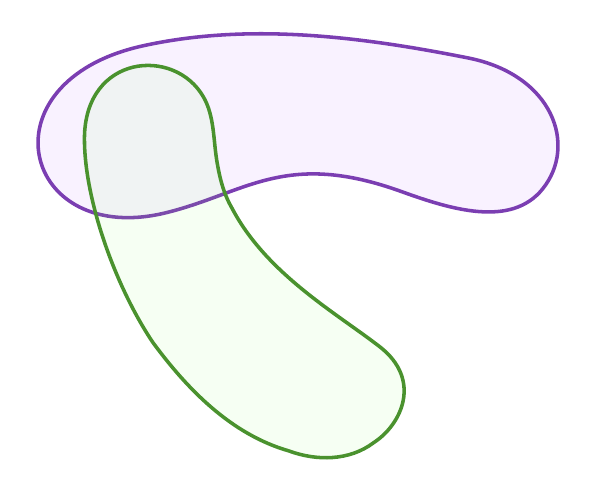
   \end{center}
   \caption{}
   \label{bridge}
}
\end{figure}

\begin{lemma}\label{lem:bridge}
Let \( \cP_1 \) and \( \cP_2 \) be two regular pants decompositions of \( F \). If \( \rho_1 \neq \rho_2 \) and \( c_1 \cap c_2 \subset \partial F \), then \( \bp_1 \) and \( \bp_2 \) are homotopic in \( Y_2(C(\bdel)) \).
\end{lemma}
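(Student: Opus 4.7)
The plan is to factor both $\bp_1$ and $\bp_2$ through a common intermediate cut-system $\iso{\gamma_{12}, \vec{\beta}}$, where $\gamma_{12}$ is obtained by sliding $\alpha_1$ past both $\rho_1$ and $\rho_2$. The first step is to construct $\gamma_{12}$ geometrically. Since $\rho_1 \neq \rho_2$ and $c_1 \cap c_2 \subset \partial F$, the union $N := T_1 \cup T_2$ is a regular neighborhood in $F$ of the graph $G = \alpha_1 \cup c_1 \cup \rho_1 \cup c_2 \cup \rho_2$, which has Euler characteristic $-2$. Hence $N$ is a planar subsurface of $F$ with $\chi(N) = -2$, i.e., a four-holed sphere. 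Three of its boundary circles are $\alpha_1, \rho_1, \rho_2 \subset \partial F$, and the fourth is a new curve $\gamma_{12}$ in the interior of $F$. Inside $N$, the curves $\gamma_1$ and $\gamma_2$ each separate $N$ into two pairs of pants, giving
\[N = \{\alpha_1,\rho_1,\gamma_1\} \cup \{\gamma_1,\rho_2,\gamma_{12}\} = \{\alpha_1,\rho_2,\gamma_2\} \cup \{\gamma_2,\rho_1,\gamma_{12}\},\]
and the four corresponding handleslides form a Type-III square, a $2$-cell in $Y_2(C(\bdel))$. In particular $\iso{\gamma_{12}, \vec{\beta}}$ is itself a cut-system, reached from $\iso{\alpha_1, \vec{\beta}}$ by two successive handleslides.

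Next I use this square to reroute both minimal resolutions through $\iso{\gamma_{12}, \vec{\beta}}$. Fix any regular pants decomposition $\cP_3$ of the complement $F \setminus N$; this is a planar surface with boundary $\{\alpha_2, \gamma_{12}\} \cup (\vec{\beta_0} \setminus \{\rho_1, \rho_2\})$ and $|\partial F| - 2$ boundary components. Define
\[\cP_1' = \{\alpha_1,\rho_1,\gamma_1\} \cup \{\gamma_1,\rho_2,\gamma_{12}\} \cup \cP_3 \quad \text{and} \quad \cP_2' = \{\alpha_1,\rho_2,\gamma_2\} \cup \{\gamma_2,\rho_1,\gamma_{12}\} \cup \cP_3;\]
both are regular pants decompositions of $F$, since each pair of pants meets $\vec{\beta_0}$ in exactly one curve. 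The induced minimal resolutions $\bp_1', \bp_2'$ of $\be$ share a common tail from $\iso{\gamma_{12}, \vec{\beta}}$ to $\iso{\alpha_2, \vec{\beta}}$ (namely the one induced by $\cP_3$). Their initial two-edge subpaths, $\iso{\alpha_1, \vec{\beta}} \to \iso{\gamma_1, \vec{\beta}} \to \iso{\gamma_{12}, \vec{\beta}}$ and $\iso{\alpha_1, \vec{\beta}} \to \iso{\gamma_2, \vec{\beta}} \to \iso{\gamma_{12}, \vec{\beta}}$, are the two sides of the Type-III $2$-cell from the previous paragraph, so they are homotopic rel endpoints in $Y_2(C(\bdel))$. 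Hence $\bp_1' \simeq \bp_2'$.

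Finally, I invoke the inductive hypothesis of Theorem \ref{thm:min} to compare $\bp_i$ with $\bp_i'$. Both $\bp_1$ and $\bp_1'$ begin with the same initial handleslide $\iso{\alpha_1, \vec{\beta}} \to \iso{\gamma_1, \vec{\beta}}$ dictated by the pair of pants $T_1 = \{\alpha_1, \rho_1, \gamma_1\}$, and their remaining portions are two minimal resolutions of the edge $\iso{\gamma_1, \vec{\beta}} \to \iso{\alpha_2, \vec{\beta}}$ in $X_1(C(\bdel))$ whose associated planar region $F \setminus T_1$ has $|\partial F| - 1 < |\partial F|$ boundary components. The inductive hypothesis therefore yields $\bp_1 \simeq \bp_1'$, and symmetrically $\bp_2 \simeq \bp_2'$. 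Concatenating, $\bp_1 \simeq \bp_1' \simeq \bp_2' \simeq \bp_2$, as required. The main technical point is carefully verifying that the regular neighborhood $N$ really is the claimed four-holed sphere with $\alpha_1, \rho_1, \rho_2, \gamma_{12}$ on its boundary; once the Type-III square is correctly identified, the rest is a clean combination of this $2$-cell with the induction.
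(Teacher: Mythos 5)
Your proposal is correct and follows essentially the same route as the paper: the four-holed sphere $N$ with new boundary curve $\gamma_{12}$ is the paper's tubular neighborhood $T_3$ with $\gamma_3=\partial T_3$, the Type-III square through $\iso{\gamma_{12},\vec{\beta}}$ is the paper's ``square on the left,'' and the comparison of the tails of $\bp_i$ and $\bp_i'$ via the inductive hypothesis on $F\setminus T_i$ (with $n-1$ boundary components) is exactly the paper's use of the ``two loops on the right.'' The only difference is organizational: you package the reroute as two explicit auxiliary resolutions $\bp_1',\bp_2'$ sharing a common tail $\bp_3$, whereas the paper phrases the same decomposition as one square plus two null-homotopic loops.
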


\begin{proof}
Consider a tubular neighborhood $T_3$ of the graph $\rho_1 \cup c_1 \cup \alpha_1 \cup c_2 \cup \rho_2$ in $F$, and let $\gamma_3 = \partial T_3$. By isotoping \( \gamma_i \) while keeping the other curves in \( \Gamma(\cP_i) \) fixed, we may assume that \( T_i \) is contained in \( T_3 \) and that \( \gamma_1 \) and \( \gamma_2 \) intersect transversely at two points. See Figure \ref{bridge}.

Let $\bp_3$ be the minimal resolution of the edge $\iso{ \gamma_3,\vec{\beta}} \to \iso{ \alpha_2, \vec{\beta}}$ induced by some pants decomposition of $F \setminus T_3$, and let $\bp_i'$ be the minimal resolution induced by $\cP_i \setminus \{T_i\}$. This construction produces the following graph in $Y_1(C(\bdel))$:
\[
\begin{tikzcd}[column sep=small]
                                                            & {\iso{ \gamma_1,\vec{\beta}}} \arrow[rd] \arrow[rrrrd, "\bp_1'", bend left]   &                                                            &  &  &                                       \\
{\iso{ \alpha_1,\vec{\beta}}} \arrow[rd] \arrow[ru] &                                                                                       & {\iso{ \gamma_3,\vec{\beta}}} \arrow[rrr, "\bp_3"] &  &  & {\iso{ \alpha_2,\vec{\beta}}}, \\
                                                            & {\iso{ \gamma_2,\vec{\beta}}} \arrow[ru] \arrow[rrrru, "\bp_2'"', bend right] &                                                            &  &  &                                      
\end{tikzcd}
\]
Since \( |\partial (F\setminus T_i)| = n - 1 \) for $i=1$ or $2$, the induction hypothesis implies that all minimal resolutions of the edge \( \iso{ \gamma_i, \vec{\beta} } \to \iso{ \alpha_2, \vec{\beta} } \) are homotopic in \( Y_2(C(\bdel)) \). Consequently, the two loops on the right are null-homotopic in \( Y_2(C(\bdel)) \). Moreover, since the square on the left is either Type-III or Type-IV, it is null-homotopic in \( Y_2(C(\bdel)) \). Together, we conclude that \( \bp_1 \) and \( \bp_2 \) are homotopic in \( Y_2(C(\bdel)) \).
\end{proof}

\begin{lemma}\label{lem:rho}
Let \( \cP_1 \) and \( \cP_2 \) be two regular pants decompositions of \( F \) such that \( \rho_1 = \rho_2 = \rho \). If \( c_1 \) and \( c_2 \) do not intersect in the interior of \( F \), then \( \bp_1 \) and \( \bp_2 \) are homotopic in \( Y_2(C(\bdel)) \).
\end{lemma}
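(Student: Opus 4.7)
The plan is to reduce to the inductive hypothesis of Theorem \ref{thm:min} (induction on $n = |\partial F|$) by showing that $\bp_1$ and $\bp_2$ start with a common first edge in $Y_1(C(\bdel))$; the remainders will then be minimal resolutions of a single edge associated with a surface having one fewer boundary component.

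First I would argue that $\gamma_1$ and $\gamma_2$ represent the same isotopy class in $\Sigma$. Since each $\cP_i$ is regular and neither $\alpha_1$ nor $\gamma_i$ lies in $\vec{\beta}_0$, the third boundary $\rho_i$ of $T_i$ must lie in $\vec{\beta}_0$. With $\rho_1 = \rho_2 = \rho$, each $T_i \subset F$ is therefore a pair of pants with boundary $\alpha_1 \cup \rho \cup \gamma_i$, so $\gamma_i$ is a simple closed curve in the interior of $F$ that separates $\{\alpha_1, \rho\}$ from the remaining components of $\partial F$. Because $F$ is planar by Proposition \ref{prop:F}, such a separator is unique up to isotopy, so $\gamma_1 \sim \gamma_2 =: \gamma$. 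Combined with the fact that $\rho \subset \partial F$ (which forces any arc in $F$ from $\alpha_1$ to $\rho$ to approach $\rho$ from the interior side) and the disjointness hypothesis $c_1 \cap c_2 \cap \mathrm{int}(F) = \varnothing$ (which allows $T_1$ and $T_2$ to be directly identified up to ambient isotopy in $F$), the first edges of $\bp_1$ and $\bp_2$ coincide with a single edge
\[ \be_0 = \iso{\alpha_1, \vec{\beta}} \to \iso{\gamma, \vec{\beta}} \]
in $Y_1(C(\bdel))$.

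I then split $\bp_i = \be_0 \ast \bp_i'$, where $\bp_i'$ is the minimal resolution of the edge $\be' = \iso{\gamma, \vec{\beta}} \to \iso{\alpha_2, \vec{\beta}}$ induced by the regular pants decomposition $\cP_i \setminus \{T_i\}$ of $F \setminus T$, with $T \sim T_1 \sim T_2$. A direct identification via Proposition \ref{prop:F} shows that the surface associated with $\be'$ is exactly $F \setminus T$, which has $n - 1$ boundary components. The inductive hypothesis of Theorem \ref{thm:min} then yields $\bp_1' \sim \bp_2'$ in $Y_2(C(\bdel))$, and concatenating with the shared first edge $\be_0$ gives $\bp_1 \sim \bp_2$, as desired.

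The main point requiring care is the very first step: verifying that $\gamma_1$ and $\gamma_2$ define the same cut-system and that the two handleslide moves determine the same edge in $Y_1$. The planarity of $F$ makes the isotopy class of the separator $\gamma_i$ canonical once $\rho_i$ is fixed, and the fact that $\rho \subset \partial F$ removes any two-sided ambiguity at $\rho$; the disjointness of $c_1$ and $c_2$ in $\mathrm{int}(F)$ is what permits a direct isotopy $T_1 \to T_2$ inside $F$. Once these identifications are in place, the remainder of the argument is a routine application of the induction, paralleling the end of the proof of Lemma \ref{lem:bridge}.
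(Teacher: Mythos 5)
There is a genuine gap at the first and crucial step. You claim that, because $F$ is planar, a simple closed curve separating $\{\alpha_1,\rho\}$ from the remaining components of $\partial F$ is unique up to isotopy, and hence $\gamma_1\simeq\gamma_2$. This is false: in a planar surface the partition of boundary components induced by a separating curve does not determine its isotopy class. Concretely, let $F$ be a four-holed sphere with $\partial F=\alpha_1\cup\alpha_2\cup\rho\cup\beta_0'$ (so $\vec{\beta_0}=\{\rho,\beta_0'\}$), drawn as a disk with outer boundary $\alpha_1$, with $\rho$ in the middle and $\alpha_2,\beta_0'$ to its left and right. Take $c_1$ to be an arc from $\alpha_1$ to $\rho$ entering from above and $c_2$ one entering from below; these are disjoint in the interior and both have $\rho_1=\rho_2=\rho$, yet $\gamma_1=\partial N(\alpha_1\cup c_1\cup\rho)$ and $\gamma_2=\partial N(\alpha_1\cup c_2\cup\rho)$ are not isotopic in $F$ (they are distinct ``slopes'' inducing the same partition; equivalently, $c_1$ and $c_2$ are non-isotopic arcs, since an isotopy between them would have to sweep across $\alpha_2$ or $\beta_0'$). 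So $\bp_1$ and $\bp_2$ do not share a first edge, and the reduction to a single edge over $F\setminus T$ collapses. Indeed, if $\rho_1=\rho_2$ forced $\gamma_1\simeq\gamma_2$, then Lemma \ref{lem:inter} and Proposition \ref{prop:rhorho} would be superfluous; the whole difficulty of Theorem \ref{thm:min} is precisely that the first curve of a minimal resolution depends on the arc, not just on the target boundary component.

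The disjointness hypothesis is used quite differently in the paper: cutting $F$ along $c_1\cup c_2$ yields two punctured disks, and since $|\vec{\beta_0}|\geq 2$ one of them contains some $\rho'\in\vec{\beta_0}$ with $\rho'\neq\rho$. One then chooses an arc $c'$ from $\alpha_1$ to $\rho'$ in that disk, so that $c'$ is interior-disjoint from both $c_1$ and $c_2$ and has $\rho'\neq\rho_i$, and applies Lemma \ref{lem:bridge} twice to get $\bp_1\simeq\bp'\simeq\bp_2$ through the auxiliary resolution $\bp'$. Your proof never invokes Lemma \ref{lem:bridge} and cannot be repaired without some such detour (or an argument via Type-IV squares comparing the two slides of $\alpha_1$ over the same $\rho$ from the two sides, which is essentially what the pentagon/square relations are for).
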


\begin{figure}[h]
{
   \fontsize{12pt}{11pt}\selectfont
   \def\svgwidth{1.8in}
   \begin{center}
   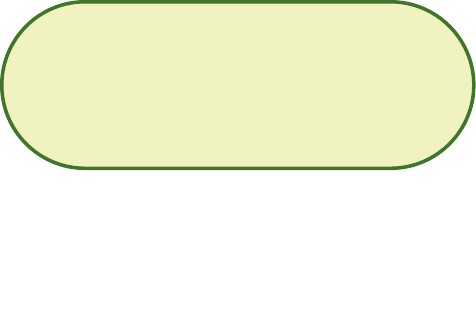
   \end{center}
   \caption{}
   \label{rho}
}
\end{figure}

\begin{proof}
Since \( c_1 \) and \( c_2 \) may intersect only along the boundary of \( F \), cutting \( F \) along \( c_1 \cup c_2 \) results in two punctured disks. In the disk that contains a boundary component \( \rho' \in \vec{\beta_0} \) with \( \rho' \neq \rho \), choose a properly-embedded arc \( c' \) connecting \( \alpha_1 \) to \( \rho' \). See Figure \ref{rho}.

Now, let \( T' \) be a tubular neighborhood of \( \alpha_1 \cup c' \cup \rho' \). Complete \( T' \) into a regular pants decomposition \( \cP' \) of \( F \) by choosing a regular pants decomposition for \( F \setminus T' \). Denote by \( \bp' \) the minimal resolution induced by \( \cP' \). By Lemma \ref{lem:bridge}, both \( \bp_1 \) and \( \bp_2 \) are homotopic to \( \bp' \) in \( Y_2(C(\bdel)) \), which implies that \( \bp_1 \) and \( \bp_2 \) are homotopic in \( Y_2(C(\bdel)) \).
\end{proof}

For the general cases, we first prove that any two arcs connecting $\alpha_1$ and $\rho$ are related by a sequence of arcs such that consecutive curves are disjoint.

\begin{lemma}\label{lem:inter}
Let $S$ be a cylinder decorated with finitely many distinguished disks. Suppose $\lambda$ and $\lambda'$ are two properly-embedded arcs connecting two boundary components of $S$. Then, there exists a sequence of properly-embedded arcs $\Lambda = (\lambda_1, \dots, \lambda_m)$, each connecting two boundary components of $S$, such that $\lambda_1 = \lambda$, $\lambda_m = \lambda'$, and consecutive arcs in the sequence are disjoint.
\end{lemma}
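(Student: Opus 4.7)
The plan is to induct on $n := |\lambda \cap \lambda'|$, after first perturbing $\lambda'$ slightly (if necessary) so that $\lambda$ and $\lambda'$ meet transversely; any such perturbation is disjoint from $\lambda'$ itself, so it can be absorbed into the sequence as an extra term at the end. The base case $n = 0$ is immediate: take $\Lambda = (\lambda, \lambda')$.

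For the inductive step with $n \geq 1$, the goal is to construct an intermediate arc $\mu$ that is disjoint from $\lambda'$ and has $|\mu \cap \lambda| < n$. I would orient both arcs from $\partial_-$ to $\partial_+$ (the two boundary circles of $S$), let $p = \lambda(t_0)$ be the first intersection of $\lambda$ with $\lambda'$ as one travels along $\lambda$ from its $\partial_-$-endpoint, and let $s_0$ be such that $\lambda'(s_0) = p$. The concatenation $\mu_0 := \lambda|_{[0,t_0]} \cdot \lambda'|_{[s_0,1]}$ is then a piecewise-smooth arc from $\partial_-$ to $\partial_+$ with a single corner at $p$. Next I would smooth this corner into the quadrant of a disk-neighborhood of $p$ that the concatenation enters, and push the $\lambda$-segment off $\lambda$ and the $\lambda'$-segment off $\lambda'$ on the side consistent with that quadrant; taking the push-off small yields an embedded arc $\mu$ disjoint from the distinguished disks of $S$. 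The intersection counts then follow: $\mu \cap \lambda' = \emptyset$ because the $\lambda$-segment of $\mu_0$ met $\lambda'$ only at $p$ (by minimality of $t_0$), and that intersection has been smoothed away; similarly $|\mu \cap \lambda| \leq |\{q \in \lambda \cap \lambda' : s_q > s_0\}| \leq n - 1$. Applying the inductive hypothesis to the pair $(\lambda, \mu)$ yields a sequence $\lambda = \lambda_1, \ldots, \lambda_k = \mu$ with consecutive terms disjoint, and appending $\lambda'$ produces the desired $\Lambda$.

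The step requiring the most care is the local analysis at $p$: one must verify that the smoothing and the two push-offs combine into a globally embedded arc transverse to $\lambda$ and $\lambda'$ with the advertised intersection counts. The key observation is that at $p$ the two crossing arcs divide a disk neighborhood into four quadrants, the concatenation $\mu_0$ selects a unique one of these, and choosing both push-offs to lie in that same quadrant guarantees embeddedness while simultaneously cancelling the intersection at $p$ with each of $\lambda$ and $\lambda'$.
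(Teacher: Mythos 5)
Your argument is correct, but it is a genuinely different route from the paper's. The paper proceeds by iteratively removing bigons and half-bigons between $\lambda_i$ and $\lambda'$ (each move replacing a subarc of $\lambda_i$ by a parallel copy of the corresponding subarc of $\lambda'$, so that $\lambda_{i+1}$ is disjoint from $\lambda_i$ and meets $\lambda'$ in fewer points), and then invokes the bigon criterion for arcs to conclude that once no bigons or half-bigons remain, the two arcs are in minimal position on the underlying cylinder and hence disjoint. Your proof instead resolves the \emph{first} crossing of $\lambda$ with $\lambda'$ by concatenating $\lambda|_{[0,t_0]}$ with $\lambda'|_{[s_0,1]}$, smoothing the corner into the quadrant determined by the concatenation, and pushing off; this produces in one step an arc $\mu$ disjoint from $\lambda'$ with $|\mu\cap\lambda|\le n-1$, after which you induct on the intersection number of the pair $(\lambda,\mu)$. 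Your version is more elementary and self-contained: it needs no appeal to minimal-position theory, and it sidesteps the point one must be careful about in the paper's argument, namely that a bigon may contain distinguished disks (so the "isotopy" across it is really a rerouting, not an isotopy). The local analysis at $p$ that you flag is indeed the crux, and your quadrant argument handles it correctly; the embeddedness of $\mu$ away from $p$ follows because $\lambda|_{[0,t_0]}$ and $\lambda'|_{[s_0,1]}$ meet only at $p$ by the minimality of $t_0$. The one imprecise sentence is the claim that \emph{any} perturbation of $\lambda'$ achieving transversality is disjoint from $\lambda'$; that is false for a generic wiggle, but you can take a one-sided normal push-off (which is disjoint from $\lambda'$) and then perturb within the open condition of disjointness to achieve transversality with $\lambda$, so the fix is immediate. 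Alternatively, transversality is already part of the standing general-position assumptions in the paper, so this preliminary step can be dispensed with.
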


\begin{figure}[h]
{
   \fontsize{12pt}{11pt}\selectfont
   \def\svgwidth{2.9in}
   \begin{center}
\begingroup%
  \makeatletter%
  \providecommand\color[2][]{%
    \errmessage{(Inkscape) Color is used for the text in Inkscape, but the package 'color.sty' is not loaded}%
    \renewcommand\color[2][]{}%
  }%
  \providecommand\transparent[1]{%
    \errmessage{(Inkscape) Transparency is used (non-zero) for the text in Inkscape, but the package 'transparent.sty' is not loaded}%
    \renewcommand\transparent[1]{}%
  }%
  \providecommand\rotatebox[2]{#2}%
  \newcommand*\fsize{\dimexpr\f@size pt\relax}%
  \newcommand*\lineheight[1]{\fontsize{\fsize}{#1\fsize}\selectfont}%
  \ifx\svgwidth\undefined%
    \setlength{\unitlength}{346.89999318bp}%
    \ifx\svgscale\undefined%
      \relax%
    \else%
      \setlength{\unitlength}{\unitlength * \real{\svgscale}}%
    \fi%
  \else%
    \setlength{\unitlength}{\svgwidth}%
  \fi%
  \global\let\svgwidth\undefined%
  \global\let\svgscale\undefined%
  \makeatother%
  \begin{picture}(1,0.5347032)%
    \lineheight{1}%
    \setlength\tabcolsep{0pt}%
    \put(0,0){\includegraphics[width=\unitlength,page=1]{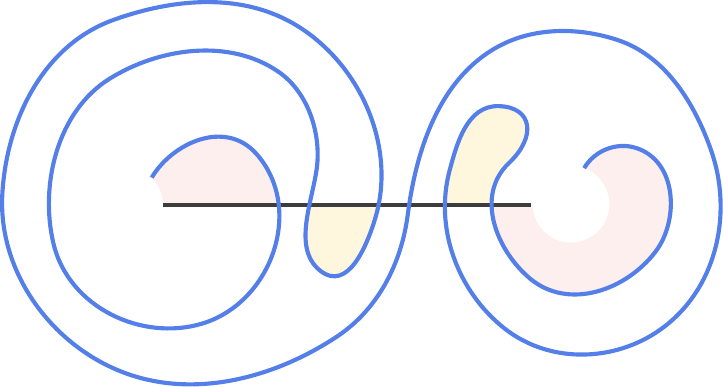}}%
    \put(0.26953382,0.17275553){\color[rgb]{0.23921569,0.23921569,0.23921569}\makebox(0,0)[lt]{\lineheight{1.25}\smash{\begin{tabular}[t]{l}$\lambda'$\end{tabular}}}}%
    \put(0.64365071,0.02541209){\color[rgb]{0.32941176,0.49803922,0.91372549}\makebox(0,0)[lt]{\lineheight{1.25}\smash{\begin{tabular}[t]{l}$\lambda$\end{tabular}}}}%
    \put(0,0){\includegraphics[width=\unitlength,page=2]{bigon.pdf}}%
  \end{picture}%
\endgroup%

   \end{center}
   \caption{}
   \label{bigon}
}
\end{figure}

\begin{proof}
We construct \( \Lambda = (\lambda_1, \dots, \lambda_m) \) recursively by applying elementary moves that reduce interior intersections with $\lambda'$. Suppose $\lambda$ and $\lambda'$ are disjoint, then we set $m=2$ and let $\Lambda = (\lambda, \lambda')$.

Now, we consider the case where $\lambda$ and $\lambda'$ intersect (see Figure \ref{bigon} for example). Suppose we have constructed a sequence of arcs \((\lambda_2, \dots, \lambda_i) \) such that $\lambda_j$ and $\lambda_{j+1}$ are disjoint for $1\le j \le i-1$. If \( \lambda_i \) and \( \lambda' \) form a bigon or a half-bigon on \( S \), we isotope \( \lambda_i \) across the minimal (half-)bigon to obtain a new arc \( \lambda_{i+1} \) that intersects \( \lambda' \) at fewer points (see Figure \ref{gons}), such that  \( \lambda_i \) and \( \lambda_{i+1} \) do not intersect in the interior of $S$. 

\begin{figure}[h]
{
   \fontsize{12pt}{11pt}\selectfont
   \def\svgwidth{4.9in}
   \begin{center}
   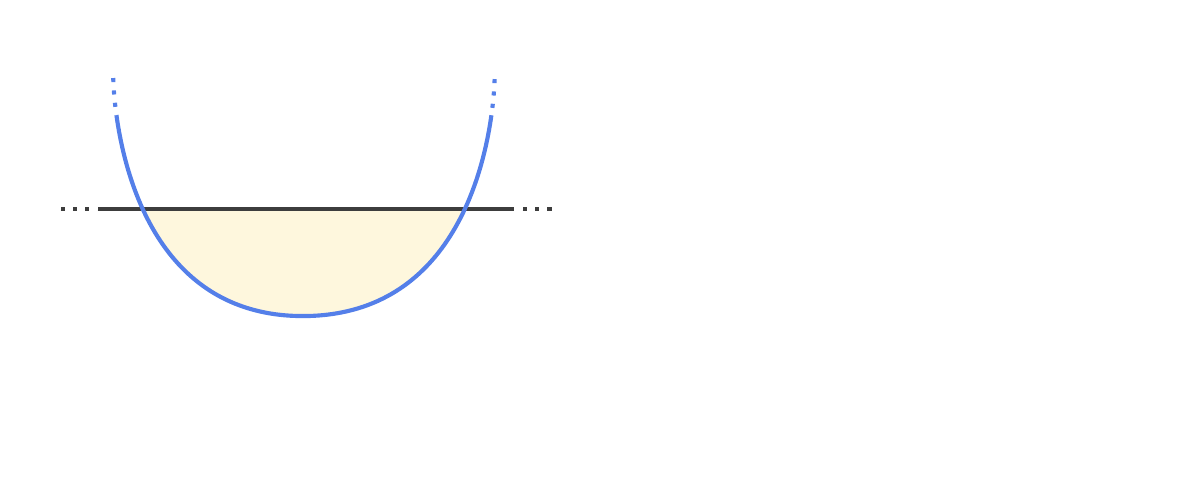
   \end{center}
   \caption{}
   \label{gons}
}
\end{figure}

Otherwise, it follows from the bigon criterion for arcs (cf. Section 1.2.7 in \cite{PMCG}) that if \( \lambda_i \) and \( \lambda' \) do not form bigons or half-bigons on $S$, then they are in minimal position under general isotopies (allowing moving end points along the boundary) and thus disjoint on $S$. Thus, we set $m=i+1$ and let $$\Lambda = (\lambda, \lambda_2,\dots, \lambda_{i}, \lambda'),$$ which satisfies the desired conditions.
\end{proof}

We enhance Lemma \ref{lem:rho} by interpolating between the arcs $c_1$ and $c_2$ in $F$ using Lemma \ref{lem:inter}.

\begin{proposition}\label{prop:rhorho}
Let \( \cP_1 \) and \( \cP_2 \) be two regular pants decompositions of \( F \). If \( \rho_1 = \rho_2 = \rho \), then \( \bp_1 \) and \( \bp_2 \) are homotopic in \( Y_2(C(\bdel)) \).
\end{proposition}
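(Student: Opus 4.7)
The plan is to interpolate between the arcs $c_1$ and $c_2$ through a finite sequence of arcs with consecutive members disjoint, and then apply Lemma \ref{lem:rho} at every step of the sequence. This reduces the general case to the special case already handled.

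First, I would set up a cylinder in order to invoke Lemma \ref{lem:inter}. Since $F$ has genus zero, capping off every boundary component of $F$ other than $\alpha_1$ and $\rho$ with a disk produces a cylinder $S$, which we decorate by declaring those $|\partial F|-2$ filling disks to be distinguished disks. A properly-embedded arc in $S$ connecting the two boundary components is then the same as a properly-embedded arc in $F$ connecting $\alpha_1$ to $\rho$ whose interior avoids the other boundary components of $F$; in particular, both $c_1$ and $c_2$ are such arcs. Applying Lemma \ref{lem:inter} to $c_1$ and $c_2$ inside $S$ yields a sequence of properly-embedded arcs $(\lambda_1, \dots, \lambda_m)$ in $S$ with $\lambda_1 = c_1$, $\lambda_m = c_2$, and $\lambda_i \cap \lambda_{i+1} = \varnothing$ in $S$ for every $i$.

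For each $i$, let $T_i'$ denote a tubular neighborhood in $F$ of $\alpha_1 \cup \lambda_i \cup \rho$; this is a pair of pants whose boundary consists of (curves isotopic to) $\alpha_1$, $\rho$, and a new curve $\gamma_i'$. Extend $T_i'$ to a regular pants decomposition $\cP_i'$ of $F$ by choosing any regular pants decomposition of the planar surface $F \setminus T_i'$, and let $\bp_i'$ be the minimal resolution induced by $\cP_i'$. We can arrange $\cP_1' = \cP_1$ and $\cP_m' = \cP_2$, so that $\bp_1' = \bp_1$ and $\bp_m' = \bp_2$. For each $1 \le i \le m-1$, the two pants decompositions $\cP_i'$ and $\cP_{i+1}'$ share the same distinguished curve $\rho_i' = \rho_{i+1}' = \rho$, and their respective arcs $\lambda_i$ and $\lambda_{i+1}$ are disjoint in $S$, hence a fortiori disjoint in the interior of $F$. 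Lemma \ref{lem:rho} therefore shows that $\bp_i'$ and $\bp_{i+1}'$ are homotopic in $Y_2(C(\bdel))$, and chaining these homotopies yields $\bp_1 \simeq \bp_2$.

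The main obstacle is the cylinder reduction: one must confirm that arcs in the decorated cylinder $S$ correspond exactly to arcs in $F$ avoiding the other boundary components, so that Lemma \ref{lem:inter} produces arcs genuinely usable to build the intermediate regular pants decompositions $\cP_i'$ of $F$. Once this identification is in place, the chain of homotopies provided by Lemma \ref{lem:rho} is a direct concatenation, and no additional induction on $|\partial F|$ is needed beyond what is already embedded in the proof of Lemma \ref{lem:rho}.
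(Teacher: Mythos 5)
Your proof is correct and follows essentially the same route as the paper: cap off the extra boundary components of $F$ with distinguished disks to get a decorated cylinder, apply Lemma \ref{lem:inter} to interpolate between $c_1$ and $c_2$ through arcs with consecutive members disjoint, build intermediate regular pants decompositions from the resulting arcs, and chain applications of Lemma \ref{lem:rho}. The only small omission is that the paper first isotopes $c_1$ and $c_2$ so their endpoints on $\alpha_1 \cup \rho$ are distinct before invoking Lemma \ref{lem:inter}; this is worth stating, since Lemma \ref{lem:rho} only asks for disjointness in the interior of $F$, and the interpolating arcs from Lemma \ref{lem:inter} may still meet on $\partial F$.
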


\begin{proof}
We first isotope the arcs \( c_1 \) and \( c_2 \) so that their endpoints are distinct. We treat all components of \( \partial F \) other than \( \alpha_1 \) and \( \rho \) as distinguished disks (by gluing disks to the punctured sphere \( F \) along those boundary components), and apply Lemma \ref{lem:inter} to the resulting cylinder with boundary \( \alpha_1 \cup \rho \). It follows that there exists a sequence of arcs \( (\lambda_1, \dots, \lambda_m) \), each connecting $\alpha_1$ and $\rho$, such that \( \lambda_1 = c_1 \), \( \lambda_m = c_2 \), and consecutive arcs in the sequence are disjoint.

Denote by $\delta_i$ the curve obtained by handle-sliding $\alpha_1$ over $\rho$ along the arc $\lambda_i$ for $1 < i < m$, and let $\cP_i$ be a regular pants decomposition such that $\delta_i$ is the first curve in $\Gamma(\cP_i)$. Denote by $\bp_i$ the minimal resolution induced by $\cP_i$ for $1 < i < m$. Then, Lemma \ref{lem:rho} implies that $\bp_1$ and $\bp_2$ are homotopic in $Y_2(C(\bdel))$ through minimal resolutions $\bp_i$ with $1<i<m$.
\end{proof}

\begin{proof}[Proof of Theorem \ref{thm:min}]
Let \( \cP_1 \) and \( \cP_2 \) be two regular pants decompositions of \( F \). We are left with the case where $\rho_1\neq \rho_2$ and the arcs $c_1$ and $c_2$ intersect in the interior of $F$. Since $F \setminus c_1$ is connected, we pick an arc $c_3$ connecting $\alpha_1$ and $\rho_2$ such that $c_1\cap c_3 \subset \partial F$. Denote by $\gamma_3$ the curve obtained by handle-sliding $\alpha_1$ over $\rho_2$ along the arc $c_3$, and let $\cP_3$ be a regular pants decomposition such that $\gamma_3$ is the first curve in $\Gamma(\cP_3)$. 

Denote by $\bp_i$ the minimal resolution induced by $\cP_i$ for $i\in\{1,2,3\}$. By Lemma \ref{lem:bridge}, $\bp_1$ is homotopic to $\bp_3$. By Proposition \ref{prop:rhorho}, $\bp_2$ is homotopic to $\bp_3$. Thus, we conclude that $\bp_1$ is homotopic to $\bp_2$ in $Y_2(C(\bdel))$.
\end{proof}

\begin{theorem}\label{thm:simpc}
The $2$-complex \(Y_2(C(\bdel))\) is simply-connected.
\end{theorem}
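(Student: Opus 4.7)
The plan is to leverage the simple-connectivity of $X_2(C(\bdel))$ together with Theorem \ref{thm:min}. Since every handleslide is in particular a simple move, $Y_1(C(\bdel))$ is a spanning subgraph of $X_1(C(\bdel))$. Given a loop $\gamma$ in $Y_1(C(\bdel))$, I would view it as a loop in $X_1(C(\bdel))$; by Theorem \ref{thm:xsimpc}, $\gamma$ bounds a combinatorial $2$-chain $\Delta$ in $X_2(C(\bdel))$, so $\gamma$ equals a sum of boundaries of $2$-cells of $X_2(C(\bdel))$ of the three prescribed kinds (Type I triangles, Type II triangles, and self-separated squares).

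Next, for each simple-move edge $\be$ appearing in $\Delta$, I would choose a minimal resolution $r(\be)$ in $Y_1(C(\bdel))$, which by Theorem \ref{thm:min} is well-defined up to homotopy in $Y_2(C(\bdel))$. The key observation is that when $\be$ is already a handleslide, the region $F$ of Proposition \ref{prop:F} is itself a single pair of pants, and its unique regular pants decomposition forces $r(\be) = \be$. Therefore, after replacing every edge in the boundary of each $2$-cell of $\Delta$ by its minimal resolution, the total boundary is still $\gamma$. It thus suffices to prove the following key claim: \emph{for each $2$-cell $D$ of $X_2(C(\bdel))$, the loop obtained by substituting minimal resolutions for the edges of $\partial D$ is null-homotopic in $Y_2(C(\bdel))$}.

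I would prove this claim by case analysis on the type of $D$. For a self-separated square, the two pairs of opposite edges are supported in disjoint connected components of $\Sigma\setminus\vec{\alpha}$, so their minimal resolutions commute, and the resolved square decomposes into a grid of Type-I squares from Definition \ref{def:hslide}(2). For a Type II triangle with vertices $\iso{\alpha_{i-1},\alpha_{i+1},\vec{\alpha}}$ ($i\in\mathbb{Z}_3$), I would select regular pants decompositions of the three regions $F_i$ from Proposition \ref{prop:F} that restrict compatibly to a common decomposition of their union; the resolved loop then decomposes as a sum of slide triangles, slide pentagons, and squares of the four kinds, with Theorem \ref{thm:min} invoked to switch between competing choices. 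A Type I triangle $\iso{\alpha_i,\vec{\alpha}}$ ($i\in\mathbb{Z}_3$) is handled similarly, by building a common regular refinement of the three pants decompositions over the region of $\Sigma\setminus\vec{\alpha}$ bounded by $\alpha_1\cup\alpha_2\cup\alpha_3$. These arguments parallel the proofs of Propositions A.4 and A.5 in \cite{JTZ}.

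The main obstacle is this final case analysis, most notably the Type I triangle, where constructing a pants decomposition that is \emph{simultaneously} regular for all three edges requires careful combinatorial control of how the three regions $F_i$ fit together inside $\Sigma\setminus\vec{\alpha}$. The subtlety is that arbitrary handleslide relations need not be null-homotopic in $Y_2(C(\bdel))$---only the six kinds of loops listed in Definition \ref{def:hslide} are attached---so every intermediate face encountered in the decomposition must be made to match one of the permitted generators. Theorem \ref{thm:min} provides the crucial flexibility to swap between alternative regular pants decompositions without leaving the homotopy class in $Y_2(C(\bdel))$, and will be applied repeatedly to align choices along shared edges of neighbouring $2$-cells of $\Delta$.
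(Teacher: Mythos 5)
Your overall strategy coincides with the paper's: view the loop in $X_1(C(\bdel))$, use Theorem \ref{thm:xsimpc} to fill it with $2$-cells of $X_2(C(\bdel))$, replace every edge by a minimal resolution (noting, correctly and more explicitly than the paper, that a handleslide edge resolves to itself since its region $F$ is a single pair of pants), and reduce to showing that the resolved boundary of each $2$-cell of $X_2(C(\bdel))$ is null-homotopic in $Y_2(C(\bdel))$, with Theorem \ref{thm:min} used to reconcile competing choices of resolution along shared edges. That reduction is sound and is exactly how the paper proceeds.

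The gap is in the case analysis, which is where the actual content of the proof lives, and your sketch of it has the two triangle cases essentially interchanged. For a Type I triangle (vertices $\iso{\alpha_i,\vec{\alpha}}$), Proposition \ref{prop:F} produces three regions $F_0,F_1,F_2$, and the paper shows $F_i=F_{i-1}\cup F_{i+1}$ for some $i$; a regular pants decomposition of $F_{i-1}$ together with one of $F_{i+1}$ is automatically a regular pants decomposition of $F_i$, so the resolved triangle is the concatenation of two resolutions followed by the reverse of their concatenation, hence null-homotopic essentially for free. This is the easy case, not the hard one, and there is no need to construct a decomposition "simultaneously regular for all three edges." The genuinely hard case is the Type II triangle (vertices $\iso{\alpha_{i-1},\alpha_{i+1},\vec{\alpha}}$), for which your picture of "three regions restricting compatibly to their union" does not apply: Proposition \ref{prop:F} yields a \emph{single} component $F$ of $\Sigma\setminus(\alpha_0\cup\alpha_1\cup\alpha_2\cup\vec{\alpha})$ containing all three curves in its boundary with multiplicity one. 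The paper handles this by induction on $|\partial F|-3$: if $|\partial F|=3$ one has a slide triangle, and otherwise one slides $\alpha_0$ over another boundary component of $F$ to produce $\alpha_0'$, applies the inductive hypothesis to the smaller region $F'$, and then corrects the resolution of $\iso{\alpha_1,\alpha_0',\vec{\alpha}}\to\iso{\alpha_2,\alpha_0',\vec{\alpha}}$ back to one of $\iso{\alpha_1,\alpha_0,\vec{\alpha}}\to\iso{\alpha_2,\alpha_0,\vec{\alpha}}$ using one slide pentagon and a row of Type-I/Type-II squares. This induction is where the pentagon $2$-cells of Definition \ref{def:hslide}(6) are actually needed, and it is absent from your outline; without it the claim for Type II triangles is unproved. (Minor point: in the self-separated square case the grid consists of Type-I \emph{or} Type-II squares, since the two curves being slid may both slide over the same curve of $\vec{\alpha}$ from opposite sides.)
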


\begin{proof}
First, we show that if we convert edges $\be_0$, $\be_1$ and $\be_2$ of a triangle $\Delta$ in $X_2(C(\bdel))$ into minimal resolutions in $Y_1(C(\bdel))$, we obtain a closed edge-path that is null-homotopic in $Y_2(C(\bdel))$. Let $\bv_i$ be the vertex of $\Delta$ opposite the edge $\be_i$. We distinguish two cases based on the type of the triangle $\Delta$. 

Suppose the triangle $\Delta$ is of Type I, with vertices $\bv_i = \iso{ \alpha_i,\vec{\alpha}}$ for $i\in\mathbb{Z}_3$. By Proposition \ref{prop:F}, for each $i\in\mathbb{Z}_3$, there exists a connected component $F_i$ of the surface $\Sigma\setminus (\alpha_{i-1}\cup\alpha_{i+1}\cup \vec{\alpha})$ such that $g(F_i) = 0$, $\partial F_i\cap\partial \Sigma = \varnothing$, and $\alpha_{i-1}\cup\alpha_{i+1} \subset \partial F_i$, each with multiplicity one. Suppose \( F_i \cap \alpha_i = \varnothing \) for each \( i \in \mathbb{Z}_3 \). Then, the subsurfaces \( F_0, F_1, \) and \( F_2 \) are distinct connected components of  
\(
\Sigma \setminus (\alpha_0 \cup \alpha_1 \cup \alpha_2 \cup \vec{\alpha}),
\)
and their union  
\(
F := F_0 \cup F_1 \cup F_2
\)
forms a connected component of \( \Sigma \setminus \vec{\alpha} \) with \( \partial F \cap \partial \Sigma = \varnothing \). However, since \( \vec{\alpha} \) is a pre-cut system, every connected component of \( \Sigma \setminus \vec{\alpha} \) must intersect \( \partial \Sigma \), which contradicts our assumption. Therefore, \( \alpha_i \subset F_i \), and
\[
F_i = F_{i-1} \cup F_{i+1}
\]
for some \( i \in \mathbb{Z}_3 \).

Choose a regular pants decomposition \( \cP_{i-1} \) for the subsurface \( F_{i-1} \) and \( \cP_{i+1} \) for the subsurface \( F_{i+1} \). Then, convert \( \be_{i-1} \) into the minimal resolution \( \bp_{i-1} \) induced by \( \cP_{i-1} \) and similarly convert \( \be_{i+1} \) into the minimal resolution \( \bp_{i+1} \) induced by \( \cP_{i+1} \). Since the union \( \cP_{i-1} \cup \cP_{i+1} \) forms a regular pants decomposition \( \cP_i \) of \( F_i = F_{i-1} \cup F_{i+1} \), we can convert the edge \( \be_i \) into the concatenation of \( \bp_{i-1} \) and \( \bp_{i+1} \), which is the minimal resolution $\bp_i$ induced by \( \cP_i \).  By construction, the closed edge-path formed by $\bp_{i-1}$, $\bp_{i}$ and $\bp_{i+1}$ is null-homotopic in \(Y_2(C(\bdel))\). By Theorem \ref{thm:min}, any two minimal resolutions are homotopic in \(Y_2(C(\bdel))\), so it suffices to pick any particular one. (Compare \cite[Proposition A.5]{JTZ}.)

Next, suppose the triangle $\Delta$ is of Type II, with vertices $\bv_i=\iso{ \alpha_{i-1},\alpha_{i+1},\vec{\alpha}}$ for $i\in \mathbb{Z}_3$. Since $\iso{ \alpha_0, \alpha_2, \vec{\alpha}}$ and $\iso{ \alpha_1, \alpha_2, \vec{\alpha}}$ are both cut-systems of $C(\bdel)$, by Proposition \ref{prop:F}, there exists a connected component $F$ of the surface $\Sigma\setminus (\alpha_{0}\cup\alpha_{1}\cup \alpha_2\cup\vec{\alpha})$ such that $g(F)$=0, $\partial F \cap \partial \Sigma = \varnothing$, and $\partial F$ contains $\alpha_0$ and $\alpha_1$, each with multiplicity one. Since \( \iso{ \alpha_0, \alpha_1, \vec{\alpha} } \) is also a cut-system of \( C(\bdel) \), every connected component of \( \Sigma \setminus (\alpha_0 \cup \alpha_1 \cup \vec{\alpha}) \) must intersect \( \partial \Sigma \). Moreover, the surface \( \Sigma \setminus (\alpha_0 \cup \alpha_1 \cup \vec{\alpha}) \) can be obtained from \( \Sigma \setminus (\alpha_0 \cup \alpha_1 \cup \alpha_2 \cup \vec{\alpha}) \) by gluing along two copies of \( \alpha_2 \). If \( \partial F \) does not contain \( \alpha_2 \) with multiplicity one, then \( F \) induces a connected component \( F' \) of \( \Sigma \setminus (\alpha_0 \cup \alpha_1 \cup \vec{\alpha}) \) such that \( \partial F' \cap \partial \Sigma = \varnothing \), contradicting the fact that every component must intersect \( \partial \Sigma \). Thus, \( \alpha_i \) must appear in \( \partial F \) with multiplicity one for each \( i \in \mathbb{Z}_3 \). 

If there are no other components of $\partial F$, the cut-systems $\iso{ \alpha_{0},\alpha_{1},\vec{\alpha}}$,$\iso{ \alpha_{1},\alpha_{2},\vec{\alpha}}$ and $\iso{ \alpha_{0},\alpha_{2},\vec{\alpha}}$ form a slide triangle. Otherwise, we prove the desired result by induction on $k=|\partial F|-3$. Let \( \alpha_0' \) be the curve obtained from \( \alpha_0 \) by sliding it over one of the other \( k \) boundary components of \( \partial F \). Let \( F' \) denote the surface obtained by removing from \( F \) the pair of pants determined by this handleslide. See Figure \ref{gloves}.

\begin{figure}[h]
{
   \fontsize{11pt}{11pt}\selectfont
   \def\svgwidth{4.3in}
   \begin{center}
   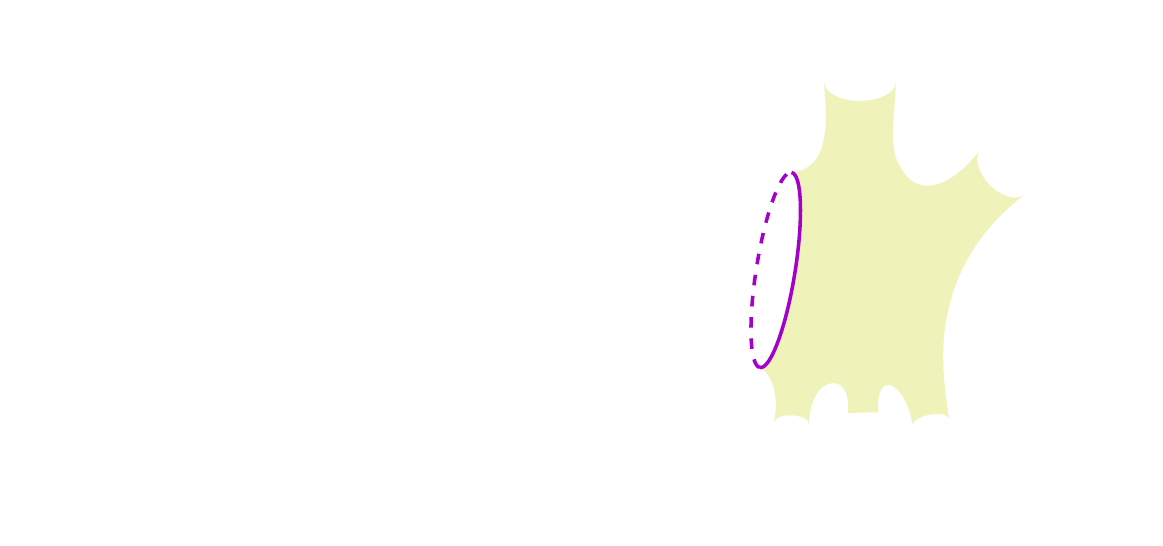
   \end{center}
   \caption{}
   \label{gloves}
}
\end{figure}

By induction, the triangular loop  
\[
\Delta' = \iso{ \alpha_{0}', \alpha_{1}, \vec{\alpha}} \rightarrow \iso{ \alpha_{1}, \alpha_{2}, \vec{\alpha}} \rightarrow \iso{ \alpha_{0}', \alpha_{2}, \vec{\alpha}} \rightarrow \iso{ \alpha_{0}', \alpha_{1}, \vec{\alpha}}
\]
can be decomposed into a sum of 2-cells in \( Y_2(C(\bdel)) \), such that the edges of \( \Delta' \) are resolved into minimal resolutions (induced by pants decompositions of \( F' \)); see Figure \ref{decomp}. Since all minimal resolutions of the edge 
\[ \be = \iso{ \alpha_{1}, \alpha_{0}', \vec{\alpha}} \rightarrow \iso{ \alpha_{2}, \alpha_{0}', \vec{\alpha}} \]
are homotopic in \( Y_2(C(\bdel)) \) (cf. Theorem \ref{thm:min}), we may assume that the first edge in the minimal resolution \( \bp_\be \) of \( \be \) is a handleslide of \( \alpha_1 \) over \( \alpha_0' \). Using a pentagon and a row of Type-I or Type-II squares, we can homotope the edge-path \( \bp_\be \) in \( Y_2(C(\bdel)) \) to a minimal resolution of \( \iso{ \alpha_{1}, \alpha_{0}, \vec{\alpha}} \rightarrow \iso{ \alpha_{2}, \alpha_{0}, \vec{\alpha}} \), as shown in Figure \ref{pinkpan}. Gluing Figure \ref{decomp} and Figure \ref{pinkpan} along $\bp_\be$, we get a decomposition of $\Delta$ in \( Y_2(C(\bdel)) \) (cf. Figure 64 in \cite{JTZ}).

\begin{figure}[h]
{
   \fontsize{11pt}{11pt}\selectfont
   \def\svgwidth{4.0in}
   \begin{center}
   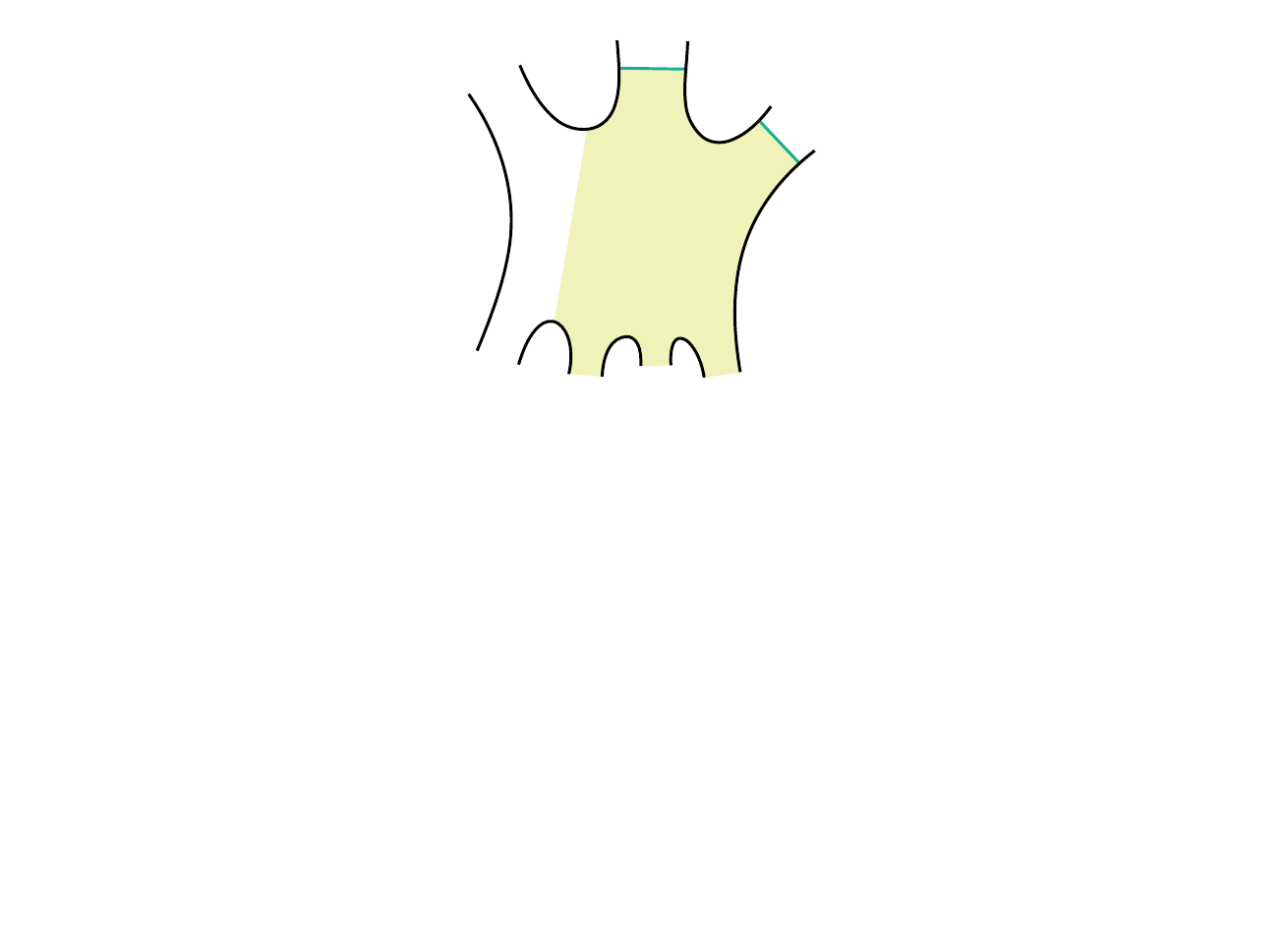
   \end{center}
   \caption{}
   \label{decomp}
}
\end{figure}

\begin{figure}[h]
{
   \fontsize{11pt}{11pt}\selectfont
   \def\svgwidth{6.0in}
   \begin{center}
   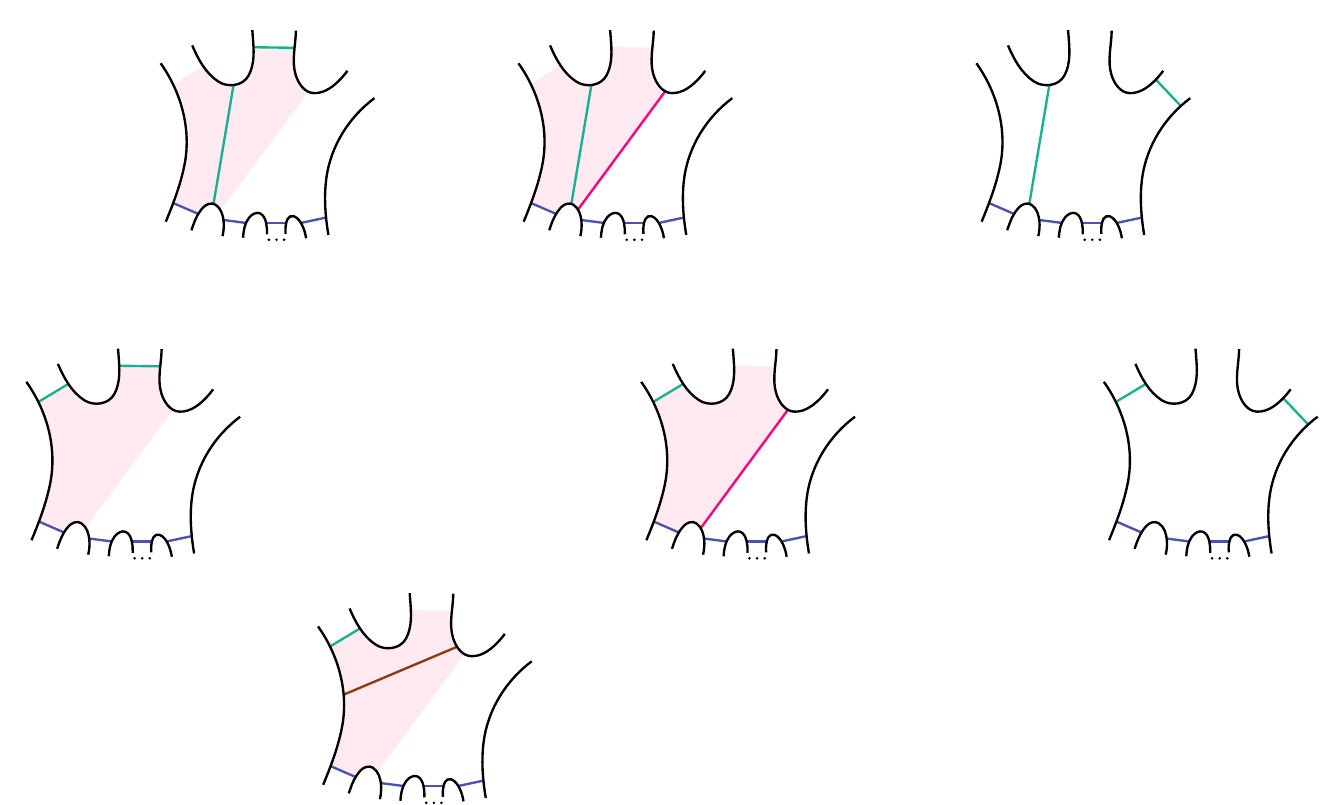
   \end{center}
   \caption{}
   \label{pinkpan}
}
\end{figure}

Secondly, we show that we converting edges of a self-separated square \(\Theta\) in \(X_2(C(\bdel))\) into minimal resolutions in \(Y_1(C(\bdel))\) results in a closed edge-path that is null-homotopic in \(Y_2(C(\bdel))\). Let the square \(\Theta\) be \(\iso{ \alpha_0, \beta_0, \vec{\alpha} } \rightarrow \iso{ \alpha_0, \beta_1, \vec{\alpha} } \rightarrow \iso{ \alpha_1, \beta_1, \vec{\alpha} } \rightarrow \iso{ \alpha_1, \beta_0, \vec{\alpha} } \rightarrow \iso{ \alpha_0, \beta_0, \vec{\alpha} }.\) Since $\Theta$ is self-separated, there exists two distinct connected components \(F_{\alpha}, F_{\beta}\subset\Sigma \setminus \vec{\alpha}\) such that \(\alpha_0, \alpha_1 \subset F_{\alpha}\) and \(\beta_0,\beta_1 \subset F_{\beta}\). Pick a sequence of handleslides \(\alpha(n)\) from \(\alpha_0\) to \(\alpha_1\) within \(F_{\alpha}\), and another sequence of handleslides \(\beta(m)\) from \(\beta_0\) to \(\beta_1\) within \(F_{\beta}\). Now, we decompose each edge of \(\Theta\) into minimal reolutions by interpolations using \(\alpha(n)\) and \(\beta(m)\), and then decompose this resolution of \(\Theta\)  into \(m \times n\) squares of either Type-I or Type-II in \(Y_2(C(\bdel))\). Again, by Theorem \ref{thm:min}, it suffices to show that this particular resolution is null-homotopic in  \(Y_2(C(\bdel))\).

Finally, we show that any closed edge-path \( \bp \) in \( Y_2(C(\bdel)) \) is null-homotopic as follows. Since \( X_2(C(\bdel)) \) is simply connected, we can decompose \( \bp \) into a sum of 2-cells in \( X_2(C(\bdel)) \). We then replace each edge with its minimal resolution. For each 2-cell, the resulting closed edge-path formed by these minimal resolutions is null-homotopic in \( Y_2(C(\bdel)) \). It follows that \( \bp \) is a sum of 2-cells in \( Y_2(C(\bdel)) \), and hence null-homotopic (see Figure \ref{sea_leaf}).
\end{proof}

\section{tight Heegaard invariants for sutured manifolds}\label{strong}
A \emph{sutured diagram} is a triple \((\Sigma, \bal, \bbe)\), where \(\Sigma\) is a compact oriented surface with boundary, and \(\bal\) and \(\bbe\) are two attaching sets on \(\Sigma\). Each sutured diagram \((\Sigma, \bal, \bbe)\) determines a \emph{sutured manifold} \((M, \gamma)\) by gluing the sutured compression bodies \(C(\bal)\) and \(C(\bbe)\) along \(\Sigma\),
\[
(M, \gamma) = \left(-C(\bal) \cup_{\Sigma} C(\bbe), \partial \Sigma \times [-1,1]\right),
\]
with sutures \(s(\gamma) = \partial \Sigma \times \{0\}\). A \emph{balanced sutured manifold} is a proper sutured manifold \((M, \gamma)\) such that \(\chi(R_{+}(\gamma)) = \chi(R_{-}(\gamma))\). Equivalently,  by \cite[Proposition 2.9]{J}, it is a proper sutured manifold that has a sutured diagram \((\Sigma, \bal, \bbe)\) with \(|\bal| = |\bbe|\).

A \emph{sutured isotopy diagram} is a triple \((\Sigma, \iso{ \bal }, \iso{ \bbe })\), where \((\Sigma, \bal, \bbe)\) is a sutured diagram (cf. \cite[Definition 2.13]{JTZ}). If \(\bal'\) and \(\bbe'\) are isotopic to \(\bal\) and \(\bbe\), respectively, then the sutured diagrams \((\Sigma, \bal, \bbe)\) and \((\Sigma, \bal', \bbe')\) define diffeomorphic sutured manifolds (relative to \(\Sigma\)). Therefore, we denote by \( S(H) \) the diffeomorphism type of the sutured manifold associated to the sutured isotopy diagram \( H \).

Following the convention in \cite[Appendix A]{JTZ}, we use graphs of sutured isotopy diagrams to define strong Heegaard invariants. First, we recall relations between sutured isotopy diagrams as follows. 

\begin{definition}[cf. Definition 2.17 in \cite{JTZ}]
Let \(\left(\Sigma, \iso{\bal}, \iso{\bbe}\right)\) and \(\left(\Sigma', \iso{\bal'}, \iso{\bbe'}\right)\) be sutured isotopy diagrams. We say they are \emph{\(\alpha\)-equivalent} if \(\Sigma = \Sigma'\), \(\iso{\bbe} = \iso{\bbe'}\), and \(\iso{\bal} \sim \iso{\bal'}\). Similarly, they are \emph{\(\beta\)-equivalent} if \(\Sigma = \Sigma'\), \(\iso{\bal} = \iso{\bal'}\), and \(\iso{\bbe} \sim \iso{\bbe'}\).
\end{definition}

\begin{definition}[cf. Definition 2.18 in \cite{JTZ}]
The sutured diagram $\left(\Sigma_2, \bal_2, \bbe_2\right)$ is obtained from the diagram $\left(\Sigma_1, \bal_1, \bbe_1\right)$ by a \emph{stabilization} if
\begin{itemize}
\item there is a disk $D \subset \Sigma_1$ and a punctured torus $T \subset \Sigma_2$ such that
$$
\Sigma_1 \backslash D=\Sigma_2 \backslash T,
$$
\item $\bal_1=\bal_2 \cap\left(\Sigma_2 \backslash T\right)$ and $\bbe_1=\bbe_2 \cap\left(\Sigma_2 \backslash T\right)$,
\item $\bal_2 \cap T$ and $\bbe_2 \cap T$ are simple closed curves that intersect each other transversely in a single point.
\end{itemize}
In this case, we also say that $\left(\Sigma_1, \bal_1, \bbe_1\right)$ is obtained from $\left(\Sigma_2, \bal_2, \bbe_2\right)$ by a \emph{destabilization}. For an illustration, see Figure 3 in \cite{JTZ}. 

Let $H_1$ and $H_2$ be sutured isotopy diagrams. We say that $H_2$ is obtained from $H_1$ by a \emph{(de)stabilization} if they have representatives  $\left(\Sigma_2, \bal_2, \bbe_2\right)$ and $\left(\Sigma_1, \bal_1, \bbe_1\right)$, respectively, such that $\left(\Sigma_2, \bal_2, \bbe_2\right)$ is obtained from $\left(\Sigma_1, \bal_1, \bbe_1\right)$ by a (de)stabilization.
\end{definition}

Let \(\cG\) be the graph whose vertices \(|\cG|\) consist of sutured isotopy diagrams. The edges \(\cG(H_1, H_2)\) connecting isotopy diagrams \(H_1, H_2 \in |\cG|\) are the union of four sets:
\[
\cG(H_1, H_2) = \cG_\alpha(H_1, H_2) \cup \cG_\beta(H_1, H_2) \cup \cG_{\text{stab}}(H_1, H_2) \cup \cG_{\text{diff}}(H_1, H_2).
\]
The set \(\cG_\alpha(H_1, H_2)\) (resp. \(\cG_\beta(H_1, H_2)\)) consists of a single arrow if \(H_1\) and \(H_2\) are \(\alpha\)-equivalent (resp. \(\beta\)-equivalent), and is empty otherwise. The set \(\cG_{\text{stab}}(H_1, H_2)\) consists of a single arrow if \(H_2\) is obtained from \(H_1\) by a stabilization or a destabilization, and is empty otherwise. Finally, \(\cG_{\text{diff}}(H_1, H_2)\) consists of all diffeomorphisms from \(H_1\) to \(H_2\). Let \(\cG'\) be the graph defined similarly to \(\cG\) with \(\alpha/\beta\)-equivalence replaced by \(\alpha/\beta\)-handleslide. Since every handleslide is an \(\alpha\)-equivalence or a \(\beta\)-equivalence, \(\cG'\) is a subgraph of \(\cG\).

Let \(\cS\) be a set of diffeomorphism types of sutured manifolds, and let \(\mathcal{C}\) be any category. We denote by \(\cG(\cS)\) the full subgraph of \(\cG\) whose vertices are sutured isotopy diagrams \(H\) satisfying \(S(H) \in \cS\). Similarly, \(\cG'(\cS)\) is the full subgraph of \(\cG'\) spanned by sutured isotopy diagrams \(H\) for which \(S(H) \in \cS\). 

A \emph{weak Heegaard invariant} of \(\cS\) is a morphism of graphs $F: \cG(\cS) \rightarrow \mathcal{C}$ such that, for every arrow $e$ of $\cG(\cS)$, the image $F(e)$ is an isomorphism (cf. \cite[Definition 2.24]{JTZ}). A \emph{strong Heegaard invariant} of \( \cS \) is a weak Heegaard invariant \( F: \cG(\cS) \rightarrow \mathcal{C} \) that satisfies the following axioms: functoriality, commutativity, continuity, and handleswap invariance (cf. \cite[Definition 2.32]{JTZ}).

\begin{definition}
A \emph{loose Heegaard invariant} of \( \cS \) is a morphism of graphs $F:\cG'(\cS)\rightarrow \mathcal{C}$ such that every arrow $e$ of $\cG'(\cS)$, the image $F(e)$ is an isomorphism. A \emph{tight Heegaard invariant} of \( \cS \) is a loose Heegaard invariant $F:\cG'(\cS)\rightarrow \mathcal{C}$ such that
\begin{itemize}
\item $F$ commutes along each handleslide loop from Definition \ref{def:hslide};
\item $F$ satisfies the second half of the functoriality axiom (\cite[Definition 2.32 (1)]{JTZ}), commutativity axiom (\cite[Definition 2.32 (2)]{JTZ}), continuity axiom(\cite[Definition 2.32 (3)]{JTZ}), and handleswap invariance axiom (\cite[Definition 2.32 (4)]{JTZ}), replacing “$\alpha$/$\beta$-equivalence” with “$\alpha$/$\beta$-handleslide”;
\item $F$ commutes along every stabilization slide (see \cite[Definition 7.7]{JTZ}).
\end{itemize}
\end{definition}

\begin{theorem}[Generalization of Theorem A.6 in \cite{JTZ}]\label{thm:mom}
Every loose Heegaard invariant $F': \cG'(\cS) \rightarrow \mathcal{C}$ extends to a weak Heegaard invariant $F: \cG(\cS) \rightarrow \mathcal{C}$. If $F'$ is tight, then $F'$ uniquely extends to a strong Heegaard invariant $F: \cG(\cS) \rightarrow \mathcal{C}$.
\end{theorem}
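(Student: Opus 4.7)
The plan is to extend $F'$ to the edges in $\cG(\cS) \setminus \cG'(\cS)$ --- namely those $\alpha$- and $\beta$-equivalences that are not themselves single handleslides --- by decomposing each such equivalence into a sequence of handleslides, applying $F'$ edge-by-edge, and composing; on stabilization and diffeomorphism edges we simply keep $F := F'$. For the weak extension any choice of handleslide decomposition will suffice; for the tight extension the recipe gives a well-defined $F$ because Theorem \ref{thm:simpc} forces every two decompositions to produce the same composite after $F'$ is applied.

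For the weak extension, let $e \colon (\Sigma, \iso{\bal_1}, \iso{\bbe}) \to (\Sigma, \iso{\bal_2}, \iso{\bbe})$ be an $\alpha$-equivalence in $\cG(\cS)$. Since $\iso{\bal_1} \sim \iso{\bal_2}$, Proposition \ref{prop:y0} furnishes an edge-path $\bp_e$ in $Y_1(C(\bal_1))$ from $\iso{\bal_1}$ to $\iso{\bal_2}$. Fix one such $\bp_e$ for every $\alpha/\beta$-equivalence edge of $\cG(\cS)$, and define $F(e)$ to be the composite of the isomorphisms that $F'$ assigns along the successive handleslide edges of $\bp_e$. Each $F(e)$ is a composite of isomorphisms, hence itself an isomorphism, and $F$ is a weak Heegaard invariant.

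Now assume $F'$ is tight. If $\bp, \bp'$ are two edge-paths in $Y_1(C(\bal_1))$ joining $\iso{\bal_1}$ to $\iso{\bal_2}$, their concatenation $\bp \ast \overline{\bp'}$ is a closed edge-path, and by Theorem \ref{thm:simpc} it is null-homotopic in $Y_2(C(\bal_1))$; equivalently, it splits as a sum of the six types of $2$-cells from Definition \ref{def:hslide}. The first tightness bullet forces $F'$ to commute around each such loop, so $F'(\bp) = F'(\bp')$, and $F(e)$ is independent of the chosen $\bp_e$. Uniqueness among strong extensions is then automatic: any strong extension of $F'$ must agree with $F'$ on $\cG'(\cS)$, and by functoriality along a handleslide decomposition this determines its value on every $\alpha/\beta$-equivalence edge in $\cG(\cS)$.

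Finally, one must verify that the canonical $F$ satisfies the four strong Heegaard invariant axioms on $\cG(\cS)$. Functoriality for $\alpha/\beta$-equivalences reduces to the fact that concatenation of handleslide paths corresponds to composition in $\mathcal{C}$. The commutativity, continuity, and handleswap-invariance axioms involve squares or loops whose sides are either handleslides (handled directly by the tight axioms), stabilizations and diffeomorphisms (inherited from $F'$), or general $\alpha/\beta$-equivalences, which we subdivide into handleslide paths and then fill in using the available $2$-cells. The main obstacle lies in this last, \emph{mixed} case: after replacing an $\alpha/\beta$-equivalence by a handleslide decomposition one must verify that each resulting sub-square is either a $2$-cell of $Y_2(C(\bdel))$ (commutative by tightness) or a handleslide-stabilization, resp.\ handleslide-diffeomorphism, square, controlled by the stabilization-slide axiom, resp.\ by the functoriality of $F'$ on $\cG'(\cS)$. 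This parallels the strategy of Appendix A of \cite{JTZ}, with Theorems \ref{thm:min} and \ref{thm:simpc} supplying the combinatorial moves needed in the sutured setting.
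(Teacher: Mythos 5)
Your proposal is correct and follows essentially the same route as the paper: extend $F'$ over $\alpha/\beta$-equivalences via a handleslide path supplied by the connectivity of $Y_2(C(\bdel))$, use simple-connectivity (Theorem \ref{thm:simpc}) plus the tightness axiom on handleslide loops to get path-independence and hence uniqueness, and verify the strong axioms by subdividing distinguished rectangles into handleslide-sided sub-squares. The only place the paper is more explicit than you are is the type-(2) (stabilization) rectangle, where the stabilized handleslide sequence may land on a diagram differing by slides over $\alpha_0$, which the paper corrects by appending a row of stabilization slides—exactly the axiom you invoke, so this is a matter of detail rather than of approach.
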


\begin{proof}
For the first part, we need to define \( F(\be) \) only for edges \( \be \) of \( \cG(\cS) \) corresponding to \( \alpha \)-equivalences or \( \beta \)-equivalences. Without loss of generality, suppose \( e \) is an \( \alpha \)-equivalence between sutured isotopy diagrams \( H = (\Sigma, \iso{\bal}, \iso{\bbe}) \) and \( H' = (\Sigma, \iso{\bal'}, \iso{\bbe}) \). Since \( \bal \sim \bal' \), both \( \iso{\bal} \) and \( \iso{\bal'} \) are cut-systems for the sutured compression body \( C(\bal) \). Therefore, by Proposition \ref{prop:y0}, since \( Y_2(C(\bal)) \) is connected, the cut-systems \( \iso{\bal} \) and \( \iso{\bal'} \) are related by a sequence of handleslides on $\Sigma$. Paired with \( \bbe \), this sequence of \( \alpha \)-handleslides gives rise to a sequence of handleslides \( h_1, \dots, h_k \) connecting \( H \) and \( H' \) in \( \cG'(\cS) \). We define \( F(\be) \) as the composite \( F'(h_k) \circ \dots \circ F'(h_1) \).

For the second part, we verify that if $F'$ satisfies the additional conditions, then a weak extension $F$ from the first part uniquely determines a strong Heegaard invariant. By Theorem \ref{thm:simpc}, the 2-complex $Y_2(C(\bal))$ is simply-connected. Since $F'$ commutes along every handleslide loop (i.e., along the boundary of $2$-cells of $Y_2(C(\bal))$, we have that the extension of $F'$ is independent of the choice of path $h_1,\dots, h_k$. Therefore, the extension is unique, and we denote it by \( F \). We now proceed to show that \( F \) is indeed strong.  

Since $F$ satisfies the functorality axiom over $\cG_{\alpha}(\cS)$ and $\cG_{\beta}(\cS)$ by construction, it remains to show that $F$ satisfies the commutativity axiom. As in the proof of \cite[Theorem A.6]{JTZ}, we can decompose each distinguished rectangle in $\cG(\cS)$ into distinguished rectangles in $\cG'(\cS)$. It suffices to consider distinguished rectangles of type (1), (2) and (3) (cf. Definition 2.29 in \cite{JTZ}). 

\begin{figure}[h]
{
   \fontsize{12pt}{11pt}\selectfont
   \def\svgwidth{3.6in}
   \begin{center}
   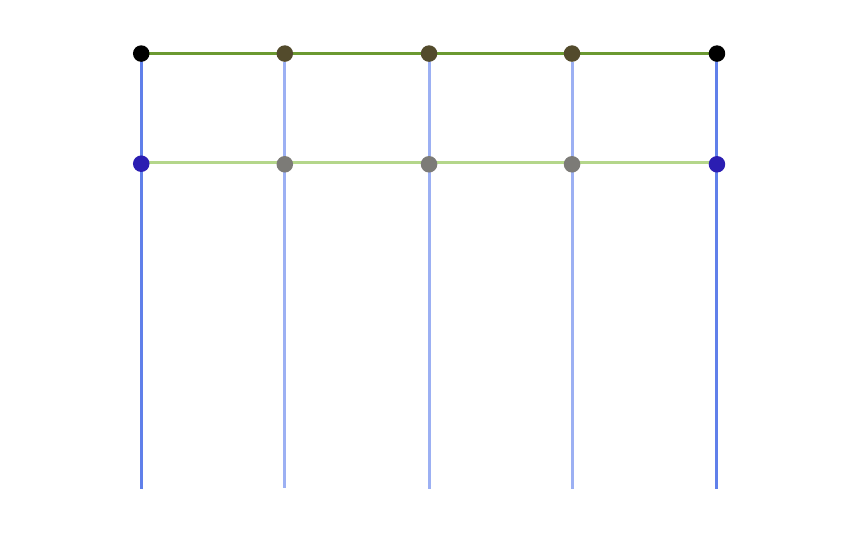
   \end{center}
   \caption{Decomposing a distinguished rectangle of type (1)}
   \label{grid}
}
\end{figure}

For type (1), each distinguished rectangle can be decompose into a grid of smaller rectangles with sides $\alpha$/$\beta$-handleslides as in Figure \ref{grid}. Similarly, for distinguished rectangles of type (3), we can decompose it into a row of smaller rectangles with sides consisting of handleslides and diffeomorphism of $\Sigma$ as in Figure \ref{row}. 

\begin{figure}[h]
{
   \fontsize{12pt}{11pt}\selectfont
   \def\svgwidth{3.2in}
   \begin{center}
   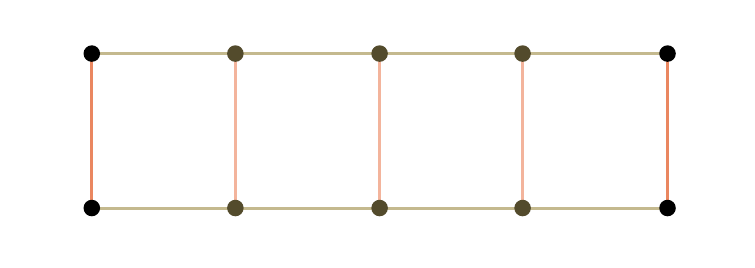
   \end{center}
   \caption{Decomposing a distinguished rectangle of type (3)}
   \label{row}
}
\end{figure}

We now decompose distinguished rectangles of type (2), whose vertical sides are stabilizations and whose horizontal sides either both $\alpha$-equivalences or both $\beta$-equivalences. Without loss of generality, we assume that the horizontal sides are $\alpha$-equivalences. Let $h_1,\dots,h_k$ be a handleslide resolution of the $\alpha$-equivalence on the destabilized side. Then, we stabilize the handleslide sequence above, and obtain a handleslide sequence $h_1',\dots,h_k'$ from $(\Sigma, \bal, \bbe) \# (T, \alpha_0, \beta_0)$ to $(\Sigma, \overline{\bal}, \bbe) \# (T, \alpha_0, \beta_0)$, where $T$ is a torus and $\alpha_0$ and $\beta_0$ intersects transversely at a single point. Since isotopies on $\Sigma$ may pass over the connected-sum disk of the stabilization, the sutured diagram $(\Sigma', \overline{\bal}', \bbe')$ in the lower-right corner may differ from $(\Sigma, \overline{\bal}, \bbe) \# (T, \alpha_0, \beta_0)$ by a sequence of handleslides over $\alpha_0$. To correct this, we attach a row of stabilization slides to the row of distinguished rectangles in $\cG'(\cS)$ of type (2), whose horizontal sides are $h_i$ and $h_i'$ (see Figure \ref{stab_slides}).

\begin{figure}[h]
{
   \fontsize{12pt}{11pt}\selectfont
   \def\svgwidth{4.4in}
   \begin{center}
   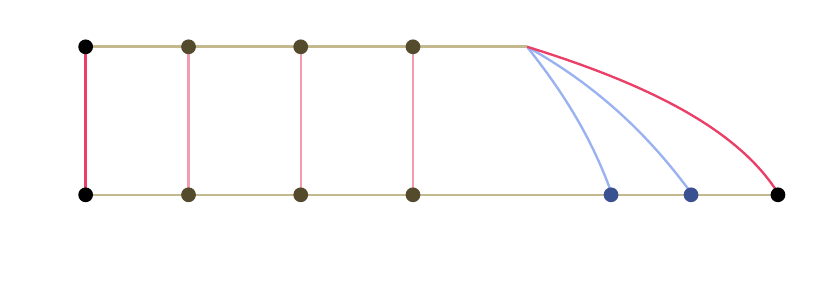
   \end{center}
   \caption{Decomposing a distinguished rectangle of type (2)}
   \label{stab_slides}
}
\end{figure}
\end{proof}

\begin{corollary} Let $\cS=\cS_{\mathrm{bal}}$ be the set of diffeomorphism types of balanced sutured manifolds, and let $\mathcal{C}$ be a category.  Then every loose Heegaard invariant $F': \cG'(\cS) \rightarrow \mathcal{C}$ extends to a weak Heegaard invariant $F: \cG(\cS) \rightarrow \mathcal{C}$. Moreover, if $F'$ is a tight Heegaard invariant, then $F'$ uniquely extends to a strong Heegaard invariant $F: \cG(\cS) \rightarrow \mathcal{C}$.
\end{corollary}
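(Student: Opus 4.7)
The plan is to deduce this corollary as a direct specialization of Theorem \ref{thm:mom} to the set $\cS = \cS_{\mathrm{bal}}$. Since $\cS_{\mathrm{bal}}$ is by definition a set of diffeomorphism types of sutured manifolds, both $\cG(\cS_{\mathrm{bal}})$ and $\cG'(\cS_{\mathrm{bal}})$ are well-defined as full subgraphs of $\cG$ and $\cG'$, respectively. The main theorem was stated and proved for an arbitrary set $\cS$ of diffeomorphism types precisely so that it could be applied to various classes of interest, and balanced sutured manifolds are one such class.

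Concretely, the first claim---that every loose Heegaard invariant $F' \colon \cG'(\cS_{\mathrm{bal}}) \to \mathcal{C}$ extends to a weak Heegaard invariant on $\cG(\cS_{\mathrm{bal}})$---follows by taking $\cS = \cS_{\mathrm{bal}}$ in the first assertion of Theorem \ref{thm:mom}. The second claim, that a tight Heegaard invariant extends uniquely to a strong one, follows identically from the second assertion.

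There is no genuine obstacle: the corollary is a relabeling of the statement the main theorem already provides. The reason for isolating this case is that balanced sutured manifolds are precisely the setting in which sutured Floer homology is defined (cf.\ \cite[Proposition 2.9]{J}); moreover, by that same proposition, every vertex of $\cG(\cS_{\mathrm{bal}})$ is represented by a sutured diagram $(\Sigma, \bal, \bbe)$ with $|\bal| = |\bbe|$, so that the complexes $Y_2(C(\bal))$ and $Y_2(C(\bbe))$ constructed in Section \ref{y2}---whose simple connectivity drives the proof of Theorem \ref{thm:mom}---are available on both sides of the splitting. Thus the specialization is completely uniform, and no further argument is required beyond invoking Theorem \ref{thm:mom}.
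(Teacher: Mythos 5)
Your proposal is correct and matches the paper's approach exactly: the paper states this corollary with no separate proof, treating it as an immediate specialization of Theorem \ref{thm:mom} to the case $\cS = \cS_{\mathrm{bal}}$, which is precisely what you do. The extra remarks about $|\bal|=|\bbe|$ are harmless but not needed, since Theorem \ref{thm:mom} already applies to an arbitrary set of diffeomorphism types of sutured manifolds without any balancing hypothesis.
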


Each multi-based 3-manifold defines a balanced sutured manifold if we assign an oriented tangent 2-plane to each basepoint (cf. \cite[Definition 2.4]{JTZ}). Therefore, we can apply Theorem \ref{thm:mom} to the set of diffeomorphism types of balanced sutured manifolds arising from multi-based 3-manifolds, and obtain the following corollary for classical (i.e., non-sutured) Heegaard invariants. Here, we denote by \( [(M, \gamma)] \) the diffeomorphism type of a sutured manifold \( (M, \gamma) \).

\begin{corollary} Let $\cS=\cS_{\mathrm{man*s}}$ be the set of all \(\left[Y(\bz, \bV)\right]\), where $Y$ is a closed oriented $3$-manifold with basepoints $\bz$, and $\bV$ assigns an oriented tangent $2$-plane for each basepoint. Let $\mathcal{C}$ be a category. Then every loose Heegaard invariant $F': \cG'(\cS) \rightarrow \mathcal{C}$ extends to a weak Heegaard invariant $F: \cG(\cS) \rightarrow \mathcal{C}$.
If, furthermore, $F'$ is a tight Heegaard invariant, then $F'$ uniquely extends to a strong Heegaard invariant $F: \cG(\cS) \rightarrow \mathcal{C}$.
\end{corollary}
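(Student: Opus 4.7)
The plan is to deduce this corollary as an immediate instance of Theorem \ref{thm:mom}. The only point requiring attention is to confirm that $\cS_{\mathrm{man*s}}$ is legitimately a set of diffeomorphism types of (balanced) sutured manifolds, after which the theorem applies with $\cS = \cS_{\mathrm{man*s}}$ and the conclusion is automatic.

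First I would recall the sutured-manifold construction associated to a multi-pointed $3$-manifold $(Y, \bz, \bV)$ from \cite[Definition 2.4]{JTZ}. Briefly, one removes a small open ball around each basepoint $z_i \in \bz$, and on each resulting boundary $2$-sphere one places an equatorial pair of sutures whose oriented position is determined by the tangent $2$-plane $V_i$ at $z_i$. The outcome $Y(\bz, \bV)$ is a balanced sutured manifold, and its diffeomorphism type $[Y(\bz, \bV)]$ depends only on $(Y, \bz, \bV)$ up to the natural equivalence of such data. Thus $\cS_{\mathrm{man*s}}$ sits inside the set of diffeomorphism types of (balanced) sutured manifolds, and in particular is a valid input for Theorem \ref{thm:mom}.

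Given this setup, the argument is a one-line application: Theorem \ref{thm:mom} is stated for an arbitrary set $\cS$ of diffeomorphism types of sutured manifolds, so specializing to $\cS = \cS_{\mathrm{man*s}}$ immediately yields that every loose Heegaard invariant $F' \colon \cG'(\cS_{\mathrm{man*s}}) \to \mathcal{C}$ extends to a weak Heegaard invariant on $\cG(\cS_{\mathrm{man*s}})$, and if $F'$ is tight then this extension is a unique strong Heegaard invariant. No new combinatorics, $2$-complex analysis, or handleslide decomposition is required beyond what is already packaged into Theorem \ref{thm:mom}; in particular the simple connectivity of $Y_2(C(\bdel))$ and the minimal-resolution machinery from Sections \ref{x2}--\ref{y2} are consumed entirely at the level of the theorem.

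The main (and essentially only) obstacle is verifying well-definedness of the assignment $(Y, \bz, \bV) \mapsto [Y(\bz, \bV)]$, i.e., that the choice of ball neighborhoods around the basepoints, together with the realization of the tangent $2$-planes as oriented suture configurations on the boundary spheres, does not affect the ambient sutured-manifold diffeomorphism type. This is routine and is already carried out in \cite[Section 2]{JTZ}, so I would simply cite it rather than reprove it, and then invoke Theorem \ref{thm:mom} to conclude.
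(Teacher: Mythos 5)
Your proposal matches the paper's argument exactly: the paper likewise observes that each multi-based $3$-manifold with a choice of oriented tangent $2$-plane at each basepoint determines a balanced sutured manifold via \cite[Definition 2.4]{JTZ}, and then applies Theorem \ref{thm:mom} directly to the resulting set of diffeomorphism types. No further verification is carried out in the paper, so your one-line specialization is the intended proof.
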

\begin{remark}
Let $\mathrm{\bf Man}_\mathrm{*s}$ be the category of multi-based $3$-manifolds, and let $\mathrm{\bf Man}_{\mathrm{*s},V}$ be the category of multi-based $3$-manifolds with a choice of oriented tangent $2$-plane at each basepoint.
Since each fiber of the forgetful functor $\mathrm{\bf Man}_{\mathrm{*s},V}\rightarrow\mathrm{\bf Man}_\mathrm{*s}$ is a {product of spheres}, which is simply-connected, there is no monodromy along any loop of oriented tangent $2$-planes (cf. \cite[Lemma 2.45]{JTZ} and the proof of \cite[Theorem 1.5]{JTZ}).
\end{remark}
A \emph{multi-based oriented link} \( \mathbb{L} = (L, \bz, \bw) \) inside a closed, oriented 3-manifold \( Y \) consists of an oriented \( \ell \)-component link \( L \subset Y \), together with two disjoint collections of basepoints \( \bz = (z_1, \dots, z_k) \) and \( \bw = (w_1, \dots, w_k) \) placed on \( L \). Each component of \( L \) is required to contain at least one \(\bz \)-basepoint and one \( \bw \)-basepoint, and the basepoints alternate between \( \bz \) and \( \bw \) as one traverses each component according to its orientation. In particular, we have \( k \ge \ell \).

Similar to the case of multi-based 3-manifolds, a multi-based oriented link defines a balanced sutured manifold by adding one suture circle for each basepoint to the torus boundary of the link complement (cf. \cite[Definition 2.5]{JTZ}). Thus, we can apply Theorem \ref{thm:mom} to the set of diffeomorphism types corresponding to multi-based links and obtain the following corollary.

\begin{corollary} Let $\cS=\cS_{\mathrm{link*s}}$ be the set of all \(\left[Y(\mathbb{L})\right]\), where $Y$ is a closed oriented $3$-manifold and \(\mathbb{L}=(L, \bz, \bw)\) is a multi-based oriented link in \( Y \). Let $\mathcal{C}$ be a category. Then every loose Heegaard invariant $F': \cG'(\cS) \rightarrow \mathcal{C}$ extends to a weak Heegaard invariant $F: \cG(\cS) \rightarrow \mathcal{C}$.
If, furthermore, $F'$ is a tight Heegaard invariant, then $F'$ uniquely extends to a strong Heegaard invariant $F: \cG(\cS) \rightarrow \mathcal{C}$.
\end{corollary}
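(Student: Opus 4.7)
The plan is to invoke Theorem \ref{thm:mom} directly with $\cS = \cS_{\mathrm{link*s}}$. Since Theorem \ref{thm:mom} is stated for an arbitrary set of diffeomorphism types of sutured manifolds, the only thing that really needs verification is that each symbol $[Y(\mathbb{L})]$ is the diffeomorphism type of a genuine (balanced) sutured manifold, so that $\cS_{\mathrm{link*s}}$ is a well-defined subset to which the theorem applies.

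To do this, I would first recall the construction of $Y(\mathbb{L})$ from \cite[Definition 2.5]{JTZ}. Given a multi-based oriented link $\mathbb{L} = (L,\bz,\bw) \subset Y$, one removes an open tubular neighborhood $\nu(L)$ of $L$ to obtain a compact $3$-manifold with torus boundary components, one torus per component of $L$. On each torus one places a meridional suture through every basepoint and decorates the complementary annuli as $R_{\pm}(\gamma)$ according to the prescribed alternation of $\bz$- and $\bw$-basepoints along the oriented component. Because the basepoints alternate between $\bz$ and $\bw$ and each component of $L$ contains at least one marking of each type, the number of $R_+$ annuli on each boundary torus equals the number of $R_-$ annuli; hence $\chi(R_+(\gamma)) = \chi(R_-(\gamma)) = 0$ and $Y(\mathbb{L})$ is balanced. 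In particular $\cS_{\mathrm{link*s}} \subset \cS_{\mathrm{bal}}$, so the full subgraphs $\cG(\cS_{\mathrm{link*s}}) \subset \cG$ and $\cG'(\cS_{\mathrm{link*s}}) \subset \cG'$ are well defined.

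With this identification in hand, both assertions of the corollary follow by direct application of Theorem \ref{thm:mom} to $\cS = \cS_{\mathrm{link*s}}$: a loose Heegaard invariant on $\cG'(\cS_{\mathrm{link*s}})$ extends to a weak Heegaard invariant on $\cG(\cS_{\mathrm{link*s}})$, and a tight Heegaard invariant extends uniquely to a strong one. Unlike the corollary for $\cS_{\mathrm{man*s}}$, there is no auxiliary structure at each basepoint (such as a choice of oriented tangent $2$-plane) that needs to be collapsed, so no supplementary argument about monodromy over simply-connected fibers of a forgetful functor is required. The main (and essentially only) obstacle is the bookkeeping check that the sutured manifold associated to a multi-based link is always balanced, which is packaged in the construction of \cite[Definition 2.5]{JTZ}; after that, the proof is a one-line invocation of Theorem \ref{thm:mom}.
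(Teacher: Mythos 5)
Your proposal is correct and matches the paper's approach: the paper likewise observes that a multi-based oriented link determines a balanced sutured manifold via \cite[Definition 2.5]{JTZ} (one suture circle per basepoint on the boundary tori) and then applies Theorem \ref{thm:mom} directly to $\cS_{\mathrm{link*s}}$. Your additional remark that no tangent-plane/monodromy argument is needed here, unlike the $\cS_{\mathrm{man*s}}$ case, is also consistent with the paper.
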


\begin{remark}
We can obtain sutured diagrams from multi-based Heegaard diagrams by removing disks around each basepoint that are small enough to be disjoint from \( \bal \) and \( \bbe \) on the Heegaard surface \( \Sigma \).
\end{remark}

\bibliography{compress_body}

\begin{thebibliography}{1}
\providecommand{\url}[1]{\texttt{#1}}
\providecommand{\urlprefix}{URL }
\expandafter\ifx\csname urlstyle\endcsname\relax
  \providecommand{\doi}[1]{doi:\discretionary{}{}{}#1}\else
  \providecommand{\doi}{doi:\discretionary{}{}{}\begingroup
  \urlstyle{rm}\Url}\fi
\providecommand{\eprint}[2][]{\url{#2}}

\bibitem{PMCG}
B.~Farb and D.~Margalit, \emph{A Primer on Mapping Class Groups}, Princeton
  University Press, 2012.

\bibitem{J}
A.~Juh{\'a}sz, \emph{Holomorphic discs and sutured manifolds}, Algebraic \&
  Geometric Topology, \textbf{6}(2006), 1429--1457.

\bibitem{JTZ}
A.~Juh{\'a}sz, D.~Thurston, and I.~Zemke, \emph{Naturality and {M}apping
  {C}lass {G}roups in {H}eegard {F}loer {H}omology}, Memoirs of the American
  Mathematical Society, \textbf{273}(2021), no. 1338, 1--185.

\bibitem{OSknot}
P.~Ozsv\'{a}th and Z.~Szab\'{o}, \emph{Holomorphic disks and knot invariants},
  Adv. Math., \textbf{186}(2004), no.~1, 58--116.

\bibitem{OS26}
P.~Ozsv\'{a}th and Z.~Szab\'{o}, \emph{Holomorphic disks and three-manifold
  invariants: properties and applications}, Ann. of Math. (2),
  \textbf{159}(2004), no.~3, 1159--1245.

\bibitem{OS27}
P.~Ozsv\'{a}th and Z.~Szab\'{o}, \emph{Holomorphic disks and topological
  invariants for closed three-manifolds}, Ann. of Math. (2),
  \textbf{159}(2004), no.~3, 1027--1158.

\bibitem{OSlink}
P.~Ozsv\'{a}th and Z.~Szab\'{o}, \emph{Holomorphic disks, link invariants and
  the multi-variable alexander polynomial}, Algebr. Geom. Topol.,
  \textbf{8}(2008), no.~2, 615--692.

\bibitem{Raknot}
J.~A. Rasmussen, \emph{Floer homology and knot complements}, ProQuest LLC, Ann
  Arbor, MI, thesis (Ph.D.)--Harvard University, 2003.

\bibitem{Wa1998}
B.~Wajnryb, \emph{Mapping class group of a handlebody}, Fundamenta
  Mathematicae, \textbf{158}(1998), no.~3, 195--228.

\end{thebibliography}
\bibliographystyle{custom}
\end{document}